\DeclareMathOperator{\lsc}{LSC}
\DeclareMathOperator{\usc}{USC}
\newcommand{\osc}{\operatorname{osc}}
 \newcommand{\sublim}{\operatornamewithlimits{\longrightarrow}}
\providecommand{\norm}[1]{\lVert#1 \rVert}
\renewcommand{\hat}{\widehat}
\renewcommand{\tilde}{\widetilde}
\numberwithin{equation}{section}
\newtheorem{theorem}{Theorem}[section]
\newtheorem{lemma}[theorem]{Lemma}
\newtheorem{prop}[theorem]{Proposition}
\theoremstyle{definition}
\theoremstyle{remark}\newtheorem{step}{Step}           
\newcommand{\R}{\mathbb{R}}
\newcommand{\N}{\mathbb{N}}
\newcommand{\eps}{\varepsilon}
\newcommand{\vphi}{\varphi}
\newcommand{\ds}{\displaystyle}
\newcommand{\udl}{\underline}
\newcommand{\ol}{\overline}
\newcommand{\s}{\beta}
\author[J.-P. DANIEL]{Jean-Paul Daniel}
\address{UPMC Universit\'e Paris 06, UMR 7598 \\
Laboratoire Jacques-Louis Lions, F-75005, Paris, France}
\email{daniel@ann.jussieu.fr}
\title[Quadratic expansions for fully nonlinear parabolic equations]
{Quadratic expansions and partial regularity for fully nonlinear uniformly parabolic equations}
\keywords{partial regularity, fully nonlinear parabolic equations, viscosity solutions.}
\subjclass[2010]{35B65, 35K55, 35D40, 49N60}
\date{\today}
\begin{document}

\begin{abstract}
For a parabolic equation associated to a uniformly elliptic operator, we obtain a~$W^{3, \eps}$~estimate, which provides a lower bound on the Lebesgue measure of the set on which a viscosity solution has a quadratic expansion. The argument combines parabolic $W^{2,\eps}$~estimates with a comparison principle argument. As an application, we show, assuming the operator is~$C^1$, that a viscosity solution is~$C^{2,\alpha}$ on the complement of a closed set of Hausdorff dimension $\eps$ less than that of the ambient space, where the constant~$\eps>0$ depends only on the dimension and the ellipticity.     
\end{abstract}

\maketitle

\section{Introduction}

\subsection{Motivation and statement}

In this paper, we prove a partial regularity result for viscosity solutions of the uniformly parabolic equation
\begin{equation}\label{eq_para_unif_ell_with_g_0} 
\partial_t u + F(D^2u)= 0 
\quad \mbox{in }\Omega \subseteq \R^d \times (-1,\infty).
\end{equation}
We write $u$ as a function of $(x,t)\in \R^d \times [-1,\infty)$ and $D^2u$ denotes the Hessian of $u$ with respect to the~$x$ variable. The operator $F$ is assumed to be uniformly elliptic and to have uniformly continuous first derivatives. 

Recently, Armstrong, Silvestre and Smart \cite{armstrong_smart_silvestre} obtained a partial regularity result for viscosity solutions of the uniformly elliptic equation
\begin{equation} \label{eq_ell_unif_ell}
 F(D^2u)=0 \quad \text{in } U \subseteq \R^d,
\end{equation}
with the same hypotheses on $F$. Specifically, they proved that, for every $0<\alpha <1$, a viscosity solution of \eqref{eq_ell_unif_ell} is $C^{2,\alpha}$ on the complement of a closed set of Hausdorff dimension strictly less than~$d$.

We extend this result to the parabolic setting by showing that the singular set of a solution of~\eqref{eq_para_unif_ell_with_g_0} has Hausdorff dimension at most $d+1 - \eps$, where the constant~$\eps >0$ depends only on the ellipticity of $F$ and $d$. The hypotheses~\ref{f1} and~\ref{f2} are given in the next section. In this paper, H\"older spaces such as $C^{2,\alpha}$ are to be understood in the parabolic sense (e.g., a  parabolic $C^{2,\alpha}$ function may only be~$C^{1,\alpha/2}$ in the time variable: see Section 2.1 for the precise definitions).

\begin{theorem}\label{partpar_reg_th}
 Assume that $F$ satisfies~\ref{f1} and~\ref{f2}. Let $u\in C(\Omega)$ be a viscosity solution of~\eqref{eq_para_unif_ell_with_g_0} in a domain $\Omega  \subseteq \R^{d+1}$. Then there exist a constant $\eps>0$, depending only on $d$, $\lambda$, $\Lambda$ and a closed subset $\Sigma \subseteq \ol \Omega $ of Hausdorff dimension at most $d +1 - \eps$, such that, for every $0<\alpha<1$, the solution $u$ belongs to $C^{2,\alpha}(\Omega \setminus \Sigma)$. 
\end{theorem}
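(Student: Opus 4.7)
The plan is to adapt the elliptic strategy of Armstrong--Silvestre--Smart to the parabolic setting, with the parabolic $W^{3,\eps}$ estimate announced in the abstract playing the central role. First, I would introduce, for each dyadic scale $r = 2^{-k}$ and each parabolic cylinder $Q_r(x_0,t_0) \subset \Omega$, the notion of a \emph{good point}: a point at which $u$ is approximated, up to error $o(r^2)$ in $L^\infty(Q_r)$, by a caloric polynomial $P_r$ of degree two in $x$ and one in $t$ satisfying $\partial_t P_r + F(D^2 P_r) = 0$, with coefficients bounded by some constant $M$. The parabolic $W^{2,\eps}$ estimate of Wang, applied to $u$ itself, says that outside a set of measure $CM^{-\eps}|Q_1|$, such a touching paraboloid exists at scale $1$.

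The key is to iterate this across scales. At a point where $u$ is touched by $P_r$ at scale $r$, I would set $w(y,s) = r^{-2}(u - P_r)(x_0 + ry, t_0 + r^2 s)$; the $C^1$ regularity of $F$ (hypothesis \ref{f2}) allows me to write $w$ as a viscosity solution of $\partial_s w + \tilde F(D^2 w) = g$ in $Q_1$, with $\tilde F$ uniformly elliptic and $\|g\|_{L^\infty}$ controlled by the modulus of continuity of $DF$ evaluated at $\|D^2 P_r\|$. Re-applying the parabolic $W^{2,\eps}$ estimate to $w$, I obtain that outside a set of relative measure $\theta < 1$ in $Q_r$, $u$ is in turn touched by a finer paraboloid $P_{r/2}$ at scale $r/2$, with coefficients comparable to those of $P_r$. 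Iterating this comparison/rescaling argument $k$ times yields the $W^{3,\eps}$ estimate: the set where the iteration can be continued for all $k$ fills $Q_1$ up to a set of arbitrarily small measure, polynomially in $M$.

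Given the $W^{3,\eps}$ estimate, the proof of Theorem~\ref{partpar_reg_th} reduces to two standard pieces. First, at every point where the iteration survives indefinitely, the tower $(P_{r})_{r \to 0}$ is a Cauchy family in a natural normed sense, and its limit provides a quadratic expansion of $u$ with Hölder-continuous coefficients; a standard Campanato-type characterization in the parabolic geometry then upgrades this to a pointwise $C^{2,\alpha}$ expansion for any $\alpha \in (0,1)$. Second, a parabolic Vitali covering argument converts the measure estimate $|\{P_r \text{ fails at scale } r\}| \leq C\theta^{k}|Q_1|$, valid with $\theta = \theta(d,\lambda,\Lambda) < 1$, into a Hausdorff dimension bound: the singular set $\Sigma$ of points where no such tower exists can be covered by parabolic cylinders of total $(d+1-\eps)$-content less than any prescribed quantity, with $\eps = -\log\theta / \log 2 > 0$.

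The main obstacle I foresee is the iterative comparison step, and in particular the rescaling $w = r^{-2}(u-P_r)$ in parabolic geometry: one has to track how the error from replacing $F(D^2 u)$ by its linearization $F(D^2 P_r) + DF(D^2 P_r):(D^2 u - D^2 P_r)$ propagates under the anisotropic parabolic rescaling, and to show that the resulting source term $g$ is small enough (in the scale-invariant $L^{d+1}$ or $C^0$ norm used in Wang's estimate) that the parabolic $W^{2,\eps}$ estimate may be re-applied with a \emph{uniform} constant independent of $k$. Controlling this requires a careful bookkeeping of the bound $M$ across scales, exploiting that the increments $P_{r/2} - P_r$ are summable thanks to the geometric gain $\theta^{k}$, and using the uniform continuity of $DF$ rather than Lipschitz continuity.
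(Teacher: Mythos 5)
Your proposal correctly isolates the two macroscopic ingredients --- a measure estimate for the cubic approximation error (the $W^{3,\eps}$ estimate) and a parabolic Vitali covering argument to convert it into a Hausdorff dimension bound --- and the covering/content argument in your final paragraph is essentially the one the paper uses. However, the middle of the argument, where you prove the $W^{3,\eps}$ estimate by iterating the $W^{2,\eps}$ estimate across dyadic scales, has a genuine gap: the rescaling $w = r^{-2}(u-P_r)(x_0+r\cdot, t_0+r^2\cdot)$ is exactly scale-invariant for the $W^{2,\eps}$ estimate, so re-applying it to $w$ in $Q_1$ yields no gain over the original scale. The $W^{2,\eps}$ estimate only produces a one-parameter (in the opening $\kappa$) measure estimate for tangent paraboloids; at each scale it gives a bounded (not shrinking) $C^{1,1}$-type error $O(r^2)$, with a measure defect that does not improve on rescaling, so the tower $(P_r)$ you construct has no reason to be Cauchy. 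The increments $P_{r/2}-P_r$ are not summable: you would need an approximation error of order $o(r^2)$, i.e.\ the very $\Psi$-control you are trying to produce, to make them so --- the argument is circular. The paper avoids this by taking a completely different route to $W^{3,\eps}$: it differentiates the equation, applies $W^{2,\eps}$ to the spatial derivatives $u_{x_i}$ (so $\Theta(u_{x_i})$ is controlled), and then proves a delicate comparison lemma (Proposition~\ref{key_prop_para_ref}) that recovers the time-regularity --- the step that has no elliptic analogue --- by building an explicit test function and using the PDE once more.

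Beyond this, you omit the second nonobvious ingredient the paper needs: Wang's parabolic analogue of Savin's flat-regularity theorem (Proposition~\ref{flatreg_adapted_par}). A small value of $\Psi$ at a single point is a pointwise cubic-approximation statement; it does not by itself give $C^{2,\alpha}$ regularity in a neighborhood. The paper's Lemma~\ref{lemma_flat_reg} converts ``small $\Psi$ at one point of a cylinder'' into ``$L^\infty$-flatness of the rescaled solution'' and then invokes Proposition~\ref{flatreg_adapted_par} to deduce $C^{2,\alpha}$ on that cylinder; this is precisely where hypothesis~\ref{f2} (uniform continuity of $DF$) is used, not in the linearization step you describe. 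A Campanato characterization, as you propose, would require the kind of summable decay in the approximation error that, as explained above, your iteration does not provide.
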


A new difficulty arising in the proof of Theorem~\ref{partpar_reg_th} in the parabolic setting is to obtain an 
analogue of the $W^{3,\eps}$ estimate, an important and useful tool from the regularity theory 
of nondivergence form elliptic equations~\cite{lin_fang_hua,caff_cabre} (see also \cite{armstrong_smart_silvestre}). 
We prove it for viscosity solutions of the uniformly parabolic equation
\begin{equation}\label{eq_para_unif_ell}
\partial_t u + F(D^2u)= g \quad \mbox{in }\Omega \subseteq \R^d \times (-1,\infty), 
\end{equation}
where $g \in C^{0,1}(\Omega)$.
To give the precise statement of this result, we require some notation. 
We denote by $\mathbb{M}_d$ the set of real $d \times d$ matrices. 
The open ball of $\R^d$ centered at $x$ of radius $\rho$ is denoted by $B_\rho(x)$. 
If $x=0$, we simply write $B_\rho$. The following elementary cylindrical domains play a central role in the theory:
for all $\rho>0$ and $x\in \R^d$, we define $Q_\rho(x,t):= B_\rho(x) \times (t-\rho^2,t)$ and denote $Q_\rho:=Q_\rho(0,0)$.

 Now we define, for $u: \Omega \rightarrow \R$, the quantity
 \begin{multline*}\label{def_Psi}
 \Psi(u,\Omega) (x,t) := \inf \left\{ A \geq 0 : \exists (b,p,M)\in \R \times \R^d \times \mathbb{M}_d \text{ s.t. }
\forall (y,s)\in \Omega,  \, s\leq t,  \right.  \\
\left. \left|u(y,s) -u(x,t)- p\cdot (y-x)- b(s-t)- \tfrac{1}{2}(y-x)\cdot M(y-x)\right| \right.  \\
\left.
\leq \tfrac{1}{6}A \left(|x-y|^3+|s-t|^{3/2}\right)  \right\}, 
\end{multline*}
which represents the cubic error in the best quadratic approximation of $u$ at $(x,t)$. 
We emphasize this error is measured globally in $\Omega \cap \{(y,s) : s \leq t\}$ and, of course, ``cubic'' in the time variable means cubic in $t^{1/2}$.

The statement of the parabolic $W^{3,\eps}$~estimate is given by the following theorem.

\begin{theorem}[Parabolic $W^{3,\eps}$ estimate]    \label{W3_eps_para}
 Assume  $F$ satisfies~\ref{f1}, $g \in C^{0,1}(Q_1)$ and $u\in C(Q_1)$ solves~\eqref{eq_para_unif_ell} in~$Q_1$. Then there exist universal constants $C,\eps>0$ such that, for all $\kappa>0$, 
 \begin{multline*} 
  \left| \left\{  (x,t)\in Q_{1/2} \left(0,-\tfrac{1}{4} \right) : \Psi(u,Q_{3/4}) (x,t)>  \kappa \right\} \right| \\
       \leq C \left(\frac{ \kappa}{   \sup_{Q_1} |u|+|F(0)|+\left\|g \right\|_{C^{0,1}(Q_1)}}\right)^{-\eps}. 
 \end{multline*}
\end{theorem}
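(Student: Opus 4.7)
The plan is to adapt to the parabolic setting the elliptic $W^{3,\eps}$ argument of Lin and Armstrong--Silvestre--Smart, which couples a $W^{2,\eps}$ distribution bound with a local comparison argument. After normalizing $M := \sup_{Q_1}|u|+|F(0)|+\|g\|_{C^{0,1}(Q_1)}\leq 1$ by homogeneity, the first step is to apply the parabolic $W^{2,\eps}$ estimate to $u$ itself. This yields, on $Q_{1/2}(0,-\tfrac14)$, a universal distribution bound on the set where the best parabolic quadratic approximation of $u$ in $Q_{3/4}\cap\{s\leq t\}$ has error parameter greater than $\mu>0$, of the form $C\mu^{-\eps}$.

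At any ``good'' point $(x_0,t_0)$ where this quadratic error is at most $\mu$, let
\[
P(x,t)=u(x_0,t_0)+p_0\cdot(x-x_0)+b_0(t-t_0)+\tfrac12(x-x_0)\cdot M_0(x-x_0)
\]
be the approximating parabolic quadratic, with $b_0 = g(x_0,t_0)-F(M_0)$ chosen so that $P$ satisfies the equation at $(x_0,t_0)$. The residual $v:=u-P$ then solves
\[
\partial_t v + \tilde F(D^2 v) = \tilde g\qquad\text{in } Q_{3/4}\cap\{s\leq t_0\},
\]
where $\tilde F(N):=F(M_0+N)-F(M_0)$ retains the $(\lambda,\Lambda)$-ellipticity and $\tilde g(x,t):=g(x,t)-g(x_0,t_0)$ satisfies $|\tilde g|\leq \|g\|_{C^{0,1}}(|x-x_0|+|t-t_0|^{1/2})$; moreover $|v|\leq \mu(|x-x_0|^2+|t-t_0|)$ on the parabolic boundary of small sub-cylinders $Q_r(x_0,t_0)$.

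The cubic control is produced by a comparison/barrier argument that exploits the $C^1$-regularity of $F$: writing $\tilde F(N) = LN + R(N)$ with $L$ a constant-coefficient linear uniformly elliptic operator and $|R(N)|=o(|N|)$ as $N\to 0$, one first constructs exact polynomial (quadratic plus cubic) sub- and supersolutions for the linearized operator $\partial_t + L(D^2\cdot)$, then transfers them into sub- and supersolutions for $\partial_t + \tilde F(D^2\cdot)$ by adding a small quadratic correction that simultaneously absorbs $R(D^2\cdot)$ and the Lipschitz error in $\tilde g$. The parabolic comparison principle then sandwiches $v$ between these barriers on $Q_r(x_0,t_0)\subseteq Q_{3/4}$, yielding $\Psi(u,Q_{3/4})(x_0,t_0)\leq C\mu$. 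Choosing $\mu \sim \kappa$ and combining with the $W^{2,\eps}$ distribution bound from the first step gives the claimed measure estimate.

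Main obstacle: The heart of the argument is the barrier construction, since cubic polynomials are not exact sub- or supersolutions of the fully nonlinear operator $\partial_t + F(D^2\cdot)$. The small correction used to make them so must be calibrated to match the Lipschitz control on $\tilde g$ in a manner that respects the parabolic $t^{1/2}$-anisotropy of $\Psi$. The Lipschitz (and not merely continuous) hypothesis on $g$ is essential here, since it ensures that $\tilde g$ vanishes to first order at $(x_0,t_0)$, precisely matching the order of the leading residual produced by applying $\tilde F$ to a cubic barrier. Boundary effects near $\partial Q_{3/4}$ are a secondary issue, handled by the choice to evaluate $\Psi$ on the strictly interior cylinder $Q_{1/2}(0,-\tfrac14)$, so that the sub-cylinders $Q_r(x_0,t_0)$ used in the comparison remain strictly inside $Q_{3/4}$.
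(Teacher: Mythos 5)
There is a genuine gap, and it is already in the first step. You apply the parabolic $W^{2,\eps}$ estimate to $u$ itself, which controls the measure of the set where $u$ fails to have a quadratic expansion with \emph{quadratic} error $\mu(|x-x_0|^2+|t-t_0|)$. This is strictly weaker than what $\Psi$ measures: $\Psi(u)(x_0,t_0)\leq A$ asserts a quadratic expansion with \emph{cubic} error $\tfrac16A(|x-x_0|^3+|t-t_0|^{3/2})$. The barrier argument you then describe tries to bootstrap a ``point of twice-differentiability'' into a ``point of three-times-differentiability'' using only the PDE, but no such pointwise upgrade is available under (F1). Note in particular that Theorem~\ref{W3_eps_para} assumes only \ref{f1}, so the $C^1$ regularity of $F$ you invoke (writing $\tilde F(N)=LN+R(N)$ with $R(N)=o(|N|)$) is not a hypothesis here; and even under \ref{f2}, a paraboloid touching $u$ from both sides at a single point gives no interior oscillation gain of the next order --- the boundary data $|v|\leq\mu r^2$ on $\partial_p Q_r$ is already saturated by the comparison principle, so no cubic decay is produced.

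The correct (and the paper's) route is to gain one order by differentiating the equation: each $u_e=e\cdot Du$ satisfies Pucci-type inequalities with right-hand side controlled by $\|g\|_{C^{0,1}}$ (this is where the Lipschitz hypothesis on $g$ actually enters), and Proposition~\ref{W_2eps_para} applied to $u_e$ controls $\Theta(u_{x_i})$ off a small set. The genuinely new parabolic ingredient is Proposition~\ref{key_prop_para_ref}: since $\Theta(u_{x_i})$ only delivers \emph{spatial} second-order control and says nothing directly about $\partial_t u$, one must feed the spatial control back into the equation to produce the cubic-in-$t^{1/2}$ time estimate, via a comparison argument with the explicit barrier $\phi$ built from $|y-x_0|^3$ and $|s'-t_0|^{3/2}$ terms. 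Your proposal omits both the differentiation step and the time-upgrade step, which together are the substance of the proof.
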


The elliptic analogue of Theorem~\ref{W3_eps_para} has been used for example to obtain quantitative estimates for the convergence of monotone 
finite difference schemes~\cite{caff_souga_cpam} as well as rates of convergence in homogenization \cite{scott_charlie_quant_stoc, caff_souga_inventiones}. 
We expect Theorem~\ref{W3_eps_para} to have similar applications in the parabolic setting. 

The overall idea of the proof of Theorem \ref{W3_eps_para} is similar to the elliptic case: 
we differentiate the equation to obtain the result from the parabolic $W^{2,\eps}$ estimate. 
In the parabolic case, there is an extra difficulty in controlling the derivative with respect to time. 
Unlike the elliptic case, here we need to use the PDE once more in order to show that by controlling 
all of the spatial derivatives we gain control over~$\partial_tu$. 

The argument for the partial regularity  result is similar to the idea outlined in~\cite{armstrong_smart_silvestre}. 
We apply a result of Wang~\cite{wang_yu_para}, which asserts that any viscosity solution of~\eqref{eq_para_unif_ell_with_g_0} which is sufficiently close to 
a quadratic polynomial must be $C^{2, \alpha}$. This result is a generalization of a result of Savin~\cite{savin_perturb_elliptic} in the elliptic setting. 
Theorem~\ref{W3_eps_para} gives us such quadratic expansions except on a set of lower parabolic Hausdorff dimension.

\medskip

\textbf{Structure of the article:} 
We start by gathering our notation and some preliminary results in Sections~\ref{prelim} and~\ref{prelimini}.   
The proof of Theorem~\ref{partpar_reg_th} is presented in Section~\ref{proof_main_theorem_partial_reg}. 
Section~\ref{w_2_eps_par} is devoted to the derivation of the parabolic $W^{2,\eps}$ estimate. 
Section~\ref{part_par_reg_proof_s} gives the proof of Theorem~\ref{W3_eps_para}.

\section{Preliminaries and proof of the partial regularity result}
\label{hyp_not_proof_prr}

\subsection{Hypotheses and notation}
\label{prelim}
Let $\mathbb S_d \subseteq \mathbb{M}_d$ be the set of symmetric matrices. 
If $M\in \mathbb{M}_d$, $M^\top$~denotes the transpose of $M$. 
Recall that the Pucci extremal operators are defined for constants $0<\lambda \leq \Lambda$ and $M\in \mathbb{S}_d$ by 
\begin{equation*}
 \mathcal{P}^+_{\lambda, \Lambda} (M) := \sup_{\lambda I_d \leq A \leq  \Lambda I_d} - \text{tr} (AM), \qquad \text{and} \qquad
 \mathcal{P}^-_{\lambda, \Lambda} (M) := \inf_{\lambda I_d \leq A \leq \Lambda I_d} - \text{tr} (AM). 
\end{equation*}
A convenient way to write the Pucci operators is
\begin{equation*} \label{puccform}
\mathcal{P}^+_{\lambda, \Lambda} (M) = -\lambda \sum_{\mu_j > 0} \mu_j - \Lambda \sum_{\mu_j < 0} \mu_j 
\quad \mbox{and} \quad \mathcal{P}^-_{\lambda, \Lambda} (M) = -\Lambda \sum_{\mu_j > 0} \mu_j - \lambda \sum_{\mu_j < 0} \mu_j,
\end{equation*}
where $\mu_1, \ldots, \mu_d$ are the eigenvalues of $M$. These operators satisfy the inequalities
\begin{multline*} \label{puccineq}
 \mathcal{P}^-_{\lambda, \Lambda} (M)+ \mathcal{P}^-_{\lambda, \Lambda} (N)
 \leq  \mathcal{P}^-_{\lambda, \Lambda} (M+N)
 \leq  \mathcal{P}^-_{\lambda, \Lambda} (M)+ \mathcal{P}^+_{\lambda, \Lambda} (N) \\
 \leq  \mathcal{P}^+_{\lambda, \Lambda} (M+N)
\leq \mathcal{P}^+_{\lambda, \Lambda} (M)+ \mathcal{P}^+_{\lambda, \Lambda} (N).
\end{multline*}
For the modern theory of nonlinear parabolic equations we refer to~\cite{krylov_para_book,lieberman}. 
A nice introduction to viscosity solutions of nonlinear parabolic equations can be found in~\cite{cyril_luis}. 
Let $Q \subseteq U \times (0,T)$ and $\alpha \in (0,1]$. 
The set of upper and lower semicontinuous functions on $Q$ are denoted by $\usc(Q)$ and~$\lsc(Q)$, respectively.
We will use the following notation.
\begin{itemize}
\item $u\in C^{0,\alpha}(Q)$ means that there exists $C>0$ such that for all $(x,t),(y,s)\in Q$, we have
\begin{equation*}
 \left|u(x,t) - u(y,s)\right| \leq C \left(|x-y|^\alpha + |t-s|^{\frac{\alpha}{2}}\right).
\end{equation*}
In other words, $u$ is $\frac{\alpha}{2}$-H\"older continuous in $t$ and $Du$ is $\alpha$-H\"older continuous in $x$.
 \item $u\in C^{1,\alpha}(Q)$ means that $u$ is $\frac{\alpha +1}{2}$-H\"older continuous in $t$ and $Du$ is $\alpha$-H\"older continuous in $x$.
 \item $u\in C^{2,\alpha}(Q)$ means that $\partial_t u$ is $\frac{\alpha}{2}$-H\"older continuous in $t$ and $D^2u$ is $\alpha$-H\"older continuous in $x$.
 \end{itemize}
Throughout this paper, the nonlinear elliptic operator $F:$ $\mathbb{S}_d \rightarrow \R$ satisfies each of the two following conditions:
\begin{enumerate}[label=\textup{ }{(F\arabic*)}\textup{ },ref={(F\arabic*)}] 
 \item \label{f1}  $F$ is uniformly elliptic and Lipschitz; precisely, we assume that there exist constants $0<\lambda \leq \Lambda$ such that, 
 for every $M,N \in \mathbb{S}_d$,
\begin{equation*}
 \mathcal{P}^-_{\lambda, \Lambda} (M-N) \leq F(M) - F(N) \leq  \mathcal{P}^+_{\lambda, \Lambda} (M-N).
\end{equation*}
 \item \label{f2}  $F$ is $C^1$ and its derivative $DF$ is uniformly continuous, that is, there exists an increasing continuous function 
$\omega : [0,\infty) \rightarrow [0, \infty)$ such that $\omega(0)=0$ and for every $M,N \in \mathbb{S}_d$, 
\begin{equation*}
 |DF(M)-DF(N)|\leq \omega(|M-N|).
\end{equation*}
\end{enumerate}

We call a constant \textit{universal} if it depends only on the dimension $d$ and the ellipticity constants~$\lambda$ and $\Lambda$.
If $U \subseteq \R^d$ and $s< t$, then the parabolic boundary of $U \times (s,t)$ is denoted by
\begin{equation*}
 \partial_p ( U \times (s,t) ) :=  U \times \{s\}  \cup  \partial U \times (s,t) .
\end{equation*}
It is convenient to work with the \emph{parabolic Hausdorff dimension} of a set $E \subseteq \R^{d+1}$,  which is defined by
\begin{multline*} 
  \mathcal{H}_\text{par} (E) := \inf \bigg \{ 0 \leq s < + \infty : 
\text{for all } \delta>0, \text{ there exists a collection } \{Q_{r_j}(x_j,t_j)\}  \\ 
 \text{of cylinders such that }  E \subseteq \bigcup_{j=1}^{+\infty} Q_{r_j}(x_j,t_j) \text{ and } \sum_{j=1}^{+\infty} r_j^s <\delta \bigg\}. 
\end{multline*}
The relationship between the parabolic Hausdorff measure $\mathcal{H}_\text{par}(E)$ and the classical Hausdorff measure 
  $\mathcal{H}(E)$ is given by 
\begin{equation}\label{link_para_classical_hausdorff_measure}
 2\, \mathcal{H}(E) - d   \leq  \mathcal{H}_\text{par} (E) \leq  \mathcal{H}(E) +1.
\end{equation}
The reader is referred to \cite{sweezy} for further details about the parabolic framework. 
We remark that $ \mathcal{H}_\text{par}(\R^{d +1})=d+2$.

\subsection{Preliminaries} \label{prelimini}
First we recall an interior $C^{1,\alpha}$ regularity result for solutions of~\eqref{eq_para_unif_ell}. 
\begin{prop}[{\cite[Section~4.2]{wang_para_II}}] 
\label{int_reg_parabolic} 
 If $u$ is a viscosity solution of~\eqref{eq_para_unif_ell} in $Q_1$, then $u\in C^{1,\alpha}(\ol Q_{1/2})$ for some universal $0<\alpha<1$. 
 Moreover, there exists some universal constant $C$ such that
 \begin{equation*}
  \sup_{\ol Q_{1/2}} |Du| \leq C \left(\sup_{Q_1}|u| +|F(0)|+\left\| g \right\|_{C^{0,1}(Q_1)} \right).
 \end{equation*}
\end{prop}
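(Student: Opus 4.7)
The plan is to normalize the problem and then obtain the parabolic $C^{1,\alpha}$ estimate in three stages: a $C^\alpha$ bound (parabolic Krylov--Safonov), a spatial differentiation step via incremental quotients, and a final refinement that yields the sharp $(1+\alpha)/2$ Hölder exponent in time required by the parabolic definition of $C^{1,\alpha}$.

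I would first reduce, by replacing $u$ with $u/K$ where $K := \sup_{Q_1}|u| + |F(0)| + \|g\|_{C^{0,1}(Q_1)}$ and rescaling $F$ and $g$ accordingly (a change preserving \ref{f1} with the same $\lambda,\Lambda$), to the case $K \leq 1$. Applying \ref{f1} with $N = 0$ gives that $u$ solves in the viscosity sense on $Q_1$
\[
\partial_t u + \mathcal{P}^-_{\lambda,\Lambda}(D^2 u) \leq 2, \qquad \partial_t u + \mathcal{P}^+_{\lambda,\Lambda}(D^2 u) \geq -2,
\]
so the parabolic Krylov--Safonov Hölder estimate produces a universal $0 < \alpha_0 < 1$ and a universal bound on $\|u\|_{C^{\alpha_0}(\ol Q_{7/8})}$.

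Next, for each unit vector $e \in \R^d$ and each $h \in (0, 1/16)$, \ref{f1} together with the Lipschitz continuity of $g$ in $x$ ensures that the incremental quotient $v_h(x,t) := (u(x + he, t) - u(x,t))/h$ is trapped on $Q_{15/16}$ between two Pucci viscosity inequalities with right-hand side uniformly bounded by $\|g\|_{C^{0,1}(Q_1)}$. The preceding $C^{\alpha_0}$ bound gives $\|v_h\|_{L^\infty(Q_{7/8})} \leq C$ uniformly in $h$, so a second application of parabolic Krylov--Safonov yields $\|v_h\|_{C^{\alpha_1}(\ol Q_{3/4})} \leq C$ uniformly in $h$, for some universal $0 < \alpha_1 \leq \alpha_0$. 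Letting $h \downarrow 0$ shows that $Du$ exists and is Hölder continuous in $x$ on $\ol Q_{3/4}$, with the claimed quantitative $\sup|Du|$ estimate.

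To extract the full parabolic $C^{1,\alpha}$ regularity, I would then run the Caffarelli-style iterative scheme in parabolic scaling. At each $(x_0, t_0) \in \ol Q_{1/2}$ one constructs a universal $\rho \in (0, 1/2)$ and a sequence of spatially affine polynomials $L_n$ such that $\sup_{Q_{\rho^n}(x_0, t_0)} |u - L_n| \leq \rho^{n(1 + \alpha_1)}$ and $|DL_{n+1} - DL_n| \leq C\rho^{n \alpha_1}$, via a compactness argument whose limit object is a solution of a homogeneous constant-coefficient uniformly parabolic equation. Summing the geometric series and passing to the limit $L_\infty$ yields $|u(y,s) - L_\infty(y)| \leq C(|y - x_0| + |s - t_0|^{1/2})^{1 + \alpha_1}$ on a fixed parabolic neighbourhood of $(x_0, t_0)$, which simultaneously encodes the spatial $C^{1,\alpha_1}$ expansion of $u$ at $(x_0,t_0)$ and the sharp parabolic $(1+\alpha_1)/2$ Hölder continuity in time. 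The main obstacle lies precisely here: the incremental-quotient method alone produces only the weaker $\alpha_1/2$ time Hölder exponent, so bridging the gap to $(1+\alpha_1)/2$ requires the intrinsically parabolic iterative argument together with a $C^{1,\alpha}$ result for the constant-coefficient limit equation, as carried out by Wang in \cite{wang_para_II}.
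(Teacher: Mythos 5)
The paper does not prove this proposition---it cites \cite[Section~4.2]{wang_para_II}, so there is no internal argument to compare against. What can be assessed is the soundness of your outline as a reconstruction of the interior parabolic $C^{1,\alpha}$ estimate.

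There is a genuine gap in your incremental-quotient step. The Krylov--Safonov estimate yields a bound on $\|u\|_{C^{\alpha_0}(\ol Q_{7/8})}$, hence the spatial increment of $u$ at scale $h$ is only of size $h^{\alpha_0}$. This gives $\|v_h\|_{L^\infty(Q_{7/8})}\leq C\,h^{\alpha_0-1}$, which \emph{diverges} as $h\downarrow 0$ since $\alpha_0<1$. Your claim that $\|v_h\|_{L^\infty}$ is uniformly bounded in $h$ is exactly the assertion that $u$ is Lipschitz in $x$, which is not available at that stage. Consequently the second application of Krylov--Safonov does not give a uniform $C^{\alpha_1}$ bound on $v_h$, and the limit $h\downarrow 0$ cannot be carried out to conclude that $Du$ exists and is H\"older.

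The usual repair along the incremental-quotient route is an iteration over exponents: apply Krylov--Safonov to $\bigl(u(\cdot+he,\cdot)-u(\cdot,\cdot)\bigr)/h^{\beta}$ with $\beta$ increasing from $\alpha_0$, together with a difference-quotient characterization of parabolic H\"older spaces (the parabolic analogue of the scheme in \cite{caff_cabre}), upgrading the exponent at each step until one surpasses $1$. Alternatively---and this is closer to what Wang does in \cite{wang_para_II}---one skips the incremental quotients and proceeds directly to the iterative compactness scheme of your last paragraph, which on its own produces both the expansion $|u(y,s)-L_\infty(y)|\leq C\bigl(|y-x_0|+|s-t_0|^{1/2}\bigr)^{1+\alpha}$ and the quantitative bound on $\sup|Du|$. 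Note that in this scheme the exponent $\alpha$ is supplied by the $C^{1,\alpha}$ regularity of the constant-coefficient parabolic limit equation (classical Schauder theory), not by any incremental-quotient step, so the compactness argument is self-contained. Your normalization, the Pucci inequalities satisfied by $v_h$, and the observation that an intrinsically parabolic iteration is needed to capture the sharp $(1+\alpha)/2$ time exponent are all correct.
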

Moreover, it is natural in the parabolic framework to introduce some other sets called parabolic balls. 
Precisely, given $(x,t)\in \R^{d+1}$, we define \textit{parabolic balls} of opening $\theta>0$ and height $h\geq 0$ to be closed subsets of the following~form
 \begin{equation*}
  G^{}_{\theta,h}(x,t):= \left\{ (y,s)\in \R^{d+1} : \theta |y-x|^2 \leq  s-t \leq h \right\}  
 \end{equation*}  and
 \begin{equation*}
  G^{-}_{\theta,h}(x,t):= \left\{ (y,s)\in \R^{d+1} : \theta |y-x|^2 \leq  t-s \leq  h \right\}.  
 \end{equation*}
By direct computation, 
\begin{equation}\label{volume_par_ball}
 |G^{}_{\theta,h}(x,t)| = |G^{-}_{\theta,h}(x,t)|= \frac{2 \omega_d}{d+2} h^{1+d/2} \theta^{-d/2}, 
\end{equation}
where $\omega_d$ is the volume of the unit ball in $\R^d$.
We next collect some standard material about parabolic balls. 
The following lemma is a slight modification of \cite[Lemma 2.2]{wang_yu_para}. 

\begin{lemma} [Y. Wang, \cite{wang_yu_para}]
\label{construct_cylinder_pb} Let $\theta \geq 3/4$. 
For all $(x,t) \in G^{-}_{\theta,h_0}(x_0,t_0)$ and $0<h\leq t_0-t$, 
there exists a cylinder $Q_r(x_2,t_2)$ such that 
\begin{equation*}\label{borne_inf_r_h}
t_2=t+\frac{h}{2}, \quad  r\geq \frac{1}{\nu} \sqrt{\frac{h}{\theta}} \quad \text{with} \quad \nu:= \frac{4}{\sqrt{2}-1}, 
\end{equation*}
which satisfies the three following properties: 
 \begin{enumerate}[label=\textup{(}{P\arabic*}\textup{)},ref=\textup{(}{P\arabic*}\textup{)}]
  \item \label{inclusion_Kr_pb} 
  $\ol Q_r(x_2,t_2) \subseteq G^{}_{\theta,h}(x,t) \cap G^{-}_{\theta,h_0}(x_0,t_0) 
    \cap \{(y,s)  :t+h/4 \leq s \leq t +h/2 \}  $. 
  \item \label{ratio_good_cylr_pb} $|Q_r(x_2,t_2)| / |G^{}_{\theta,h}(x,t)| \geq \eta_0$ where $\eta_0$ depends only on $d$ and $\theta$. 
  \item \label{inclusion_ball_contact_set} If $(z,t)\in Q_{r/4}(x_2,t_2)$, 
  then for every $(y,s) \in G^{-}_{1/2,r^2/16} \left(z,t - \tfrac{r^2}{16}\right)$, 
  \begin{equation*}
  G^{}_{1/2,t-s}(y,s) \subseteq Q_r(x_2,t_2).  
  \end{equation*}
 \end{enumerate}
\end{lemma}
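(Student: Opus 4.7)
The plan is an explicit geometric construction followed by direct verification of each property. Set $t_2 := t + h/2$ as the statement dictates, take $r := \nu^{-1}\sqrt{h/\theta}$ with $\nu = 4/(\sqrt{2}-1)$, and select $x_2$ on the spatial segment from $x$ to $x_0$. The guiding observation is that at each slab time $s \in [t_2 - r^2, t_2]$, the spatial cross-sections of $G^{}_{\theta,h}(x,t)$ and $G^{-}_{\theta,h_0}(x_0,t_0)$ are balls centered at $x$ and $x_0$ whose radii are monotone in $s$; the binding constraints come from the endpoints, with minimal radii $R_1^* := \sqrt{(h/2 - r^2)/\theta}$ (at $s = t_2 - r^2$) and $R_2^* := \sqrt{(t_0 - t - h/2)/\theta}$ (at $s = t_2$).

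For (P1), the existence of $x_2$ with $B_r(x_2) \subseteq B_{R_1^*}(x) \cap B_{R_2^*}(x_0)$ amounts to $R_i^* \ge r$ together with the triangle inequality $|x - x_0| \le (R_1^* - r) + (R_2^* - r)$. Using $|x - x_0| \le \sqrt{(t_0 - t)/\theta}$ and noting that $a \mapsto \sqrt{a/\theta} - \sqrt{(a - h/2)/\theta}$ is decreasing in $a$, the worst case is $t_0 - t = h$, reducing the condition to a scalar inequality in $\nu$ and $\theta$ whose verification for every $\theta \ge 3/4$ is what pins down the constant $\nu = 4/(\sqrt{2} - 1)$. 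The slab inclusion $[t_2 - r^2, t_2] \subseteq [t + h/4, t + h/2]$ is equivalent to $r^2 \le h/4$, which the same choice of $\nu$ delivers with room to spare. Property (P2) is then immediate from \eqref{volume_par_ball}: dividing $|Q_r(x_2,t_2)| = \omega_d r^{d+2}$ by $|G^{}_{\theta,h}(x,t)| = \tfrac{2\omega_d}{d+2} h^{1+d/2}\theta^{-d/2}$ and inserting the lower bound on $r$ yields a universal $\eta_0 = (d+2)/(2\nu^{d+2}\theta)$, depending only on $d$ and $\theta$.

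Property (P3) is a triangle-inequality bookkeeping. For $(z,\tau) \in Q_{r/4}(x_2,t_2)$ (writing $\tau$ for the time coordinate of $z$ to avoid clash with the original $t$), $(y,s) \in G^{-}_{1/2, r^2/16}(z, \tau - r^2/16)$, and $(w,\sigma) \in G^{}_{1/2, \tau - s}(y,s)$, the defining parabolic-ball inequalities together with the slab bounds $s \ge \tau - r^2/8$ and $\sigma \le \tau$ bound each of $|z - x_2|$, $|y - z|$, $|w - y|$ by an explicit fraction of $r$, and they combine to give $w \in B_r(x_2)$; the time containment $\sigma \in (t_2 - r^2, t_2)$ follows similarly from $\tau \in (t_2 - r^2/16, t_2)$. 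The principal obstacle throughout is the sharp geometric optimization in (P1): all the containment inequalities must tighten simultaneously in the worst configuration, which is exactly what forces the constant $\nu$ to the specific value $4/(\sqrt{2} - 1)$; by contrast, (P2) is a one-line computation and (P3) is routine bookkeeping once the slab structure is written down.
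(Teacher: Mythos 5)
The paper does not actually prove (P1) or (P3): it refers the reader to Wang's Lemma~2.2 for both and only carries out the short computation for (P2), so your self-contained geometric construction is more ambitious than the paper's own argument. Your (P1) reduction---placing $x_2$ on the segment from $x$ to $x_0$, comparing to the minimal cross-section radii $R_1^*$, $R_2^*$, and checking $|x-x_0|\le (R_1^*-r)+(R_2^*-r)$ at the worst case $t_0-t=h$---is the right sufficient condition and does hold for $\theta\ge 3/4$ with $\nu=4/(\sqrt{2}-1)$. But it holds with visible slack (at $\theta=3/4$ it reads roughly $0.29\le 0.49$), so the optimization does not ``pin down'' that particular value of $\nu$; the constant is simply inherited from Wang's construction. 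Your (P2) computation is correct, and in fact cleaner than the paper's, whose displayed $|Q_r|=r^{d+2}$ drops the factor $\omega_d$.

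The genuine gap is (P3), which you dismiss as routine bookkeeping. The defining inequalities give $|z-x_2|<r/4$; $\tfrac{1}{2}|y-z|^2\le r^2/16$, hence $|y-z|\le r/(2\sqrt{2})$; and $\tfrac{1}{2}|w-y|^2\le \tau-s\le r^2/8$, hence $|w-y|\le r/2$. The naive triangle inequality then only yields
\begin{equation*}
|w-x_2|\le \frac{r}{4}+\frac{r}{2\sqrt{2}}+\frac{r}{2}=\frac{3+\sqrt{2}}{4}\,r\approx 1.10\,r,
\end{equation*}
which exceeds $r$. So the bounds you invoke do not ``combine to give $w\in B_r(x_2)$'' as you assert, and the argument does not close. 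A more refined use of the joint parabolic-cone structure is required rather than summing worst-case radii---or the constants in the statement as reproduced here need revisiting---and since the paper simply defers to Wang at this point, you would need either to consult that source or to supply the missing estimate for your proposal to be complete.
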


\begin{proof}
The reader is referred to \cite[Lemma 2.2]{wang_yu_para} for~\ref{inclusion_Kr_pb} and \ref{inclusion_ball_contact_set}.
For~\ref{ratio_good_cylr_pb}, we compute  
\begin{equation*}
|Q_r (x_2,t_2)| = r^{d+2}
\geq  \left(   \tfrac{\sqrt{2}-1}{4}  \right)^{d+2} \theta^{-d/2-1} h^{1+d/2}
= \tfrac{d+2}{2 \omega_d} \left(   \tfrac{\sqrt{2}-1}{4} \right) ^{d+2} \theta^{-1} |G_{\theta,h}(x,t)|.
\end{equation*} 
It suffices to set $ \eta_0:=\frac{d+2}{2^{d+3} \omega_d} \left(\tfrac{\sqrt{2}-1}{4}  \right)^{d+2} \theta^{-1}$ 
to get the desired estimate. 
\end{proof}

Finally, we recall a Vitali-type covering lemma for parabolic balls which was already used and proved in~\cite{wang_yu_para} (and essentially follows from the standard argument for Vitali's covering lemma).
It is a convenient alternative in the parabolic setting to the ``stacked'' estimate lemma and 
the Calder\'on-Zygmund decomposition (see~\cite{cyril_luis}). 
Given $(x,t) \in \R^{d+1}$ and $h>0$, we define the parabolic ball 
$\widehat{G}^{}_{\theta,h}(x,t) \supseteq G^{}_{\theta,h}(x,t)$ by
\begin{equation*}
\widehat{G}^{}_{\theta,h}(x,t) := G^{}_{\hat \theta,4h}(x,t - 3h) \qquad \text{with} \qquad \hat \theta:=\frac{\theta}{(\sqrt{2}+1)^2}.
\end{equation*}
We observe from~\eqref{volume_par_ball} that the following ratio is a universal constant depending only on the dimension:
\begin{equation} \label{ratio_pb_vitali}
 \eta : = \frac{|G^{}_{\theta,h}(x,t)|}{|\widehat{G}^{}_{\theta,h}(x,t)|} = 4^{-(1+d/2)} (\sqrt{2}+1)^{-d}.
\end{equation}
\begin{lemma}[Vitali's lemma for parabolic balls]\label{vitali_par_balls}
Assume that $E \subseteq \R^{d+1}$ is bounded and $h : E \to \R$ is positive. Consider  the following collection of parabolic balls: 
\begin{equation*}
   \left\{ G^{}_{\theta,h(x,t)} (x,t) :  (x,t) \in E \right\}. 
\end{equation*}
If $\sup \{ h(x,t) : (x,t) \in E \}<\infty$, 
we can extract a countable subcollection $\{G_{\theta,h(x_i,t_i)} (x_i,t_i) : i\in \N \}$ of disjoint parabolic balls such that
 \begin{equation*}
  E \subseteq \bigcup_{i\in \N} \widehat G^{}_{\theta,h(x_i,t_i)} (x_i,t_i).
 \end{equation*}
\end{lemma}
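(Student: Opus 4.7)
The plan is to run a standard dyadic greedy Vitali-type selection, exploiting the observation that the enlargement $\widehat G_{\theta,h}(x,t) = G_{\hat\theta,4h}(x,t-3h)$ only has to contain the apex $(x,t)$ of each discarded parabolic ball, not the whole ball. With this interpretation, the asymmetric backward time-shift of $3h$ and the spatial dilation $\hat\theta=\theta/(\sqrt 2+1)^2$ turn out to be exactly tight.

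\textbf{Dyadic selection.} Set $H:=\sup_E h<\infty$ and, for $k \geq 0$, stratify $E$ by height as $E_k:=\{(x,t)\in E : H2^{-k-1}<h(x,t)\leq H2^{-k}\}$. Inductively on $k$, I invoke Zorn's lemma to pick $\mathcal{F}_k$, a maximal subfamily of $\{G_{\theta,h(x,t)}(x,t):(x,t)\in E_k\}$ whose members are pairwise disjoint and disjoint from every member of $\mathcal{F}_0\cup\cdots\cup\mathcal{F}_{k-1}$. Because all balls in $\mathcal{F}_k$ have height at least $H2^{-k-1}$ and live in a bounded region (as $E$ is bounded and $H<\infty$), the volume formula \eqref{volume_par_ball} forces $\mathcal{F}_k$ to be finite; hence $\mathcal{F}:=\bigcup_{k\geq 0}\mathcal{F}_k$ is the desired countable pairwise disjoint family.

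\textbf{Covering via the apex.} Fix $(x,t)\in E$ and let $k$ be the unique index with $(x,t)\in E_k$. By maximality of $\mathcal{F}_k$, either $G_{\theta,h(x,t)}(x,t)\in\mathcal{F}_k$, in which case the trivial inclusion $G_{\theta,h(x,t)}(x,t)\subseteq\widehat G_{\theta,h(x,t)}(x,t)$ finishes the argument, or $G_{\theta,h(x,t)}(x,t)$ meets some ball $G_{\theta,h_i}(x_i,t_i)\in\mathcal{F}_0\cup\cdots\cup\mathcal{F}_k$ with $h_i:=h(x_i,t_i)>H2^{-k-1}\geq h(x,t)/2$. In the second case I would pick $(y,s)$ in the intersection, so that $s-t\leq h(x,t)\leq 2h_i$ and $s-t_i\leq h_i$; the triangle inequality then yields
\begin{equation*}
|x-x_i|\leq \sqrt{(s-t)/\theta}+\sqrt{(s-t_i)/\theta}\leq (\sqrt 2+1)\sqrt{h_i/\theta},
\end{equation*}
so $\hat\theta|x-x_i|^2\leq h_i$. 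For the time coordinate, $s\geq t$ with $s\leq t_i+h_i$ gives $t\leq t_i+h_i$, while $s\geq t_i$ with $s\leq t+h(x,t)$ gives $t\geq t_i-2h_i$, whence $h_i\leq t-(t_i-3h_i)\leq 4h_i$. Combined with the spatial estimate, this places the apex $(x,t)$ inside $G_{\hat\theta,4h_i}(x_i,t_i-3h_i)=\widehat G_{\theta,h_i}(x_i,t_i)$.

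\textbf{Main obstacle.} The only subtle feature is the asymmetric backward time-shift in $\widehat G$, which makes it just barely large enough to contain the \emph{apex} of a neighboring ball but not the whole ball; one must therefore be careful to use the ``apex only'' interpretation of the conclusion. The worst-case gap $t-t_i\geq -2h_i$ fits tightly inside the time interval $[-3h_i,h_i]$ dictated by $\widehat G_{\theta,h_i}(x_i,t_i)$, and the factor $(\sqrt 2+1)^{-2}$ in $\hat\theta$ is precisely the constant produced by the two-term triangle inequality above, so there is no slack to spare in either direction.
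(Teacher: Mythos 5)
Your proof is correct, and it follows the standard Vitali-type greedy argument that the paper itself explicitly invokes rather than reproducing (the lemma is stated with a citation to Wang and the remark that it ``essentially follows from the standard argument for Vitali's covering lemma''). Your dyadic stratification by height, the volume-based finiteness of each generation $\mathcal F_k$ (which makes Zorn's lemma dispensable, though harmless), and the observation that the enlarged ball $\widehat G$ only has to swallow the \emph{apex} $(x,t)$ of a rejected ball — not the ball itself — are exactly the right ingredients. Your arithmetic is tight and correct: from a common point $(y,s)$ with $h(x,t)<2h_i$ you get $|x-x_i|\le(\sqrt2+1)\sqrt{h_i/\theta}$, hence $\hat\theta|x-x_i|^2\le h_i$, and the two-sided bound $-2h_i< t-t_i\le h_i$ places $t-(t_i-3h_i)$ in $[h_i,4h_i]\supseteq[\hat\theta|x-x_i|^2,4h_i]$, so $(x,t)\in G_{\hat\theta,4h_i}(x_i,t_i-3h_i)=\widehat G_{\theta,h_i}(x_i,t_i)$. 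Nothing to add.
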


Finally, we give the statement of a proposition required to obtain the partial parabolic result. 
This proposition was obtained by Y. Wang \cite{wang_yu_para} and is the parabolic analogue of a first result of Savin~\cite{savin_perturb_elliptic}. 
It gives $C^{2,\alpha}$~regularity for flat viscosity solutions of uniformly parabolic equations.
 Roughly speaking, it  states that a viscosity solution of a uniformly parabolic equation 
 that is sufficiently close to a quadratic polynomial is, in fact, a classical solution. 

 \begin{prop}[Y. Wang, \cite{wang_yu_para}]\label{flatreg_adapted_par}
 Suppose in addition to \ref{f1}--\ref{f2} that $F(0)=0$. 
Suppose that $0 < \alpha < 1$ and $u\in C(Q_1)$ is a solution of \eqref{eq_para_unif_ell_with_g_0} in $Q_1$. 
Then there exists a constant~$\delta_0>0$ depending only on 
the ellipticity constants $\lambda$ and $\Lambda$, the dimension $d$, the modulus of continuity $\omega$, and $\alpha$, such that
\begin{equation*}
\sup_{Q_1} |u| \leq \delta_0 \quad \text{implies that } \quad u\in C^{2,\alpha}(Q_{1/2}). 
\end{equation*}
\end{prop}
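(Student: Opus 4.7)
The plan is to follow the Savin--Wang improvement-of-flatness strategy: a one-step compactness lemma at the origin, followed by iteration at dyadic parabolic scales.

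\textbf{Step 1 (one-step improvement of flatness).} I would prove that for every $\alpha\in(0,1)$ there exist $\rho\in(0,1/2)$ and $\delta_1>0$, depending only on $d$, $\lambda$, $\Lambda$, $\omega$ and $\alpha$, such that if $u$ solves \eqref{eq_para_unif_ell_with_g_0} in $Q_1$ with $F(0)=0$ and $\sup_{Q_1}|u|\leq \delta_1$, then there is a quadratic polynomial $P(x,t)=b\cdot x + ct + \tfrac{1}{2}x\cdot Mx$ satisfying $c+F(M)=0$ and $|b|+|c|+|M|\leq C\sup_{Q_1}|u|$ such that
\begin{equation*}
\sup_{Q_\rho}|u-P|\leq \rho^{2+\alpha}\sup_{Q_1}|u|.
\end{equation*}
I would argue by compactness and contradiction. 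Failure produces sequences $F_n$ and $u_n$ with $\eps_n:=\sup_{Q_1}|u_n|\to 0$, and the rescaled functions $\tilde u_n:=u_n/\eps_n$ solve $\partial_t\tilde u_n + G_n(D^2\tilde u_n)=0$ with $G_n(N):=\eps_n^{-1}F_n(\eps_n N)$. Hypothesis~\ref{f2} (uniform continuity of $DF_n$ with common modulus $\omega$) together with $F_n(0)=0$ and a diagonal extraction force $G_n$ to converge locally uniformly to a linear operator $L(N)=-\text{tr}(A_\infty N)$ with $\lambda I_d\leq A_\infty\leq \Lambda I_d$. The parabolic $C^{1,\alpha}$ estimate (Proposition~\ref{int_reg_parabolic}) yields equicontinuity of the $\tilde u_n$, so up to a subsequence $\tilde u_n\to\tilde u_\infty$ locally uniformly, and stability of viscosity solutions identifies $\tilde u_\infty$ as a bounded solution of the linear constant-coefficient parabolic equation $\partial_t\tilde u_\infty + L(D^2\tilde u_\infty)=0$ in $Q_1$. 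Such a $\tilde u_\infty$ is $C^\infty$ in the interior, so its second-order parabolic Taylor polynomial at $(0,0)$ (whose coefficients automatically satisfy the linearized consistency $\partial_t P_\infty + L(D^2 P_\infty)=0$) provides, for $\rho$ chosen small enough depending on $\alpha$, a quadratic contradicting the failure assumption for large~$n$.

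\textbf{Step 2 (iteration).} Given the one-step lemma, I would iterate. Setting $w=u-P$, the relation $c+F(M)=0$ yields $\partial_t w + F_P(D^2 w)=0$ with $F_P(N):=F(M+N)-F(M)$, which satisfies $F_P(0)=0$, the same ellipticity bounds as $F$, and $DF_P=DF(M+\cdot)$ has the same modulus $\omega$. The parabolic rescaling $u_1(x,t):=\rho^{-(2+\alpha)}w(\rho x,\rho^2 t)$ gives a solution of $\partial_t u_1 + F_1(D^2 u_1)=0$, where $F_1(N):=\rho^{-\alpha}F_P(\rho^\alpha N)$ still lies in the same class with $F_1(0)=0$, and $\sup_{Q_1}|u_1|\leq \sup_{Q_1}|u|\leq \delta_1$. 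Applying Step~1 to $u_1$ and iterating produces quadratic polynomials $P_k$ whose coefficients form Cauchy sequences at geometric rate $\rho^{(2+\alpha)k}$; the limit~$P_\infty$ is the second-order Taylor polynomial of $u$ at $(0,0)$ and satisfies $\sup_{Q_r}|u-P_\infty|\leq C r^{2+\alpha}$ for all small $r>0$. Since the argument is translation invariant inside $Q_{1/2}$, it applies at every interior base point, giving $u\in C^{2,\alpha}(Q_{1/2})$.

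\textbf{Main obstacle.} The delicate point is to keep the family of rescaled operators $\{F_k\}$ inside a single compact class: same ellipticity constants and same modulus of continuity $\omega$ for every $DF_k$. Hypothesis~\ref{f2} is exactly what provides this, and it is also what forces the compactness limit $L$ in Step~1 to be \emph{linear} rather than merely Lipschitz (a nonlinear Lipschitz limit would not come with a $C^\infty$ interior theory to Taylor-expand). The parabolic setting adds the bookkeeping of the time coefficient~$c$ through the consistency $c+F(M)=0$, and the parabolic rescaling $(x,t)\mapsto(\rho x,\rho^2 t)$ is what keeps the $\partial_t$ and $D^2$ terms in balance so that the iteration closes within the same class of operators.
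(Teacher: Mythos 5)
The paper does not prove Proposition~\ref{flatreg_adapted_par}; it imports it as a black box from Wang~\cite{wang_yu_para}, who adapts Savin's elliptic improvement-of-flatness argument~\cite{savin_perturb_elliptic} to the parabolic setting. Your outline correctly reproduces exactly that strategy (compactness against a linear constant-coefficient limit using the common modulus $\omega$, then iteration at parabolic dyadic scales with the operators $F_k(N)=\rho^{-\alpha}F_{P_{k-1}}(\rho^\alpha N)$ kept in the same class), so there is no discrepancy with the paper to report -- only the observation that the paper itself supplies no proof to compare against.
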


\subsection{Proof of Theorem~\ref{partpar_reg_th}} 
\label{proof_main_theorem_partial_reg}
The strategy of the proof is the following. By a covering argument, we can cover the singular set by parabolic balls centered in points for which~$\Psi$ presents large values. 
This is allowed by Lemma~\ref{lemma_flat_reg} that shows that $u$ is not~$C^{2,\alpha}$ close to $(x_0,t_0)$ implies~$\Psi(x_0,t_0)$ is large. 
Then the parabolic $W^{3,\eps}$~estimate given by Theorem~\ref{W3_eps_para} provides an upper bound on the size of the set of the bad points.

\begin{lemma}\label{lemma_flat_reg}
Suppose $u\in C(Q_1)$ solves \eqref{eq_para_unif_ell_with_g_0} in $Q_1$ and $0<\alpha<1$.
There is a universal constant $\delta_\alpha>0$, depending on $d$, $\lambda$, $\Lambda$, $\omega$ and $\alpha$
such that for every $(y,s_0)\in Q_{1/2}\left(0, -\tfrac{1}{4}\right)$ and $0<r<1/20$, 
 \begin{equation*}
 \left\{ \Psi(u,Q_{3/4}) \leq r^{-1} \delta_\alpha \right\} \cap Q_r(y,s_0) \neq \emptyset 
\quad \text{ implies that } \quad u\in C^{2,\alpha}(Q_r(y,s_0 - r^2)).
 \end{equation*}
\end{lemma}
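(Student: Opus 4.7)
The plan is to apply the flat regularity result (Proposition~\ref{flatreg_adapted_par}) to a suitable rescaling of $u-P$, where $P$ is the quadratic polynomial furnished by the smallness of $\Psi$ at a well-chosen base point.

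First, I would fix a point $(x_0,t_0)\in \{\Psi(u,Q_{3/4})\le r^{-1}\delta_\alpha\}\cap Q_r(y,s_0)$ and let $(b,p,M)\in\R\times\R^d\times\mathbb{S}_d$ be a triple realizing the bound, so that
\[
P(z,\tau):=u(x_0,t_0)+p\cdot(z-x_0)+b(\tau-t_0)+\tfrac{1}{2}(z-x_0)\cdot M(z-x_0)
\]
satisfies $|u(z,\tau)-P(z,\tau)|\le \tfrac{1}{6}r^{-1}\delta_\alpha(|z-x_0|^3+(t_0-\tau)^{3/2})$ for every $(z,\tau)\in Q_{3/4}$ with $\tau\le t_0$. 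The first crucial step is to establish $b+F(M)=0$. I would prove this by testing the viscosity sub- and supersolution property of $u$ at $(x_0,t_0)$ against the $C^{2,1}$ test functions
\[
\phi^\pm(z,\tau):=P(z,\tau)\pm\tfrac{1}{6}r^{-1}\delta_\alpha\bigl(|z-x_0|^3+(t_0-\tau)^{3/2}\bigr),
\]
which by the cubic estimate touch $u$ from above and below respectively at $(x_0,t_0)$ within its parabolic past. Since $D^2|z-x_0|^3$ vanishes at $z=x_0$ and the $\tau$-derivative of $(t_0-\tau)^{3/2}$ vanishes as $\tau\to t_0^-$, the parabolic viscosity tests at $(x_0,t_0)$ collapse to $b+F(M)\le 0$ and $b+F(M)\ge 0$.

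Setting $\bar F(N):=F(N+M)-F(M)$, which inherits \ref{f1}--\ref{f2} with the same ellipticity constants and modulus $\omega$ and satisfies $\bar F(0)=0$, the previous step implies that $v:=u-P$ solves $\partial_\tau v+\bar F(D^2 v)=0$ in the viscosity sense. To fit the hypotheses of Proposition~\ref{flatreg_adapted_par} I would then define
\[
\tilde v(\xi,\eta):=\frac{v(y+2r\xi,\,s_0-r^2+4r^2\eta)}{4r^2},
\]
which solves the same equation (with the same $\bar F$) on the unit parabolic cylinder $Q_1=B_1\times(-1,0)$. In original coordinates $Q_1$ corresponds to $B_{2r}(y)\times(s_0-5r^2,\,s_0-r^2)$; using $r<1/20$ and $(y,s_0)\in Q_{1/2}(0,-1/4)$, this cylinder is contained in $Q_{3/4}\cap\{\tau<t_0\}$, so the cubic error estimate yields $\sup_{Q_1}|\tilde v|\le C\delta_\alpha$ for a universal constant $C$.

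Taking $\delta_\alpha$ so small that $C\delta_\alpha\le\delta_0$, where $\delta_0=\delta_0(d,\lambda,\Lambda,\omega,\alpha)$ is the constant from Proposition~\ref{flatreg_adapted_par} applied to $\bar F$ and $\alpha$, gives $\tilde v\in C^{2,\alpha}(Q_{1/2})$. The image of $Q_{1/2}$ under the rescaling is exactly $Q_r(y,s_0-r^2)$, so unscaling yields $v\in C^{2,\alpha}(Q_r(y,s_0-r^2))$, and since $P$ is a polynomial this gives the desired $u\in C^{2,\alpha}(Q_r(y,s_0-r^2))$. The main obstacle is the viscosity argument establishing $b+F(M)=0$: one must check that the one-sided $\phi^\pm$ can be admitted as parabolic test functions at $(x_0,t_0)$ (e.g.\ by symmetrizing $(t_0-\tau)^{3/2}$ into $|t_0-\tau|^{3/2}$, which preserves the $C^1$-in-time matching, and using continuity of $u$ to absorb the tiny discrepancy for $\tau>t_0$ via a harmless perturbation). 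The remainder is routine scaling bookkeeping.
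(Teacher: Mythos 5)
Your proof is correct and follows essentially the same route as the paper's: extract the quadratic approximation from the smallness of $\Psi$, observe $b+F(M)=0$, rescale, and invoke Proposition~\ref{flatreg_adapted_par}. The only differences are cosmetic — you center the rescaling at $(y,s_0-r^2)$ with a factor of $2$ and land exactly on $Q_r(y,s_0-r^2)$, while the paper rescales about the point where $\Psi$ is small by a factor of $4$ and then uses the inclusion $Q_r(y,s_0-r^2)\subseteq Q_{2r}(z,s)$; and you spell out the viscosity test for $b+F(M)=0$ (which the paper states as ``clear''), correctly noting that $|z-x_0|^3$ and $(t_0-\tau)^{3/2}$ have vanishing Hessian, respectively vanishing time-derivative, at the contact point.
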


\begin{proof}Let $\delta>0$ to be adjusted. 
Suppose that $0<r<1/20$, $(y,s_0)\in Q_{1/2}\left(0, -\tfrac{1}{4} \right)$ and $(z,s)\in Q_r(y,s_0)$ is such that
\begin{equation*}
\Psi(u,Q_{3/4})(z,s) \leq r^{-1} \delta.
\end{equation*}
Then there exist $b\in \R$, $p\in \R^d$ and $M\in \mathbb{M}_d$ such that, for every $(x,t)\in Q_{3/4}$ such that $t\leq s$, 
\begin{multline}\label{ineq_Upsi_preg}
|u(x,t)  - u(z,s) -  p\cdot (x-z) -  b(t-s) - \tfrac{1}{2} (x-z) \cdot M (x-z) | \\
\leq \tfrac{1}{6} r^{-1}  \delta  (|x-z|^3 + |s-t|^{3/2}). 
\end{multline}
Replacing $M$ by $\frac{1}{2} (M+M^\top)$, we may assume that $M\in \mathbb{S}_d$. 
Since $u$ is a viscosity solution of \eqref{eq_para_unif_ell_with_g_0}, it is clear that
\begin{equation*}
b+F(M)= 0. 
\end{equation*}
For $(x,\tau)\in Q_1$, define 
\begin{equation*}
 v(x, \tau):=\frac{1}{16r^2} 
 \left( u(z+4rx,s+ 16 r^2 \tau) - u(z,s) - 4r p\cdot x -16 b r^2 \tau - 8 r^2 x\cdot Mx  \right). 
\end{equation*}
Noticing that $(z+4rx,s+ 16 r^2 \tau)\in Q_{3/4}$ for $(x,\tau)\in Q_1$, the inequality \eqref{ineq_Upsi_preg} implies that 
\begin{equation*}
\sup_{Q_1} |v(x,\tau)|\leq \frac{4}{3}\delta. 
\end{equation*}
Define the operator $\widetilde F(N):=b+F(N+M)$, and observe $\widetilde F$ satisfies \ref{f1} and \ref{f2}, with the same ellipticity constants $\lambda$, $\Lambda$ and modulus $\omega$,  
and $\tilde F(0)=b + F(M)=0$. It is clear that $v$ is a solution of 
\begin{equation*}
 \partial_tv+\tilde F(D^2v)=0, \quad \mbox{in } Q_1.
\end{equation*}
Let $\delta_0>0$ be the universal constant in Proposition~\ref{flatreg_adapted_par}, which also depends on $\alpha$. 
Suppose that 
\begin{equation*}
 \delta <\frac{3}{4} \delta_0.
\end{equation*}
Then Proposition~\ref{flatreg_adapted_par} yields that $v \in C^{2,\alpha}(Q_{1/2})$, from which we deduce that $u \in C^{2,\alpha}(Q_{2r}(z,s))$. 
Since $Q_r(y,s_0 - r^2) \subseteq Q_{2r}(z,s)$, the proof is complete. 
\end{proof}

We now give the proof of the first main result. 

\begin{proof}[Proof of Theorem \ref{partpar_reg_th}]
We assume without loss of generality that $F(0)=0$. 
By a standard covering argument, we may fix $0<\alpha<1$ and assume that 
$\Omega =Q_1$, $u\in C(Q_1)$ is bounded, and it suffices to show that
\begin{equation*}
 u \in C^{2, \alpha}(V \setminus \Sigma)
\qquad \text{with} \qquad
V := B_{9/20} \times \left(-\frac{1}{2}, -\frac{1}{4} -\frac{3}{800}\right).
\end{equation*}
for a set $\Sigma \subseteq \ol V$ with  $ \mathcal{H}_\text{par}(\Sigma) \leq d +2 - \eps$.
Since, for every $\s>0$, the operator $F_\s (M):= \s^{-1} F(\s M)$ satisfies 
both \ref{f1} and \ref{f2} with the same constants $\lambda$ and $\Lambda$ but a different modulus $\omega$ 
and the constant $\eps$ that we obtain does not depend on $\omega$, 
we may therefore assume without loss of generality that $\sup_{Q_1}|u|\leq 1$. Let $\Sigma \subseteq V$ denote the set
\begin{equation*}
 \Sigma: =\left\{(x,s)\in V : u\notin C^{2,\alpha}\left(Q_r \left(x,s +\tfrac{1}{2} r^2 \right)\right) \text{ for every }r>0\right\}.
\end{equation*}
Notice that $\Sigma$ is closed, and thus compact. 
Fix $0<r<1/20$. According to the Vitali covering theorem for parabolic cylinders~\cite[Lemma~7.8]{lieberman}, 
there exists a finite collection $\{Q_r(x_i,s_i+\tfrac{1}{2}r^2)\}_{1\leq i \leq m}$ of disjoint parabolic cylinders of radius $r$, 
with centers $(x_i,s_i) \in \Sigma$, such that
\begin{equation*}
 \Sigma \subseteq \bigcup_{i=1}^m Q_{5r}\left(x_i,s_i+\tfrac{25}{2}r^2 \right).
\end{equation*}
Since $(x_i,s_i) \in \Sigma$, according to Lemma \ref{lemma_flat_reg}  there exists a constant $\delta$ such that
\begin{equation*}
 \Psi(u,Q_{3/4})(y,\tau)>r^{-1}\delta \quad \text{ for every } (y,\tau)\in \bigcup_{i=1}^m Q_r\left(x_i,s_i+\tfrac{3}{2}r^2\right). 
\end{equation*}
Applying Theorem~\ref{W3_eps_para} 
to $Q_r\left(x_i,s_i+\tfrac{3}{2}r^2\right) \subseteq Q_{1/2}\left(0, - \tfrac{1}{4}\right)$, we deduce that
\begin{equation*}
 mr^{d+2} = m \frac{|Q_r|}{|Q_1|} \leq C\left(r^{-1}\delta\right)^{-\eps}
\end{equation*}
for some universal constants $C,\eps>0$. Therefore, 
\begin{equation*}
 \sum_{i=1}^m   \left(5r\right)^{d+2-\eps} \leq 5^{d +2-\eps}m r^{d+2-\eps} \leq 5^{d +2-\eps} C  \delta^{-\eps} <+\infty.  
\end{equation*}
In particular, this implies that
\begin{equation*}
 \mathcal{H}_\text{par}(\Sigma) \leq d+2-\eps.
\end{equation*} 
By using \eqref{link_para_classical_hausdorff_measure}, we get 
\begin{equation*}
 \mathcal{H}(\Sigma) \leq \frac{1}{2}\left(d +d+2-\eps \right)=d +1 - \frac{\eps}{2}. \qedhere
\end{equation*} 
\end{proof}

 \section{Parabolic $W^{2,\eps}$  estimate}  \label{w_2_eps_par}

In this section, we state and prove the parabolic $W^{2,\eps}$  estimate associated to~\eqref{eq_para_unif_ell}. It will be useful to prove Theorem~\ref{W3_eps_para} in Section~\ref{part_par_reg_proof_s}. This result is essentially well-known but we give the argument here for the sake of completeness. 

The key argument to prove Proposition~\ref{W_2eps_para} relies on a measure estimate on small parabolic balls stated in Lemma~\ref{est_measure_small_pb_bd}. Its proof consists in a suitable parabolic Aleksandrov-Bakelman-Pucci~(ABP) inequality and a comparison principle achieved with a barrier function. Then we derive an induction relation in Lemma~\ref{measure_estimate_macroscopic_ball} by Lemma~\ref{vitali_par_balls}. Finally some classical arguments permit to obtain Proposition~\ref{W_2eps_para}.

\smallskip

Let $\Omega \subseteq \R^{d +1}$ and $u, -v \in \lsc(\Omega)$. We say that $v$ \emph{touches $u$  
from below at~$(x,t)\in \Omega$} if
\begin{equation*}
 \begin{cases}
 v(z,\tau) \leq u(z,\tau) , & \text{for } (z,\tau)\in \Omega \text{ and } \tau \leq t,  \\ 
 v(x,t) = u(x,t) .
 \end{cases}
\end{equation*}
We say that \emph{$u$ touches $v$ from above at~$(x,t)\in \Omega$} if $v$ touches $u$ at~$(x,t)\in \Omega$ from below.
Let $(y,s) \in \R^{d+1}$. A polynomial $P$ is called a \emph{concave paraboloid} of opening~$\kappa>0$ if 
\begin{equation*}
 P_{y,s; \kappa}(z,\tau)=  - \tfrac{\kappa}{2} |z-y|^2 + \kappa (\tau -s). 
\end{equation*}
Similarly, a polynomial $P$ is called a \emph{convex paraboloid} of opening $\kappa>0$ if 
\begin{equation*}
 P_{y,s; \kappa}(z,\tau)=  \tfrac{\kappa}{2} |z-y|^2 - \kappa (\tau -s).
\end{equation*}

To state the estimate, we require some notation. 
Given a domain $\Omega \subseteq \R^{d+1}$, and a function~$u\in \lsc(\Omega)$, 
define the quantity
\begin{multline*}
\udl \Theta(x,t)  = \udl \Theta (u,\Omega) (x,t) 
:= \inf \left\{A \geq 0 : \exists p\in \R^d \  \text{ s.t. } \  \forall (y,s)\in \Omega, \, s \leq t , \right. \\
\left. u(y,s) \geq u(x,t)  +p\cdot (y-x) - A  \left( \tfrac{1}{2} |x-y|^2 + (t-s) \right)  \right\} .
\end{multline*}
 Similarly, for~$u\in \usc(\Omega)$, 
\begin{multline*}
\ol \Theta(x,t) = \ol \Theta (u,\Omega) (x,t) 
 := \inf \left\{A \geq 0 : \exists p\in \R^d \  \text{ s.t. } \  \forall (y,s)\in \Omega ,  \, s \leq t,  \right. \\
 \left.   u(y,s) \leq u(x,t) +p\cdot (y-x) + A  \left( \tfrac{1}{2} |x-y|^2 + (t-s) \right)   \right\} , 
\end{multline*}
and, for $u\in C(\Omega)$,  
\begin{equation*}
\Theta(x,t)=\Theta (u,\Omega)(x,t) :=\max \left\{ \udl \Theta (u,\Omega)(x,t), \ol \Theta (u,\Omega)(x,t) \right\}. 
\end{equation*}
The quantity $\udl \Theta(x,t)$ is the minimum curvature of any paraboloid that touches $u$ from below at~$(x,t)$. 
If $u$ cannot be touched from below at $(x,t)$ by any paraboloid, then $\udl \Theta(x,t)=+\infty$. 
A similar statement holds for $\ol \Theta(x,t)$, where we touch from above instead. 
Moreover, a function $u$ is~$C^{1,1}$ on a closed set $\Gamma \subseteq \Omega$ if and only if~$u$ has tangent paraboloids 
from above and below with respect to $\Omega$ at each point of~$\Gamma$.

The form of the $W^{2,\eps}$ estimate we need is given by the following proposition. 
\begin{prop}\label{W_2eps_para}
 If $u \in \lsc(Q_1)$ and $L\geq 0$ satisfy the inequality
 \begin{equation*}
  \partial_tu+ \mathcal{P}^+_{\lambda, \Lambda} (D^2u) \geq  - L \quad \text{in }Q_1,
 \end{equation*}
then for all $\kappa> 0$, 
\begin{equation*} 
|\{ (x,t)\in Q_{1/2}\left( 0, -\tfrac{1}{4}\right) : \udl \Theta (u,Q_1)(x,t)>   \kappa \}| 
\leq  C \left(\frac{\kappa}{\sup_{Q_1} |u|+ L } \right)^{-\eps}, 
 \end{equation*}
 where the constants $C$ and $\eps>0$ are universal.
\end{prop}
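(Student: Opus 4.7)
After rescaling $u \mapsto (\sup_{Q_1}|u|+L)^{-1}u$, the problem reduces to proving $|\{\udl\Theta(u,Q_1) > \kappa\} \cap Q_{1/2}(0,-1/4)| \le C\kappa^{-\eps}$ under the normalization $\sup_{Q_1}|u|+L \le 1$. I would follow the Caffarelli--Cabré scheme adapted to the parabolic setting, organized in three stages: a small-parabolic-ball measure estimate (the role of Lemma~\ref{est_measure_small_pb_bd}), a Vitali iteration at the macroscopic scale via Lemma~\ref{vitali_par_balls} (Lemma~\ref{measure_estimate_macroscopic_ball}), and the standard translation of geometric distributional decay into the $L^\eps$ bound.

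For the small-ball measure estimate I would prove: there exist universal $M>1$, $\sigma\in(0,1)$ and $\theta\ge 3/4$ such that whenever $u$ is a $\mathcal{P}^+_{\lambda,\Lambda}$-supersolution with right-hand side $\ge -1$ on $\widehat G^{}_{\theta,h}(x_0,t_0)$, nonnegative on its parabolic boundary, and satisfying $\inf_{G^{}_{\theta,h}(x_0,t_0)} u \le 1$, then
\[ \bigl|\{\udl\Theta(u,\widehat G)\le M\}\cap \widehat G^{}_{\theta,h}(x_0,t_0)\bigr| \;\ge\; \sigma\bigl|\widehat G^{}_{\theta,h}(x_0,t_0)\bigr|. \]
The key ingredient is a parabolic barrier $\varphi$: an explicit $(x,t)$-dependent function that is a classical supersolution of $\partial_t\varphi + \mathcal{P}^+_{\lambda,\Lambda}(D^2\varphi) \le 0$ on the annular region $\widehat G \setminus G$, vanishing on $\partial_p \widehat G$ and bounded below by a positive constant on $G$. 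This is the parabolic analogue of Caffarelli's radial $|x|^{-\gamma}-1$ profile, modified by a time-dependent prefactor matched to the anisotropic scaling of $G^{}_{\theta,h}$. Applying a parabolic ABP inequality (Tso--Wang) to $u-\varphi$, which is a Pucci-minus subsolution with controlled right-hand side, nonnegative on $\partial_p\widehat G$ but negative at some interior point, produces a set of definite measure on which $u-\varphi$ admits an affine tangent from below; combining this affine tangent with the quadratic information carried by $\varphi$ yields $\udl\Theta(u) \le M$ at those points.

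Setting $A_\kappa := \{\udl\Theta(u,Q_1) > \kappa\} \cap Q_{1/2}(0,-1/4)$, the Vitali iteration then proceeds by covering $A_{M\kappa}$ by parabolic balls $G^{}_{\theta,h_i}(x_i,t_i)$ centered in $A_{M\kappa}$ and selected by a stopping-time argument so that each enlarged ball $\widehat G^{}_{\theta,h_i}$ contains at least one point where $\udl\Theta \le \kappa$; the small-ball estimate applied to the suitably rescaled restriction of $u$ then delivers $|A_{M\kappa} \cap \widehat G^{}_{\theta,h_i}| \le (1-\sigma)|\widehat G^{}_{\theta,h_i}|$, and summing via Lemma~\ref{vitali_par_balls} together with the universal ratio~\eqref{ratio_pb_vitali} yields $|A_{M\kappa}| \le (1-\sigma')|A_\kappa|$ for a universal $\sigma'>0$. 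Iterating gives $|A_{M^k}| \le (1-\sigma')^k|Q_{1/2}(0,-1/4)|$, and interpolating over dyadic thresholds finishes the proof with $\eps = -\log(1-\sigma')/\log M$.

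The principal obstacle is the construction of the parabolic barrier $\varphi$ used in the small-ball estimate: in the elliptic setting Caffarelli's radial profile suffices, but here the anisotropic parabolic scaling of $G^{}_{\theta,h}$ forces a genuinely $(x,t)$-dependent ansatz, and verifying the $\mathcal{P}^+_{\lambda,\Lambda}$-supersolution inequality on the annular region $\widehat G \setminus G$ requires an explicit eigenvalue computation of $D^2\varphi$ together with a careful balancing of the time-derivative term against the Pucci operator applied to the spatial profile.
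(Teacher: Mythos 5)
Your overall plan is sound and shares the same architecture as the paper's proof: a barrier on parabolic balls, an ABP-type measure estimate, a Vitali iteration over the enlargement $\widehat G$, and the standard conversion of geometric decay into the $L^\eps$ bound with $\eps = -\log(1-\sigma')/\log M$. The geometric objects ($G_{\theta,h}$, $\widehat G_{\theta,h}$, the ratio $\eta$ from~\eqref{ratio_pb_vitali}) and the need for an $(x,t)$-dependent barrier are all correctly identified.

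The route, however, differs in two substantive ways. First, you work directly with the sublevel sets $\{\udl\Theta(u,\widehat G)\le M\}$ and propose applying the Tso--Wang parabolic ABP to $u-\varphi$. The paper instead introduces Savin-style sets $A_\kappa$ (definition~\eqref{def_A_op}): points where a concave paraboloid of opening $\kappa$ passing through $\inf_{Q_1}u$ touches $u$ from below \emph{globally} in $Q_1$. This encoding is not cosmetic; it is what makes the iteration close. Your small-ball lemma controls $\udl\Theta(u,\widehat G)$ (a local quantity), but the Proposition concerns $\udl\Theta(u,Q_1)$ (global). The bridge is a composition of paraboloids: the global paraboloid guaranteed by the nearby point with $\udl\Theta\le\kappa$ must be combined with the new local paraboloid of opening $M$ to produce a global paraboloid of opening $M\kappa$. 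Your phrase ``suitably rescaled restriction of $u$'' buries this step, and it is precisely the delicate part: subtracting the old paraboloid does not generally make $u$ nonnegative on $\partial_p\widehat G$, so the CC-style normalization (``$u\ge 0$ on $\partial_p$, $\inf u\le 1$'') is not directly available. The paper's Lemma~\ref{comparison_Aop_Theta} and Step~3 of Lemma~\ref{est_measure_small_pb_bd} do this composition explicitly by completing the square, which is why the $A_\kappa$ framework is used. Second, in place of the classical ABP applied to $u-\varphi$, the paper proves a bespoke \emph{measure-transport} lemma (Lemma~\ref{lemma_abp_measure_inf}): an area-formula argument comparing the Lebesgue measure of the set of paraboloid vertices $(y,s)$ with that of the set of contact points $(x,t)$, after an inf-convolution regularization. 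This replaces the convex-envelope contact-set argument and is better adapted to the $A_\kappa$ structure. Your outline would likely succeed if the composition step were carried out carefully, but as written it is a genuine gap, and the paper's $A_\kappa$ machinery is exactly what fills it.
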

We emphasize here that $\udl \Theta (u,\Omega)$ is defined in terms of quadratic polynomials that touch $u$ at~$(x,t)$ and stay below $u$ in the domain $\Omega \cap \{(y,s) : s\leq t\}$, which is full in space and restricted to global times less than $t$.

Instead of working with the sets $\{\udl \Theta\leq \kappa \kappa\}$, we are going to consider some new sets $A_\kappa$ for $\kappa>0$. 
We are inspired from the elliptic definition introduced by Savin~\cite{savin_perturb_elliptic} and recently 
also used by Armstrong and Smart in~\cite{scott_charlie_regstoch_out_unif}. In the parabolic setting, define, for every $\kappa>0$, 
\begin{multline} \label{def_A_op}
 A_\kappa:= \bigg\{ (x,t)\in Q_1 : \exists (y,s) \in B_1 \times (-1,t] \ \text{ s.t.} \    
 u(x,t) - \inf_{Q_1} u +\kappa \left(\tfrac{1}{2} |x-y|^2- (t-s) \right)  \\
=\inf_{\substack{(z,\tau) \in Q_1,\\ \tau \leq t} } \left( u(z,\tau) - \inf_{Q_1} u  
+\kappa \left(\tfrac{1}{2} |z-y|^2-(\tau-s) \right) \right) =0  \bigg\}.
\end{multline}
It is important to notice that $s$ takes part in the definition of $A_\kappa$ only to adjust the infimum to be equal to zero. Moreover, the definition of $A_\kappa$ given above is adapted to the domain $Q_1$. It is clear how to change this definition of $A_\kappa$ to deal with more general domains.

The next lemma gathers some properties about the sets $A_{\kappa}$. In particular, the link between $A_\kappa$ and $\udl \Theta$ is precised. 

\begin{lemma}\label{comparison_Aop_Theta} 
Let $u\in \lsc(\Omega)$, $\kappa>0$ and $A_\kappa$ be defined by \eqref{def_A_op}. Then we have
\begin{equation*}
 A_\kappa \subseteq \left\{(x,t) \in Q_1 : \udl \Theta(u,Q_1)(x,t) \leq \kappa  \right\}.
\end{equation*}
 Moreover, for all  $0<\kappa_1 \leq \kappa$, we have $A_{\kappa_1} \subseteq A_{\kappa}$.
\end{lemma}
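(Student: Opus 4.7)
My plan is to verify the two inclusions in turn, both by direct algebraic manipulation of the defining inequality~\eqref{def_A_op}.

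For the first inclusion $A_\kappa \subseteq \{\udl\Theta(u,Q_1) \leq \kappa\}$, I would simply unpack the definition. If $(x,t) \in A_\kappa$ with witness $(y,s) \in B_1 \times (-1,t]$, then for all $(z,\tau) \in Q_1$ with $\tau \leq t$, subtracting the zero-value at $(x,t)$ gives $u(z,\tau) - u(x,t) \geq \tfrac{\kappa}{2}(|x-y|^2 - |z-y|^2) + \kappa(\tau - t)$. The elementary identity $\tfrac{1}{2}(|x-y|^2 - |z-y|^2) = (y-x)\cdot(z-x) - \tfrac{1}{2}|z-x|^2$ rewrites this precisely as the inequality required by the definition of $\udl\Theta$, with linear part $p := \kappa(y-x)$, and hence $\udl\Theta(u,Q_1)(x,t) \leq \kappa$.

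For the second inclusion $A_{\kappa_1} \subseteq A_\kappa$ (with $0<\kappa_1 \leq \kappa$), the idea is to exhibit a new witness by moving the vertex closer to $x$. Given a witness $(y_1,s_1)$ for $(x,t) \in A_{\kappa_1}$, write $P_1(z,\tau) := -\tfrac{\kappa_1}{2}|z-y_1|^2 + \kappa_1(\tau-s_1) + \inf_{Q_1}u$. I would propose the new vertex
\begin{equation*}
y := \bigl(1-\tfrac{\kappa_1}{\kappa}\bigr)x + \tfrac{\kappa_1}{\kappa}\, y_1,
\end{equation*}
with $s$ determined by the normalization $P(x,t)=u(x,t)$, where $P(z,\tau) := -\tfrac{\kappa}{2}|z-y|^2 + \kappa(\tau-s) + \inf_{Q_1}u$. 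The force of this choice is that the spatial gradients at $(x,t)$ coincide, $\kappa(y-x) = \kappa_1(y_1-x)$; together with $P(x,t) = P_1(x,t)$, a direct expansion around $(x,t)$ yields the clean identity
\begin{equation*}
P(z,\tau) - P_1(z,\tau) \;=\; -\tfrac{\kappa-\kappa_1}{2}|z-x|^2 + (\kappa-\kappa_1)(\tau-t),
\end{equation*}
which is non-positive on $\{\tau \leq t\}$. Hence $P \leq P_1 \leq u$ there, with equality at $(x,t)$, so the infimum condition in~\eqref{def_A_op} holds for $(y,s)$.

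The only remaining and genuinely subtle step is the admissibility $(y,s) \in B_1 \times (-1,t]$. Membership $y \in B_1$ is immediate since $y$ is a convex combination of $x, y_1 \in B_1$. For the time coordinate, the equation $P_1(x,t)=u(x,t)$ combined with the trivial bound $u(x,t) \geq \inf_{Q_1} u$ produces the a~priori estimate $t-s_1 \geq \tfrac{1}{2}|x-y_1|^2$. Solving $P(x,t)=u(x,t)$ for $s$ then gives
\begin{equation*}
t-s \;=\; \tfrac{\kappa_1}{\kappa}(t-s_1) - \tfrac{\kappa_1(\kappa-\kappa_1)}{2\kappa^2}|x-y_1|^2,
\end{equation*}
and combining this with the lower bound on $t-s_1$ yields $0 \leq t-s \leq t-s_1$, whence $s \in [s_1, t] \subseteq (-1,t]$ as required.
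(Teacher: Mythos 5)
Your proof is correct and follows the same overall architecture as the paper's. The first inclusion is identical: unpack~\eqref{def_A_op}, apply the polarization identity, and take $p = \kappa(y-x)$. For the second inclusion, you and the paper both choose the same new vertex $y = (1-\kappa_1/\kappa)x + (\kappa_1/\kappa)y_1$ and determine $s$ by the normalization $P(x,t)=u(x,t)$. The only real difference is in how the key decomposition is justified. The paper defines the correction polynomial $P = \kappa(\cdots) - \kappa_1(\cdots)$, explicitly completes the square to write it as $(\kappa-\kappa_1)\bigl(\tfrac{1}{2}|z-\ol z|^2 - (\tau-\ol\tau)\bigr)$, and solves $(\ol z,\ol\tau)=(x_1,t_1)$ for $(y,s)$. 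You instead obtain the identity
\begin{equation*}
P(z,\tau) - P_1(z,\tau) = -\tfrac{\kappa-\kappa_1}{2}|z-x|^2 + (\kappa-\kappa_1)(\tau - t)
\end{equation*}
in one step, by observing that both sides are polynomials whose quadratic coefficients match by construction, and whose spatial gradients and values agree at $(x,t)$ (the gradient-matching $\kappa(y-x)=\kappa_1(y_1-x)$ being an immediate consequence of the choice of $y$). This is a cleaner and less error-prone route to the same conclusion; in fact the paper's explicit formula for $\ol\tau$ carries a sign error (the $\tfrac{1}{2}\tfrac{\kappa\kappa_1}{\kappa-\kappa_1}|y-y_1|^2$ term should enter with a minus sign, and correspondingly the $+\tfrac{\kappa_1(\kappa-\kappa_1)}{2\kappa^2}|x_1-y_1|^2$ term in your formula for $t-s$ is the correct one), though this typo does not affect the paper's subsequent verification that $s\in(s_1,t_1]$. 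Your admissibility check, using $t - s_1 \geq \tfrac12|x-y_1|^2$ from the constraint $u(x,t)\geq \inf_{Q_1}u$, is the same estimate the paper invokes.
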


\begin{proof}
Let $(x,t) \in A_\kappa$. Then $(x,t)\in Q_1$ and there exists $y \in B_1$ 
such that for all $(z,\tau) \in Q_1$, $\tau \leq t$, we have
\begin{equation*}
 u(z,\tau)+\kappa \left(\tfrac{1}{2} |z-y|^2-\tau \right) \geq   u(x,t)+\kappa \left(\tfrac{1}{2} |x-y|^2- t \right) .
\end{equation*}
After rearranging the terms, we get
\begin{equation*}
 u(z,\tau) \geq   u(x,t)+\kappa \left(\tfrac{1}{2} (|x-y|^2 -|z-y|^2 ) - (t - \tau) \right) .
\end{equation*}
 An algebraic manipulation yields $ |x-y|^2 -|z-y|^2 = - |x-z|^2  + 2 (y-x) \cdot (z-x)$.
If we choose $p= \kappa (y-x) \in \R^d$, the last inequality can be written in the form
\begin{equation*}
 u(z,\tau) \geq   u(x,t)+ p \cdot (z-x)  - \kappa \left(\tfrac{1}{2} (|x-z|^2    + (t - \tau) \right),
\end{equation*}
for all $(z,\tau) \in Q_1$, $\tau \leq t$, which  gives precisely $\udl \Theta(u,Q_1)(x,t) \leq \kappa$. \\
For the second assertion, let $(x_1,t_1) \in A_{\kappa_1}$ and $\kappa>\kappa_1$. Hence there exist $y_1 \in B_1$ and  $-1< s_1 \leq t_1$ such that
\begin{multline}\label{def_xt_Aop1}
u(x_1,t_1) - \inf_{Q_1} u +\kappa_1 \left(\tfrac{1}{2} |x_1-y_1|^2- (t_1-s_1) \right)  \\
=\inf_{\substack{(z,\tau) \in Q_1,\\ \tau \leq t} } \left( u(z,\tau) - \inf_{Q_1} u  
+\kappa_1 \left(\tfrac{1}{2} |z-y_1|^2-(\tau-s_1) \right) \right) =0 . 
\end{multline}
For $(y,s) \in \R^{d+1}$, let $P$ be the paraboloid given by 
\begin{equation}\label{def_P_correction}
 P(z,\tau):= \kappa \left(\tfrac{1}{2} |z-y|^2-(\tau-s) \right)
 - \kappa_1 \left(\tfrac{1}{2} |z-y_1|^2-(\tau-s_1) \right).
\end{equation}
Assume that we have shown there exists $(y,s) \in  B_1 \times (-1,t_1]$ such that, for all $(z,\tau)\in \R^{d+1}$,  
  \begin{equation} \label{form_P_special}
  P(z,\tau)=(\kappa - \kappa_1) \left(\tfrac{1}{2}  |z - x_1|^2 - (\tau- t_1)\right).
  \end{equation} 
For this particular choice, the decomposition 
\begin{multline*}  
 u(z,\tau) - \inf_{Q_1} u +\kappa \left(\tfrac{1}{2} |z-y|^2-(\tau-s) \right) \\
= u(z,\tau) - \inf_{Q_1} u +\kappa_1 \left(\tfrac{1}{2} |z-y_1|^2-(\tau-s_1) \right)  +P(z,\tau)
\end{multline*}
implies the result by using \eqref{def_xt_Aop1}, $P\geq 0 $ on $B_1 \times (-1, t_1]$ and $P(x_1,t_1)=0$ by~\eqref{form_P_special}.  To obtain the assertion, it remains to show~\eqref{form_P_special}.
By completing the square in~\eqref{def_P_correction}, the polynomial $P$ can be written in the form 
\begin{equation*}
P(z,\tau)=(\kappa - \kappa_1) \left(\tfrac{1}{2}  |z - \ol z(y,s)|^2 - (\tau- \ol \tau(y,s))\right),
\end{equation*}
with 
 \begin{equation*}
  \left\{   
     \begin{aligned}
 &\ol z(y,s) :=  \frac{1}{\kappa - \kappa_1} \left(\kappa y - \kappa_1 y_1 \right), \\
& \ol \tau(y,s) := \frac{1}{\kappa - \kappa_1}  \left( \kappa s - \kappa_1 s_1 +\frac{1}{2}\frac{\kappa \kappa_1}{\kappa - \kappa_1} |y-y_1|^2    \right).
 \end{aligned}
 \right.
\end{equation*}
We choose $(y,s)$ to impose the condition  $(\ol z(y,s), \ol \tau(y_1,s_1))=(x_1,t_1)$. This leads to select
\begin{equation*}
 y= \left(1 - \frac{\kappa_1}{\kappa}\right) x_1 +\frac{\kappa_1}{\kappa} y_1 
 \quad \text{and} \quad 
  s=t_1 - \frac{\kappa_1}{\kappa} \left( t_1 - s_1 + \frac{1}{2}  |x_1-y_1|^2 \left(1 -  \frac{\kappa_1}{\kappa}\right) \right).  
\end{equation*}
It is clear by convexity that $y \in B_1$.
To show that $s\in (s_1,t_1]$, notice $t_1 - s_1 -\frac{1}{2} |x_1-y_1|^2\geq 0$ by \eqref{def_xt_Aop1} and use $\kappa>\kappa_1$ in the expression above. 
\end{proof}

Now we recall the standard tool in the theory of viscosity solutions (see~\cite{user_s_guide} for further details).
We denote the infimal convolution of $u\in \lsc(Q_1)$ by 
\begin{equation*}
 u_\eps(x,t)= \inf_{(z,\tau)\in Q_1} \left( u(z,\tau)+\frac{2}{\eps} (|z-x|^2+(\tau - t)^2)   \right).
\end{equation*}
Moreover, if $f\in C(Q_1)$ and 
\begin{equation*}
 \partial_tu+\mathcal{P}^+_{\lambda, \Lambda}(D^2u) \geq f \quad \text{in }Q_1,
\end{equation*}
then there exist a sequence of functions $f_\eps \in C(Q_1)$ which converges locally uniformly to $f$ respectively, as $\eps \rightarrow 0$, 
such that $u_\eps$ satisfies
\begin{equation*} 
 \partial_t u+\mathcal{P}^+_{\lambda, \Lambda}(D^2u) \geq f_\eps \quad \text{in }Q_{1-r_\eps}(0,T_\eps), 
\end{equation*}
when $r_\eps \rightarrow 0$ and $T_\eps \rightarrow 0$ as $\eps \rightarrow 0$.
The function $u_\eps$ is more regular than $u$ and, in particular, is semiconcave. It is a good approximation to $u$
in the sense that $u_\eps \rightarrow u$ locally uniformly in $Q_1$ as~$\eps \rightarrow 0$.
For us, the main utility of these approximations is the semiconcavity of $u_\eps$. 
If $u_\eps$ can be touched from below by a smooth function~$\vphi$ at some point$(y,s)\in Q_1$, then $u_\eps$ is $C^{1,1}$
at $(y,s)$, with norm depending only on~$\eps$ and~$|D^2\vphi(y,s)|$ and $\partial_t\vphi(y,s)$.

The following lemma is the form of the ABP inequality we are going to use.
\begin{lemma}\label{lemma_abp_measure_inf}
 Assume that $L>0$ and $u \in \lsc(Q_1)$ satisfy
 \begin{equation*}
  \partial_t u + \mathcal{P}^+_{\lambda, \Lambda} \left( D^2u \right) \geq  - L \text{ in } Q_1.
 \end{equation*}
  Suppose that  $a>0$ and $V  \subseteq \R^{d+1}$ is compact such that,
 for each $(y,s) \in V$, there exists $(x,t)\in Q_1$ such that
 \begin{multline}\label{def_point_inf}
u(x,t) - \inf_{Q_1}u +\tfrac{a}{2}|x-y|^2 - a(t-s) \\
= \inf_{(z,\tau)\in Q_1, \tau \leq t} \left(u(z,\tau) - \inf_{Q_1}u +\tfrac{a}{2}|z-y|^2 - a (\tau -s) \right)=0.
 \end{multline}
Let $W:=\{(x,t) \in Q_1 : \eqref{def_point_inf} \text{ holds for }u\text{ for some }(y,s)\in V \}$. Then
 \begin{equation} \label{est_measure_vertices_touching}
    |V|  \leq \frac{1}{\lambda^d} \left( 1+\frac{L}{a} + \Lambda d \right)^{d+1} |W|. 
 \end{equation}
\end{lemma}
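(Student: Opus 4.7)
The plan is to construct a map $\Phi \colon W \to V$ and to bound $|V|$ via the area formula applied to $\Phi$. First I would regularize by the standard inf-convolution $u_\eps$, which is semiconcave, solves a nearby parabolic PDE with right-hand side $-L_\eps \to -L$, and has a contact set $W_\eps \to W$; it suffices to establish the estimate for $u_\eps$ and then pass to the limit. At a contact point $(x,t) \in W_\eps$ with associated vertex $(y,s) \in V$, the paraboloid touching $u_\eps$ from below on $\{\tau \leq t\}$ forces the pointwise derivatives
\begin{equation*}
Du_\eps(x,t) = a(y-x), \qquad D^2u_\eps(x,t) \geq -aI_d, \qquad \partial_t u_\eps(x,t) \leq a,
\end{equation*}
the last being one-sided because touching is only required for past times. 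The zero-minimum condition determines
\begin{equation*}
s = t - a^{-1}\bigl(u_\eps(x,t) - \inf u_\eps\bigr) - (2a^2)^{-1}|Du_\eps(x,t)|^2.
\end{equation*}

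Defining $\Phi(x,t) := (x + a^{-1}Du_\eps(x,t),\, s(x,t))$, one has $V \subseteq \Phi(W_\eps)$ by construction. A short row reduction (adding $a^{-1}\partial_j u_\eps$ times the $j$-th row to the last row of the Jacobian matrix) factors the determinant as
\begin{equation*}
\det D\Phi(x,t) = \bigl(1 - a^{-1}\partial_t u_\eps(x,t)\bigr)\,\det\bigl(I_d + a^{-1}D^2u_\eps(x,t)\bigr),
\end{equation*}
both factors being non-negative on $W_\eps$. Semiconcavity makes $\Phi$ Lipschitz on compact subsets of $W_\eps$, so the area formula gives $|V| \leq \int_{W_\eps} \det D\Phi$, reducing the lemma to a pointwise upper bound on $\det D\Phi$.

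For this pointwise estimate I would invoke the PDE $\partial_t u_\eps + \mathcal{P}^+_{\lambda,\Lambda}(D^2 u_\eps) \geq -L_\eps$. Since each eigenvalue $\mu_i$ of $D^2 u_\eps$ satisfies $\mu_i + a \geq 0$, the PDE rewrites as $\lambda \sum \mu_i^+ \leq L_\eps + \partial_t u_\eps + \Lambda ad$; a spatial AM-GM on the $d$ non-negative eigenvalues $1 + \mu_i/a$ (and the routine normalization $\lambda \leq 1$) then yields $\det(I_d + a^{-1}D^2u_\eps) \leq \lambda^{-d}(1 + T/d)^d$ with $T := L/a + \partial_t u_\eps/a + \Lambda d \geq 0$. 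Setting $S := 1 - a^{-1}\partial_t u_\eps \geq 0$, the crucial identity $T + S = 1 + L/a + \Lambda d$ causes the time-derivative terms to cancel, and a second AM-GM over the $d+1$ non-negative quantities ($d$ copies of $1 + T/d$ together with $S$) gives
\begin{equation*}
(1 + T/d)^d\, S \leq \left(\frac{d + T + S}{d+1}\right)^{d+1} = \left(1 + \frac{L/a + \Lambda d}{d+1}\right)^{d+1} \leq (1 + L/a + \Lambda d)^{d+1}.
\end{equation*}
Combining, $\det D\Phi \leq \lambda^{-d}(1 + L/a + \Lambda d)^{d+1}$; integrating over $W_\eps$ and sending $\eps \to 0$ concludes. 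The main obstacle is precisely this Jacobian estimate: coupling the spatial Pucci bound with the one-sided control $\partial_t u_\eps \leq a$ through two nested AM-GMs, and then exploiting the cancellation $T + S = 1 + L/a + \Lambda d$, is what extracts simultaneously the correct exponent $d+1$ and the sharp factor $\lambda^{-d}$.
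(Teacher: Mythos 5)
Your proof follows the paper's argument essentially step for step: the same infimal-convolution reduction to a semiconcave $u$, the same Lipschitz contact-point-to-vertex map whose Jacobian determinant factors as $\bigl(1 - a^{-1}\partial_t u\bigr)\det\bigl(I_d + a^{-1}D^2u\bigr)$, and the same area-formula conclusion. The only divergence is the final Jacobian bound: the paper bounds the two factors separately, via $0 \leq I_d + a^{-1}D^2u \leq \lambda^{-1}\bigl(1 + L/a + \Lambda d\bigr)I_d$ (eigenvalue $\leq$ trace) and $1 - a^{-1}\partial_t u \leq 1 + L/a + \Lambda d$, whereas your two nested applications of AM--GM (requiring the harmless normalization $\lambda \leq 1$) exploit the cancellation $T + S = 1 + L/a + \Lambda d$; both routes reach the stated constant $\lambda^{-d}(1 + L/a + \Lambda d)^{d+1}$.
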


\begin{proof} 
The proof is divided into two steps.
 \setcounter{step}{0}
 \vspace{-0.15cm}
\begin{step} \label{step_red_infconvol}
We make two reductions. 
First, by replacing $u$ by $u+\alpha \left(\frac{1}{2}|x|^2 -t \right)$ and $L$ by $L+C\alpha$ 
and letting $\alpha \rightarrow 0$, we may suppose that there exists $\eta>0$ such that
for every $(y,s)\in Q_1$, 
\begin{multline} \label{ramener_inf_intQ1}
 \min_{(z,\tau)\in Q_1 \setminus Q_{1-\eta}, \tau \leq t} \left( u(z,\tau) +\tfrac{a}{2}|z-y|^2 - a \tau  \right)  \\
 > \inf_{(z,\tau)\in Q_{1-\eta}, \tau \leq t} \left( u(z,\tau) +\tfrac{a}{2}|z-y|^2 - a \tau  \right). 
 \end{multline}
Next we make a reduction to the case that $u$ is semiconcave by an infimal convolution approximation. 
According to \eqref{ramener_inf_intQ1}, for every sufficiently small $\eps>0$, 
there exist $0<r_\eps<1$ and $-1<T_\eps<0 $ such that, 
for each $(y,s) \in V$, there exists $(x,t) \in Q_{1-r_\eps}(0,T_\eps)$ such that
 \begin{multline}\label{def_point_inf_eps}
u_\eps(x,t) - \inf_{Q_{1-r_\eps}(0,T_\eps)}u_\eps +\tfrac{a}{2}|x-y|^2 - a(t-s) \\
= \inf_{(z,\tau)\in Q_{1-r_\eps}(0,T_\eps),\tau \leq t}
\left( u(z,\tau)-\inf_{Q_{1-r_\eps}(0,T_\eps)} u_\eps +\tfrac{a}{2}|z-y|^2-a(\tau -s)\right)=0, 
 \end{multline}
 and 
\begin{equation*}
 \lim_{\eps \rightarrow 0} r_\eps =0 \qquad \mbox{and} \qquad  \lim_{\eps \rightarrow 0} T_\eps =0.  
\end{equation*}
Set 
\begin{equation*}
 W_\eps:=\{(x,t) \in Q_{1-r_\eps}(0,T_\eps) : \eqref{def_point_inf_eps} \mbox{ holds for }u_\eps \mbox{ for some }(y,s)\in V \}. 
\end{equation*}
Assume that we have shown that 
\begin{equation} \label{inclusion_W_Weps}
\limsup_{\eps \rightarrow 0} W_\eps \subseteq W, 
\quad \mbox{ where } \quad \limsup_{\eps \rightarrow 0} W_\eps := \bigcap_{\eps>0} \bigcup_{0<\delta \leq \eps} W_{\delta}  , 
\end{equation}
and for all $\eps>0$, 
\begin{equation}\label{est_vert_touching_eps}
   |V| \leq \frac{1}{\lambda^d} \left( 1+\frac{L}{a} + \Lambda d \right)^{d+1} |W_\eps|. 
\end{equation}
Then, since $\sup_{0<\eps<1/2}|W_\eps|\leq |Q_1|<+\infty$, the inequality
$ \limsup_{\eps \rightarrow 0}  \left|W_\eps \right|\leq \left| \limsup_{\eps \rightarrow 0} W_\eps \right|$ holds true 
and we have  
\begin{equation*}
|V| \underset{\eqref{est_vert_touching_eps}}{\leq} 
     \frac{1}{\lambda^d} \left(1+\frac{L}{a}+\Lambda d \right)^{d+1} \left|\limsup_{\eps \rightarrow 0}W_\eps \right| 
    \underset{\eqref{inclusion_W_Weps}}{\leq} 
     \frac{1}{\lambda^d} \left( 1+\frac{L}{a}+ \Lambda d \right)^{d+1} \left|W\right|. 
\end{equation*}
Thus we deduce \eqref{est_measure_vertices_touching}. 

To obtain the lemma, it remains to show the assertions~\eqref{inclusion_W_Weps} and \eqref{est_vert_touching_eps}. 

For $\eqref{inclusion_W_Weps}$, let $(x,t) \in \limsup_{\eps \rightarrow 0} W_\eps$.
Up to a subsequence, we can assume that $(x,t)\in W_\eps \cap Q_{1-r_\eps}(0,T_\eps)$ for all $\eps>0$. 
By the definition of~$W_\eps$, there exists $(y_\eps,s_\eps) \in V$ such that  
\begin{multline} \label{hyp_eps}
 u_\eps(x,t) - \inf_{Q_{1-r_\eps}(0,T_\eps)} u_\eps +\tfrac{a}{2}|x-y_\eps|^2 - a(t-s_\eps)  \\
 = \inf_{(z,\tau)\in Q_{1-r_\eps}(0,T_\eps), \tau \leq t}
 \left( u_\eps(z,\tau) - \inf_{Q_{1-r_\eps}(0,T_\eps)} u_\eps+\tfrac{a}{2}|z-y_\eps |^2 - a (\tau -s_\eps) \right)=0.
\end{multline}
Since $V$ is compact, up to extracting a subsequence, 
there exists $(y,s)\in V$ such that $(y_\eps,s_\eps) \rightarrow (y,s)$ as $\eps \rightarrow 0$. 
By convergence of $u_\eps$, we deduce that $\ds u_\eps(x,t)  \sublim_{\eps \rightarrow 0}  u(x,t)$. 
Moreover, since~$u_\eps \geq u$
and $u_\eps \rightarrow  u$ locally uniformly on $Q_1$, we have
\begin{equation*}
 \lim_{\eps\rightarrow 0 } \inf_{Q_{1-r_\eps}(0,T_\eps)} u_\eps =   \inf_{Q_1} u. 
\end{equation*}
Letting $\eps \rightarrow 0$ in \eqref{hyp_eps} yields $(x,t) \in W$. 
This completes the proof of \eqref{inclusion_W_Weps}, and therefore it remains to prove \eqref{est_vert_touching_eps}, 
that is, the statement of the lemma under the extra assumption that $u$ is semiconcave. 
\end{step} 
\begin{step}
Assuming $u$ is semiconcave, we give the proof of~\eqref{est_measure_vertices_touching}.
Select a Lebesgue-measurable function $\ol Z$ : $V \rightarrow Q_1$ such that 
the map 
\begin{equation*}
(z,t)\mapsto u(z,t) - \inf_{Q_1} u +a\left(\tfrac{1}{2} |z-y|^2 - (t-s) \right) 
\end{equation*}
 attains its infimum in $Q_1$ at $(z,t)=\ol Z(y,s)$ and this infimum is equal to zero. 
 For example, we may take~$\ol Z(y,s)$ to be the lexicographically least element of the (necessarily closed) set of infima.
 The function $u$ is~$C^{1,1}$ on~$A: =\ol Z(V)$ and $\ol Z$ has a Lipschitz inverse $\ol Y=(\ol y,\ol s)$ given by
 \begin{equation*}
\left\{
 \begin{aligned}
 \ol y(z,t) & =z  + \frac{1}{a}Du(z,t)  ,       \\
  \ol s(z,t)& =t  - \frac{1}{a}u(z,t) - \frac{1}{2} |z- \ol y(z,t)|^2.
  \end{aligned}
  \right.
 \end{equation*}
By Rademacher's theorem, $\ol Y$ is differentiable almost everywhere on $A$ for the $d+1$-dimensional Lebesgue measure.  
Then, by using the Lebesgue differentiation theorem, we see that $u$ is twice differentiable in space and differentiable in time 
at almost every point of $(z,t) \in A$ and, at such~$(z,t)$, we have
\begin{equation*}
 D^2u(z,t) \geq - a I_d \qquad \mbox{and} \qquad \partial_tu(z,t) \leq a. 
\end{equation*}
Thus, 
\begin{equation*}
 D \ol y (z,t)= I_d+\frac{1}{a} D^2u(z,t)  \geq 0
\end{equation*}
as well as
\begin{multline*}
  - \lambda \mbox{tr} (D \ol y(z,t)) = \mathcal{P}^+_{\lambda, \Lambda} (D\ol y(z,t)) 
  =\mathcal{P}^+_{\lambda, \Lambda} \left( I_d+\frac{1}{a} D^2u(z,t) \right)  \\
  \geq \frac{1}{a} \mathcal{P}^+_{\lambda, \Lambda} \left( D^2u(z,t) \right)
+ \mathcal{P}^-_{\lambda, \Lambda} \left( I_d \right)\geq  \frac{1}{a}(-L -\partial_tu(z,t)) - \Lambda d, 
\end{multline*}
and therefore
\begin{equation*}
 0\leq D \ol y (z,t) \leq \frac{1}{\lambda} \left( 1+ \frac{L}{a} + \Lambda d \right).
\end{equation*}
Similarly $D^2u (z,t) \geq -aI_d$ implies that
\begin{equation*}
 1-\frac{1}{a}\partial_tu(z,t)  \leq 1-\frac{1}{a}\left(-L -\mathcal{P}^+_{\lambda, \Lambda}(-aI_d) \right) = 1+\frac{L}{a}+\Lambda d.
\end{equation*}
An application of the area formula for Lipschitz functions gives
\begin{multline*}
|V|= \int_A |\det D\ol Y(z,t)|dz dt =\int_A \det \left(I+\frac{1}{a} D^2u(z,t) \right) \left(1-\frac{1}{a} \partial_tu(z,t) \right) dz dt  \\
 \leq \frac{1}{\lambda^d} \left(1+ \frac{L}{a} + \Lambda d \right)^d \left(1+\frac{L}{a}+\Lambda d \right)|A|,
\end{multline*}
from which we obtain the lemma, using that $A\subseteq W$. \qedhere 
\end{step}
\end{proof}

In our analysis, an important role will be played by the functions $\phi$ which we define by
\begin{equation*}
 \phi (x,t): = c (t+\tau)^{-b}  \left( e^{- a \frac{|x|^2}{ t+\tau}} -  e^{ - a \theta^{-1}}\right).
\end{equation*}
The parameters $a,b$ and $c$ will be adjusted with the uniform ellipticity constants and the opening~$\theta$ of the parabolic balls of the form $ G_{\theta,1+\tau}(0,-\tau)$
with $\tau>0$. More precisely, we will consider the choice given by
\begin{equation}\label{choice_a}
a = \max \left\{ \frac{1+d \Lambda \theta}{2\lambda}, \theta \right\}
\end{equation}
and 
\begin{equation}\label{choice_b}
b = \max \left\{ \frac{2d\Lambda a+1}{1-e^{\frac{1}{2}-a\theta^{-1}}} , \frac{4\lambda a-1}{e^{\frac{1}{2}-a\theta^{-1}}}\right\}
\end{equation}
and 
\begin{equation}\label{choice_c}
c= 2(1+\tau)^{b+1} e^{ a \theta^{-1}}.
\end{equation}
We next show that, with this choice of parameters, $\phi$ is a nonnegative subsolution in 
$G_{\theta, 1+\tau}(0, - \tau)$ which vanishes on the lateral boundary of $G_{\theta, 1+\tau}(0, - \tau)$
and is not too large initially. This plays the role of the ``bump function'' from the elliptic case 
\cite[Lemma 4.1]{caff_cabre}.

\begin{lemma}\label{fund_sol_heat_eq_bar}
Let $\tau>0$. For $a,b$ and $c$ given by \eqref{choice_a}--\eqref{choice_c}, the function $\phi$ satisfies
\begin{equation*}
\begin{cases}
 \partial_t \phi  +\mathcal{P}^+_{\lambda, \Lambda}(D^2\phi )\leq -1, &\mbox{on } G_{\theta,1+\tau}(0,-\tau) \cap \{(y,s):s> 0\}, \\
 \phi = 0             , & \mbox{on } \partial_p G_{\theta,1+\tau}(0,-\tau) \cap \{(y,s) : s>0 \}, \\ 
 \phi>0               , & \mbox{on }  G_{\theta,1+\tau}(0,-\tau) \setminus \partial_p G_{\theta,1+\tau}(0,-\tau), \\
 0\leq \phi \leq \beta, & \mbox{on } G_{\theta,1+\tau}(0,-\tau) \cap \{(y,s):s = 0\},  
 \end{cases}
\end{equation*}
with $\beta>0$ given by
\begin{equation*}
 \beta := 2\frac{(1+\tau)^{b+1}}{\tau^b} (e^{a\theta^{-1}} - 1).
\end{equation*}

\end{lemma}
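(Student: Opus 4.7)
The plan is to introduce the substitutions $u := t + \tau$ and $v := |x|^2/u$, so that $\phi = cu^{-b}(e^{-av} - e^{-a/\theta})$ and the defining condition $\theta|y|^2 \leq s+\tau$ of the parabolic ball reads exactly $v \leq 1/\theta$. The three bullets not involving the PDE then follow immediately from the structure of $\phi$: positivity $\phi \geq 0$ comes from $v \leq 1/\theta$, with equality precisely on the lateral boundary $\theta|y|^2 = s+\tau$ (the vertex $(0,-\tau)$ of the parabolic ball lies outside $\{s>0\}$ because $\tau > 0$); and at $s = 0$ one has
\begin{equation*}
0 \leq \phi(y,0) \leq c\tau^{-b}(1 - e^{-a/\theta}) = 2(1+\tau)^{b+1}\tau^{-b}(e^{a/\theta} - 1) = \beta.
\end{equation*}

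For the differential inequality I would compute directly
\begin{equation*}
\partial_t\phi = cu^{-b-1}e^{-av}\bigl(-b(1 - e^{av - a/\theta}) + av\bigr), \qquad
D^2\phi = cu^{-b-1}e^{-av}\Bigl(-2a\,I_d + \tfrac{4a^2}{u}\,x\otimes x\Bigr),
\end{equation*}
so that $D^2\phi$ has one radial eigenvalue equal to $2acu^{-b-1}e^{-av}(2av-1)$ and $d-1$ tangential eigenvalues equal to $-2acu^{-b-1}e^{-av}$. Splitting on the sign of $2av - 1$ and applying $\mathcal{P}^+_{\lambda,\Lambda}$, the sum $\partial_t\phi + \mathcal{P}^+_{\lambda,\Lambda}(D^2\phi)$ factors as $cu^{-b-1}e^{-av}\,X(v)$, where
\begin{equation*}
X(v) := -b\bigl(1 - e^{av - a/\theta}\bigr) + av(1 - 4a\gamma) + 2aK,
\end{equation*}
with $(\gamma, K) = (\lambda,\,\Lambda(d-1) + \lambda)$ in Case~1 ($v \geq 1/(2a)$) and $(\gamma, K) = (\Lambda,\,\Lambda d)$ in Case~2 ($v < 1/(2a)$); the two expressions agree at $v = 1/(2a)$.

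The target inequality $\partial_t\phi + \mathcal{P}^+_{\lambda,\Lambda}(D^2\phi) \leq -1$ will then follow from $X(v) \leq -1/2$ on $[0, 1/\theta]$ combined with the estimate $cu^{-b-1}e^{-av} \geq 2$, which is immediate from $c = 2(1+\tau)^{b+1}e^{a/\theta}$ together with $u \leq 1+\tau$ and $v \leq 1/\theta$. Since $X''(v) = a^2 b e^{av - a/\theta} > 0$, $X$ is convex, so its maximum on any subinterval is attained at an endpoint. The second lower bound $b \geq (4\lambda a - 1)/e^{1/2 - a/\theta}$ is engineered precisely so that $X'(1/(2a)) \geq 0$ in the Case~1 expression (making $X$ increasing on $[1/(2a), 1/\theta]$) and $X'(1/(2a)) \leq 0$ in the Case~2 expression (making $X$ decreasing on $[0, 1/(2a)]$). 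This localises the maximum over $[0, 1/\theta]$ to either $v=0$ or $v = 1/\theta$. The first lower bound $b \geq (2d\Lambda a + 1)/(1 - e^{1/2-a/\theta})$ controls $X(0)$, and the choice $a \geq \max\{(1+d\Lambda\theta)/(2\lambda),\, \theta\}$ (which gives $4a\lambda \geq 2$ and $a/\theta \geq 1$) controls $X(1/\theta)$ after a short algebraic simplification.

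The hardest part will be this convex endpoint analysis of $X$. Because the term $be^{av - a/\theta}$ makes $X$ genuinely nonlinear in $v$, a naive monotonicity argument would fail; one has to recognise that the two lower bounds in the $\max$ defining $b$, together with the $\max$ defining $a$, are exactly what is needed to rule out an interior maximum and to handle the two extremal values $X(0)$ and $X(1/\theta)$ separately. Once the convexity-plus-endpoints picture is in place, the remaining verifications are routine.
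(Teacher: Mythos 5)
Your overall plan mirrors the paper's proof closely: the same change of variable $\rho = |x|^2/(t+\tau)$ (which you call $v$), the same factorisation through $\psi = c(t+\tau)^{-b-1}e^{-a\rho} \geq 2$, the same eigenvalue decomposition of $D^2\phi$ (radial eigenvalue $2a\psi(2a\rho - 1)$, tangential eigenvalue $-2a\psi$ of multiplicity $d-1$), the same case split on the sign of $2a\rho - 1$, and identical handling of the boundary conditions and the initial bound $\phi \leq \beta$. For the $v \geq 1/(2a)$ branch, your convexity/endpoint argument is equivalent to the paper's: the paper locates the critical point $\rho_0$ of $\rho\mapsto be^{a(\rho-\theta^{-1})} + a(1-4a\lambda)\rho$ and uses \eqref{choice_b} to show $\rho_0 \leq (2a)^{-1}$, which is exactly your $X'(1/(2a)) \geq 0$ with $\gamma=\lambda$.

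There is, however, a genuine error in your treatment of the $v < 1/(2a)$ branch. You assert that the bound $b \geq (4\lambda a - 1)/e^{1/2 - a/\theta}$ forces $X'(1/(2a)) \leq 0$ for the Case 2 expression, i.e.\ the one with $\gamma = \Lambda$. Computing directly,
\begin{equation*}
X'\!\left(\tfrac{1}{2a}\right) = a\bigl(b\,e^{1/2 - a/\theta} + 1 - 4a\Lambda\bigr) \qquad (\gamma=\Lambda),
\end{equation*}
so $X'(1/(2a)) \leq 0$ is equivalent to $b \leq (4a\Lambda - 1)/e^{1/2-a/\theta}$ — an \emph{upper} bound on $b$ which is neither imposed by \eqref{choice_b} nor true in general. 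For example, with $\lambda = \Lambda = d = 1$ and $\theta = 3/4$ one has $a = 7/8$, $b \approx 5.65$ (the first term in the $\max$ dominates) and $X'(1/(2a)) \approx 0.35 > 0$. Your conclusion that the maximum of $X$ on $[0,1/\theta]$ occurs at $v=0$ or $v=1/\theta$ still holds, but for a different reason: the two branches agree at $v=1/(2a)$ and you have already shown the $\gamma=\lambda$ branch is nondecreasing to the right, so $X(1/(2a)) \leq X(1/\theta)$ and piecewise convexity alone localises the maximum to the outer endpoints. To close the gap you should replace the false monotonicity claim with this observation (or simply also verify that $X(1/(2a)) \leq -\tfrac12$, which it does). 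It is also worth noting that the paper avoids endpoint analysis entirely in this regime: it just bounds each term of $X(\rho)$ directly, using $a\rho \leq \tfrac12$, $0 \leq 2a(d - 2a\rho)\Lambda \leq 2ad\Lambda$ and $e^{a(\rho-\theta^{-1})} \leq e^{1/2 - a\theta^{-1}}$, giving $X(\rho) \leq -(1-e^{1/2-a\theta^{-1}})b + \tfrac12 + 2ad\Lambda \leq -\tfrac12$ by the first inequality in \eqref{choice_b}, which is simpler and sidesteps the pitfall above.
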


\begin{proof} Let us introduce the variable $ \rho:= |x|^2/(t+\tau)$ and the function $ \psi $ given by  
\begin{equation*}
  \psi (\rho,t) := c(t+\tau)^{-(b+1)} e^{- a \rho} .  
\end{equation*}
First, by inserting the value of $c$ given by \eqref{choice_c}, observe that  
\begin{equation}\label{est_psi_ab}
\psi (\rho,t)\geq 2  \quad \text{on } G_{\theta,1+\tau} (0,-\tau). 
\end{equation}
The two last properties are immediate to check. Thus we focus on the first assertion.
The time derivative of $\phi $ is given by 
\begin{equation*}
  \partial_t \phi (x,t)= \psi (\rho,t) \left( -(1- e^{a ( \rho- \theta^{-1}) }  ) b +a\rho \right).
\end{equation*}
and the Hessian of $\phi $ is given by
\begin{equation*}
 D^2 \phi (x,t) = 2 a\psi (\rho,t)
  \left( \left(- 1 + 2a \rho \right) \frac{x\otimes x}{|x|^2}   - \left(I -\frac{x\otimes x}{|x|^2} \right)   \right) . 
\end{equation*}
and has eigenvalues $2 a\psi (\rho,t) \left(- 1 + 2a \rho \right)$ with multiplicity $1$ 
and $ -2 a\psi (\rho,t)$ with multiplicity $d-1$. Hence
\begin{equation*}
 \mathcal{P}^+_{\lambda, \Lambda} (D^2 \phi (x,t)) = \psi (\rho,t)
 \begin{cases}
2a \left( d  -2a \rho  \right)\Lambda           ,            & \text{if }  \rho \leq (2a)^{-1}, \\
2 a \left(- \lambda \left(-1  +2 a \rho  \right) +(d-1) \Lambda \right), & \text{if } \rho\geq (2a)^{-1}.
\end{cases}
\end{equation*}
We distinguish two cases. If  $0 \leq  \rho \leq (2a)^{-1}$, 
\begin{align*}
\partial_t\phi(x,t)  + \mathcal{P}^+_{\lambda, \Lambda} (D^2\phi(x,t) ) 
& = \psi (\rho,t)   \left( -(1- e^{a ( \rho - \theta^{-1}) }  ) b + a ( \rho + 2( d  - 2a \rho)  \Lambda)    \right) \\
& \underset{\eqref{choice_a}}{\leq} 
\psi (\rho,t) \left( -(1- e^{\frac{1}{2}-a \theta^{-1}}) b + \tfrac{1}{2} + 2 d a \Lambda    \right)  \\
& \underset{\eqref{choice_b}}{\leq}  - \tfrac{1}{2} \psi (\rho,t).
\end{align*}
By using \eqref{est_psi_ab}, we obtain the desired upper bound. Now assume that $ (2a)^{-1} \leq \rho   \leq  \theta^{-1}$,   
\begin{multline*}
\partial_t\phi(x,t) + \mathcal{P}^+_{\lambda, \Lambda} (D^2\phi(x,t)) 
\leq \psi (\rho,t) \left( -(1- e^{a ( \rho - \theta^{-1}) }  ) b + a (1 - 4 a \lambda) \rho + 2 d a \Lambda    \right)\\
\leq \psi (\rho,t) \left( -b + 2 a d \Lambda+ b e^{a ( \rho - \theta^{-1}) } + a (1 - 4 a \lambda) \rho   \right).
\end{multline*}
The function $\rho \mapsto b e^{a (\rho-\theta^{-1})} + a (1-4a \lambda) \rho$ 
is decreasing on $]-\infty, \rho_0]$ and increasing on $[\rho_0, +\infty[ $ with 
$\rho_0:=  \tfrac{1}{a} \ln \left( \tfrac{4 a \lambda - 1}{b} \right) +\theta^{-1}$.
By \eqref{choice_b}, the coefficients $a$ and $b$ are chosen so that $\rho_0 <(2a)^{-1}$. 
Under this assumption, the upper bound on the interval $[(2a)^{-1}, \theta^{-1}]$ corresponds to $\rho= \theta^{-1}$ which provides
\begin{multline*}
\partial_t\phi(x,t)  + \mathcal{P}^+_{\lambda, \Lambda} (D^2\phi (x,t)) 
   \leq \psi (\rho,t) a  \left( 2  d \Lambda   +  \theta^{-1} - 4 \theta^{-1} a \lambda   \right) \\
   \underset{\eqref{choice_a}}{\leq} - \psi (\rho,t)   a  \theta^{-1}  
   \underset{\eqref{choice_a}}{\leq} - \psi (\rho,t) .   
\end{multline*}  
By recalling \eqref{est_psi_ab}, we obtain the desired upper bound.
\end{proof}

The following lemma contains the measure theoretic information necessary to conclude the proof of Proposition~\ref{W_2eps_para}. 
The argument relies on Lemmas~\ref{lemma_abp_measure_inf} and~\ref{fund_sol_heat_eq_bar}.

\begin{lemma} \label{measure_estimate_macroscopic_ball}
Let $3/4 \leq \theta$, $h_0>0$, $\kappa_1>0$, $A_\kappa$ be defined by \eqref{def_A_op} and $G^{-}_{\theta,h_0}(x_0,t_0) \subseteq Q_1$. 
 There exist constants $M \geq 1$ and $\sigma>0$ depending only on $\theta$ and $d$, $\lambda$, $\Lambda$ such that, 
 if $(x_0,t_0) \in A_{\kappa_1}$ and $ \kappa \geq \kappa_1$, then 
 \begin{equation*}
  |A_{M\kappa} \cap G^-_{\theta,h_0}(x_0,t_0)| \geq |G^-_{\theta,h_0}(x_0,t_0) \cap A_{\kappa}|
  +\sigma \eta |G^-_{\theta,h_0}(x_0,t_0) \setminus A_{\kappa} |.
 \end{equation*}
\end{lemma}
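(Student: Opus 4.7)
The plan is to follow the classical ABP $+$ barrier $+$ Vitali chain, adapted to the parabolic ball framework of the paper. First, by the monotonicity part of Lemma~\ref{comparison_Aop_Theta}, the hypothesis $(x_0,t_0) \in A_{\kappa_1}$ together with $\kappa \geq \kappa_1$ yields $(x_0,t_0) \in A_\kappa$, so there exist $(y_0,s_0) \in B_1 \times (-1,t_0]$ with
\begin{equation*}
u(z,\tau) - \inf_{Q_1} u + \kappa\bigl(\tfrac{1}{2}|z - y_0|^2 - (\tau - s_0)\bigr) \geq 0 \quad \text{on } Q_1 \cap \{\tau \leq t_0\},
\end{equation*}
with equality at $(x_0,t_0)$. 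Since every point of $G^{-}_{\theta,h_0}(x_0,t_0) \subseteq Q_1$ has time coordinate $\leq t_0$, this concave paraboloid is available as a lower control on $u$ throughout the macroscopic ball.

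The heart of the argument is a local density estimate: there exist universal $M \geq 1$ and $\sigma_0 > 0$ such that, for every $(x,t) \in G^{-}_{\theta,h_0}(x_0,t_0)$ and every $h \in (0, t_0 - t]$,
\begin{equation*}
|G^{}_{\theta,h}(x,t) \cap A_{M\kappa}| \geq \sigma_0\, |G^{}_{\theta,h}(x,t)|.
\end{equation*}
To establish this, invoke Lemma~\ref{construct_cylinder_pb} to pick the cylinder $Q_r(x_2,t_2) \subseteq G^{}_{\theta,h}(x,t) \cap G^{-}_{\theta,h_0}(x_0,t_0)$ of universal volume ratio $\eta_0$. On a parabolic ball that circumscribes $Q_r(x_2,t_2)$ (chosen so $Q_r$ sits in its upper half, while the ball remains in $Q_1$), introduce the rescaled barrier $\phi$ from Lemma~\ref{fund_sol_heat_eq_bar}: it is a strict subsolution, vanishes on the lateral boundary, and is controlled at the bottom. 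Then, for vertices $(y,s)$ ranging over an appropriate parabolic ball in vertex-space governed by property~\ref{inclusion_ball_contact_set}, consider
\begin{equation*}
w_{y,s}(z,\tau) := u(z,\tau) - \inf_{Q_1} u + M\kappa\bigl(\tfrac{1}{2}|z - y|^2 - (\tau - s)\bigr) - C\,\phi(z,\tau).
\end{equation*}
Using $\partial_t u + \mathcal{P}^+_{\lambda, \Lambda}(D^2 u) \geq 0$ and the subsolution property of $\phi$, together with the initial lower paraboloid at $(x_0,t_0)$, one checks that for universal $M$ the infimum of $w_{y,s}$ over $Q_1 \cap \{\tau \leq t\}$ is strictly negative inside and nonnegative on the enclosing lateral boundary, hence is attained in $\overline{Q}_r(x_2,t_2)$. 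Adjusting $s$ so that this infimum equals zero, the definition of $A_{M\kappa}$ produces a touching point in $A_{M\kappa} \cap Q_r(x_2,t_2)$. Applying Lemma~\ref{lemma_abp_measure_inf} with $a = M\kappa$ then bounds the measure of admissible vertices by a universal multiple of the measure of touching points $W \subseteq Q_r(x_2,t_2) \cap A_{M\kappa}$; since the vertex set is a parabolic ball of measure comparable to $|Q_r(x_2,t_2)|$, the density of $A_{M\kappa}$ in $Q_r(x_2,t_2)$, and hence in $G^{}_{\theta,h}(x,t)$ via property~\ref{ratio_good_cylr_pb}, is bounded below by a universal constant $\sigma_0$.

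To conclude, apply Lemma~\ref{vitali_par_balls} to the family $\{G^{}_{\theta,h(x,t)}(x,t) : (x,t) \in G^{-}_{\theta,h_0}(x_0,t_0) \setminus A_\kappa\}$, where at a.e.\ such point $h(x,t)$ is taken small enough that $G^{}_{\theta,h(x,t)}(x,t) \subseteq G^{-}_{\theta,h_0}(x_0,t_0)$ and, by Lebesgue differentiation for $A_\kappa$ along parabolic balls, the density $|A_\kappa \cap G^{}_{\theta,h(x,t)}(x,t)|/|G^{}_{\theta,h(x,t)}(x,t)|$ is below $\sigma_0/2$. Extracting a disjoint subfamily $\{G^{}_{\theta,h_i}(x_i,t_i)\}_i$ whose enlargements cover $G^{-}_{\theta,h_0} \setminus A_\kappa$ and summing the density estimate yields
\begin{equation*}
|(A_{M\kappa} \setminus A_\kappa) \cap G^{-}_{\theta,h_0}(x_0,t_0)| \geq \sum_i \tfrac{\sigma_0}{2}\,|G^{}_{\theta,h_i}(x_i,t_i)| \geq \tfrac{\sigma_0}{2}\,\eta\,|G^{-}_{\theta,h_0}(x_0,t_0) \setminus A_\kappa|,
\end{equation*}
with $\eta$ the universal constant from~\eqref{ratio_pb_vitali}. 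Setting $\sigma := \sigma_0/2$ and adding $|A_\kappa \cap G^{-}_{\theta,h_0}|$ using $A_\kappa \subseteq A_{M\kappa}$ gives the claim.

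The main obstacle is the barrier/ABP step: one must build $\phi$ on a parabolic ball that encloses $Q_r(x_2,t_2)$ while still sitting inside $Q_1$, verify that the minimization over vertices $(y,s)$ delivers genuine $A_{M\kappa}$-touching points with admissible vertex in $B_1 \times (-1,t]$, and properly propagate the rescaling constants so that $M$ and $\sigma_0$ depend only on $\theta,d,\lambda,\Lambda$. A secondary subtlety is justifying Lebesgue differentiation along parabolic balls, which is legitimized by the Vitali covering property in Lemma~\ref{vitali_par_balls} for this family of sets.
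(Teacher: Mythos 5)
Your argument has a genuine gap in the middle step. You assert a \emph{local density estimate} valid \emph{for every} $(x,t)\in G^-_{\theta,h_0}(x_0,t_0)$ and \emph{every} $h\in(0,t_0-t]$:
\begin{equation*}
|G_{\theta,h}(x,t)\cap A_{M\kappa}|\geq \sigma_0\,|G_{\theta,h}(x,t)|.
\end{equation*}
This is false as stated. The paper's Lemma~\ref{est_measure_small_pb_bd} proves a density estimate of this shape, but only under the crucial extra hypothesis that the \emph{top slice} of the small ball intersects $A_\kappa$, i.e.\ that
\begin{equation*}
G_{\theta,h}(x,t)\cap\{(y,s):s=t+h\}\cap G^-_{\theta,h_0}(x_0,t_0)\cap A_\kappa\neq\emptyset.
\end{equation*}
The barrier--comparison argument in Lemma~\ref{est_measure_small_pb_bd} starts from a touching point $(z_1,t_1)$ located precisely on this top slice: the paraboloid of opening $\kappa$ touches $u$ there, the rescaled barrier $\phi$ (Lemma~\ref{fund_sol_heat_eq_bar}) is positive at $(z_1,t_1)$ and vanishes on the lateral boundary of the small ball, and the comparison rules out a boundary minimum. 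If you try, as your writeup suggests, to use only the ``initial'' paraboloid anchored at $(x_0,t_0)$, that paraboloid is \emph{strictly} below $u$ on $G_{\theta,h}(x,t)$ when $h<t_0-t$ (the equality point $(x_0,t_0)$ lies outside, at time $t_0>t+h$), and the estimate it produces scales like $\kappa(t_0-t)$ rather than $\kappa h$, which is useless at small scales $h$.

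This matters precisely because of how you then select the covering family. You pick $h(x,t)$ small by Lebesgue differentiation so that the density of $A_\kappa$ in $G_{\theta,h(x,t)}(x,t)$ is below $\sigma_0/2$. But a ball with low $A_\kappa$-density is exactly the kind of ball that may have no $A_\kappa$-contact on its top slice, so the (false) unconditional density estimate you rely on cannot be rescued for those balls. Note also that if the unconditional density estimate were true, then $A_{M\kappa}$ would have density $\geq\sigma_0$ in every parabolic ball centered in $G^-$, which by Lebesgue differentiation would force $|G^-\setminus A_{M\kappa}|=0$ --- far too strong a conclusion.

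The correct mechanism, used in the paper, is a ``stopping time'' construction: for each $(x,t)\in G^-_{\theta,h_0}(x_0,t_0)\setminus A_\kappa$ one grows $h$ from $0$ until the ball first touches $A_\kappa$ at its top slice (this happens for some $h\leq t_0-t$ because $(x_0,t_0)\in A_\kappa$ lies in $G_{\theta,t_0-t}(x,t)$). This produces the collection $\mathcal{B}$: balls whose interior (below the top slice) lies entirely in $Q_1\setminus A_\kappa$, and whose top slice meets $A_\kappa$. The first property ensures the ball contributes its full measure to the cover of $G^-\setminus A_\kappa$ and that the $A_{M\kappa}$-mass found inside is genuinely new (lies in $A_{M\kappa}\setminus A_\kappa$); the second supplies the touching point $(z_1,t_1)$ that the barrier needs. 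Your Vitali step is otherwise in the right spirit, but you need to replace the Lebesgue-differentiation choice of $h$ by this first-hitting choice to make the density estimate available.
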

\begin{proof}
We decompose the measure estimate into two parts
 \begin{equation*}
 |A_{M\kappa} \cap  G^{-}_{\theta, h_0}(x_0,t_0)|
 = | A_{\kappa} \cap  G^{-}_{\theta, h_0}(x_0,t_0)| + |(A_{M\kappa} \setminus A_{\kappa}) \cap  G^{-}_{\theta, h_0}(x_0,t_0)|.
\end{equation*}
It is enough to estimate $|(A_{M\kappa} \setminus A_{\kappa}) \cap G^{-}_{\theta, h_0}(x_0,t_0)|$. We claim that
\begin{equation*}
\left| G^{-}_{\theta, h_0}(x_0,t_0) \setminus A_\kappa \right| 
\leq \frac{1}{\sigma \eta} \left| \left( A_{M\kappa}\setminus  A_{\kappa} \right) \cap G^{-}_{\theta, h_0}(x_0,t_0)  \right|.   
\end{equation*}
For $\kappa \geq \kappa_1$, we define the collection of parabolic balls given by 
\begin{multline*}
 \mathcal{B}:= \bigg\{ G_{\theta,h} (x,t) : (x,t) \in  G^{-}_{\theta, h_0}(x_0,t_0), \\
   G_{\theta,h} (x,t) \cap  G^{-}_{\theta, h_0}(x_0,t_0) \cap   \left\{ (y,s): s<t+h \right\}
   \subseteq Q_1\setminus A_\kappa  \\
  \text{and }  G_{\theta,h}(x,t) \cap \{(y, s) : s= t+h\} \cap  G^{-}_{\theta, h_0}(x_0,t_0)
  \cap A_\kappa \neq \emptyset  \bigg\}.
\end{multline*}
Notice that for all $(x,t) \in G^{-}_{\theta, h_0}(x_0,t_0)$, the point $(x_0,t_0)$ belongs to 
the parabolic ball $G_{\theta,t_0-t}(x,t)$. 
Observe that $(x_0,t_0)\in A_{\kappa}$ by applying Lemma~\ref{comparison_Aop_Theta} with $(x_0,t_0)\in A_{\kappa_1}$.
This implies that for all $ G_{\theta,h} (x,t)\in \mathcal{B}$, $h\leq t_0-t \leq h_0$.
Then, by Lemma~\ref{vitali_par_balls}, we may extract from  $\mathcal{B}$ a countable subcollection
 $\{ G^{}_{\theta,h_{i}}(x_i,t_i) : i \in \N \}$ such that the  $G^{}_{\theta,h_{i}}(x_i,t_i)$ are disjoint,  
  \begin{equation*} 
  G^{-}_{\theta, h_0}(x_0,t_0) \setminus A_\kappa  \subseteq \bigcup_{i\in \N}  \widehat  G^{}_{\theta,h_{i}}(x_i,t_i)
 \quad \text{ and } \quad \frac{| G^{}_{\theta,h_{i}}(x_i,t_i)|}{|\widehat{G}^{}_{\theta,h_{i}}(x_i,t_i)|} = \eta, 
 \end{equation*}
 with $\eta$ given by \eqref{ratio_pb_vitali}. By combining these, we get 
\begin{equation*}
 |  G^{-}_{\theta, h_0}(x_0,t_0) \setminus A_\kappa|
\leq \sum_{i\in \N} |\widehat G_{\theta,h_i}(x_i,t_i)| = \frac{1}{\eta} \sum_{i\in \N} |G_{\theta,h_i}(x_i,t_i)|.
  \end{equation*}
Next we complete the proof under the assumption that for all $i \in \N$, 
\begin{equation} \label{est_measure_ball_vitali}
 |G_{\theta,h_i}(x_i,t_i) \cap G^-_{\theta,h_0}(x_0,t_0) \cap A_{M\kappa}| \geq \sigma |G_{\theta,h_i}(x_i,t_i)|
\end{equation}
for some constants $M>1$ and $ \sigma>0$, depending only on $\theta$, $d$, $\lambda$ and $\Lambda$.
Using also that the selected balls are disjoint, we obtain that 
\begin{multline*}
\sum_{i\in \N} |G_{\theta,h_i}(x_i,t_i)| 
\leq \frac{1}{\sigma} \sum_{i\in \N}   \left| G_{\theta,h_i}(x_i,t_i) \cap G^-_{\theta,h_0}(x_0,t_0) \cap A_{M\kappa}  \right|  \\
= \frac{1}{\sigma}    \left|\bigcup_{i\in \N} G_{\theta,h_i}(x_i,t_i) 
\cap \left\{ (y,s): s<t_i+h_i \right\} \cap G^-_{\theta,h_0}(x_0,t_0) \cap A_{M\kappa}  \right|.  
\end{multline*}
Since every ball in $\mathcal{B}$ satisfies 
$G_{\theta,h}(x,t) \cap   \left\{ (y,s): s<t+h \right\} \cap G^-_{\theta,h_0}(x_0,t_0) 
\subseteq G^{-}_{\theta, h_0}(x_0,t_0) \setminus A_{\kappa}$, 
we deduce that 
\begin{equation*}
| G^{-}_{\theta, h_0}(x_0,t_0) \setminus A_\kappa |
\leq \frac{1}{\sigma \eta} \left|G^{-}_{\theta, h_0}(x_0,t_0) \cap \left( A_{M\kappa}\setminus  A_{\kappa} \right) \right|. 
\end{equation*}
The proof is complete, pending the verification of~\eqref{est_measure_ball_vitali}, which is achieved in the next lemma.
\end{proof}

The following lemma is the key step in the proof of Proposition~\ref{W_2eps_para}. 

\begin{lemma} \label{est_measure_small_pb_bd} 
Let $3/4 \leq \theta$, $\kappa_1> 0$  and  $G^-_{\theta,h_0}(x_0,t_0)  \subseteq Q_1$. Suppose $u\in \lsc(Q_1)$ satisfies 
\begin{equation}\label{u_supersol_pucci}
 \partial_tu +\mathcal{P}^+_{\lambda,\Lambda} (D^2u) \geq 0.
\end{equation}
 There exist constants $M \geq 1$ and $\sigma>0$ depending only on $\theta$ and $d$, $\lambda$, $\Lambda$   
such that if $(x_0,t_0) \in A_\kappa$, $\kappa \geq \kappa_1$, then
for all $(x,t) \in G^-_{\theta,h_0}(x_0,t_0)$ satisfying
\begin{equation*}
 G_{\theta,h}(x,t) \cap \{(y,s): s=t+h\} \cap G^-_{\theta,h_0}(x_0,t_0) \cap A_\kappa \neq \emptyset,
\end{equation*} 
we have
\begin{equation*}
|G_{\theta,h}(x,t) \cap A_{M\kappa} \cap G^-_{\theta,h_0}(x_0,t_0)| \geq \sigma |G_{\theta,h}(x,t)|. 
\end{equation*}
\end{lemma}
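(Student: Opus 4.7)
My plan is to combine the geometric cylinder selection of Lemma~\ref{construct_cylinder_pb}, the barrier from Lemma~\ref{fund_sol_heat_eq_bar}, and the ABP-type inequality of Lemma~\ref{lemma_abp_measure_inf}, closely following the standard proof of the elliptic $W^{2,\eps}$ estimate.

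\emph{Normalization and cylinder selection.} First I would assume without loss of generality that $\inf_{Q_1} u = 0$, and extract from the hypothesis a point $(x^\dagger, t+h) \in G_{\theta,h}(x,t) \cap G^{-}_{\theta,h_0}(x_0,t_0) \cap A_\kappa$ on the top slice $\{s = t+h\}$. By the definition of $A_\kappa$ there exist $y^\dagger \in B_1$ and $s^\dagger \leq t+h$ such that the paraboloid
\[
L^\dagger(z,\tau) := -\kappa\bigl(\tfrac{1}{2}|z - y^\dagger|^2 - (\tau - s^\dagger)\bigr)
\]
touches $u$ from below on $Q_1 \cap \{\tau \leq t+h\}$ and vanishes at $(x^\dagger, t+h)$. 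Setting $w := u - L^\dagger$, Pucci subadditivity combined with~\eqref{u_supersol_pucci} gives $\partial_t w + \mathcal{P}^+_{\lambda,\Lambda}(D^2 w) \geq -(1 + \Lambda d)\kappa$, with $w \geq 0$ and $w(x^\dagger, t+h) = 0$. Lemma~\ref{construct_cylinder_pb} then supplies a cylinder $Q_r(x_2, t_2) \subseteq G_{\theta,h}(x,t) \cap G^{-}_{\theta,h_0}(x_0,t_0)$ with $t_2 = t + h/2$, $r \gtrsim \sqrt{h/\theta}$, and satisfying properties~P1--P3.

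\emph{Barrier and ABP.} Next I would rescale and translate the barrier of Lemma~\ref{fund_sol_heat_eq_bar} to obtain a function $\phi \geq 0$ supported on a parabolic ball $\mathcal{G}$ which contains $Q_r(x_2, t_2)$ and sits inside $G_{\theta,h}(x,t) \cap G^{-}_{\theta,h_0}(x_0,t_0)$, with $\phi \equiv 0$ on the lateral parabolic boundary, $\phi \leq C\kappa r^2$ on the initial slice, and $\partial_t \phi + \mathcal{P}^+(D^2 \phi) \leq -C' \kappa$ in the interior. I would then apply Lemma~\ref{lemma_abp_measure_inf} to $w - \phi$, which is a supersolution of the Pucci equation with the correct sign, using $a = M\kappa$ for a universal $M$ to be determined, and with the parameter set $V$ chosen as a small cylinder of size comparable to $r$ near $(x_2, t_2)$. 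The barrier forces, for each $(y,s)\in V$, the infimum of the perturbed functional $w(z,\tau) - \phi(z,\tau) + \tfrac{M\kappa}{2}|z - y|^2 - M\kappa(\tau - s)$ to be attained \emph{inside} $Q_r(x_2, t_2)$: on the lateral boundary of $\mathcal{G}$ the barrier vanishes and $w \geq 0$, whereas at $(x^\dagger, t+h)$ the functional is strictly negative by $w = 0$ and $\phi > 0$. The bound~\eqref{est_measure_vertices_touching} then produces a contact set $W \subseteq Q_r(x_2, t_2)$ with $|W| \geq \sigma_0 |Q_r(x_2, t_2)|$ for a universal $\sigma_0 > 0$.

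\emph{From local contact to $A_{M\kappa}$.} The last, and most delicate, step is to show that every point of $W$ in fact belongs to $A_{M\kappa}$. The ABP argument only produces a paraboloid touching from below \emph{locally} inside $\mathcal{G}$, while the definition~\eqref{def_A_op} demands global touching on $Q_1 \cap \{\tau \leq \cdot\}$. The bridge is property~P3 of Lemma~\ref{construct_cylinder_pb}: for every contact point in $Q_{r/4}(x_2,t_2)$ the parabolic ball $G^{}_{1/2,\cdot}$ attached to the associated paraboloid remains inside $Q_r(x_2,t_2) \subseteq Q_1$, and outside this region the lower bounds $u \geq L^\dagger$ and $w\geq 0$ ensure that the shallower paraboloid of opening $M\kappa$ still stays below $u$, provided $M$ is taken universal and large enough relative to the barrier constants $\beta$ and $1+\Lambda d$. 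Combining $|W|\geq \sigma_0 |Q_r(x_2,t_2)|$ with property~P2, i.e.~$|Q_r(x_2,t_2)| \geq \eta_0 |G_{\theta,h}(x,t)|$, yields the lemma with $\sigma := \sigma_0 \eta_0$. The main obstacle I anticipate is precisely this last upgrade and the bookkeeping of the constants introduced by the barrier; property~P3 is tailored for it, but one must calibrate the rescaling carefully so that the opening $M\kappa$, once normalized, matches the universal opening $1/2$ appearing in~P3.
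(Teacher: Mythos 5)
Your outline shares the paper's ingredients (cylinder selection via Lemma~\ref{construct_cylinder_pb}, the barrier from Lemma~\ref{fund_sol_heat_eq_bar}, and the ABP estimate of Lemma~\ref{lemma_abp_measure_inf}), but the way you propose to combine them does not actually verify the hypothesis of Lemma~\ref{lemma_abp_measure_inf}, and it misidentifies where the real difficulty lies.

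The hypothesis of Lemma~\ref{lemma_abp_measure_inf} is not merely that a perturbed infimum is attained in the interior; it is that for each vertex $(y,s) \in V$ there is a point $(x,t)$ at which the shifted paraboloid touches $u$ from below with infimum \emph{exactly} equal to zero, the infimum being taken over $Q_1 \cap \{\tau \leq t\}$. This is also exactly the condition defining $A_{M\kappa}$ in~\eqref{def_A_op}, so it cannot be finessed. You propose to apply Lemma~\ref{lemma_abp_measure_inf} to $w - \phi$ with $w = u - L^\dagger$, but the barrier $\phi$ lives only on a local parabolic ball; the touching paraboloids one would extract are for $w - \phi$, not $u$, and do not transfer to the global condition that defines $A_{M\kappa}$. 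You sense a local-to-global issue and invoke property~\ref{inclusion_ball_contact_set}, but property~\ref{inclusion_ball_contact_set} only controls the geometry of the contact set; it does not repair the mismatch between the function to which you applied ABP and the function $u$ for which $A_{M\kappa}$ is defined, and it says nothing about the exact-zero-infimum requirement.

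The paper's route is genuinely different on this key point. The barrier is used only to produce, in Step~\ref{stepm1}, a point $(z_2,t_2)$ on the bottom slice of $G_2$ where $u$ is already within $O(\kappa r^2)$ of the original touching paraboloid $P_{y_1,s_1;\kappa}$. Then, instead of subtracting the barrier, Step~\ref{stepm2} \emph{adds a second concave paraboloid} $P_{y_2,s_2;\gamma\kappa}$, whose vertex $(y_2,s_2)$ ranges over a parabolic ball of measure comparable to $|G_{\theta,h}(x,t)|$. The crucial structural fact is that $P_{y_2,s_2;\gamma\kappa}(z,\tau) < 0$ whenever $(z,\tau) \notin G_{1/2,t_2-s_2}(y_2,s_2)$, so outside that set the sum $P_{y_1,s_1;\kappa} + P_{y_2,s_2;\gamma\kappa}$ automatically stays below $u$ because $P_{y_1,s_1;\kappa}$ already does. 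This is what handles the global constraint with no barrier involved. The exact-zero-infimum condition is then verified via the left-continuity argument for the function $\zeta(t)$, which locates the first time at which the infimum of $\chi$ transitions through zero — a genuinely parabolic subtlety absent in the elliptic proof, and one your outline does not address. Finally, completing the square turns the sum into a single paraboloid of opening $(\gamma+1)\kappa$ whose vertex depends affinely on $(y_2,s_2)$, so the change of variables gives $|V| \gtrsim |G^-_{1/2,r^2/16}|$ and Lemma~\ref{lemma_abp_measure_inf} applies. Without this two-paraboloid decomposition and the $\zeta$-continuity step, your plan does not deliver touching points that belong to $A_{M\kappa}$ in the required sense, so the proposal has a genuine gap.
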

\begin{proof}
 Let $(z_1,t_1)\in G_{\theta,h}(x,t) \cap \{(y,s): s=t+h\} \cap G^-_{\theta,h_0}(x_0,t_0) \cap A_\kappa$. 
 By the definition of $A_\kappa$, we can touch $u$ at $(z_1,t_1)$ from below by a concave paraboloid of opening $\kappa$: 
 there exist $y_1 \in B_1$  and $ - 1 < s_1 \leq t_1$ such that
  \begin{multline}  \label{cond_inf_z1_tau1} 
   u(z_1,t_1) - \inf_{Q_1} u  +\tfrac{\kappa}{2} |z_1-y_1|^2 - \kappa (t_1-s_1) \\
 =\inf_{(z,\tau) \in Q_1, \tau \leq t_1} \left( u(z,\tau) -\inf_{Q_1}u +\tfrac{\kappa}{2} |z-y_1|^2 -\kappa (\tau -s_1) \right) =0.
 \end{multline}  
 \setcounter{step}{0}
\begin{step} \label{stepm0}
 Let  $Q_r(x_2,t_2)\subseteq G_{\theta,h}(x,t) \cap G^-_{\theta,h_0}(x_0,t_0)$ be the cylinder given by Lemma~\ref{construct_cylinder_pb} 
 (see Figure~\ref{cube_Qrx2t2}). 
 In particular, 
 \begin{equation} \label{est_petit_cyl_r}
   t_2= t+\frac{h}{2} \quad \text{and} \quad r\geq \frac{1}{\nu} \sqrt{\frac{h}{\theta}} \quad \text{with } \nu =  \frac{4}{\sqrt{2}-1}. 
 \end{equation}
 If we set 
 \begin{equation} \label{est_alpha_delta}
      \alpha : = \min \left\{1 , \frac{ \theta}{(1+\sqrt{2})^2} \right\}  
\quad \text{ and } \quad
  \delta : = \frac{1}{16\nu^2} \frac{\alpha}{\theta}>0, 
 \end{equation}
we claim that the parabolic ball  (see Figure~\ref{cube_Qrx2t2}) $$G_2 : = G_{\alpha, h \left(\frac12+\delta \right)} (x_2,t_2 - \delta h)$$ satisfies 
 \begin{equation} \label{geometric_facts1}
  G_2 \cap \{(y,s) : s \leq t_2\} \subseteq \ol Q_{r/4} (x_2,t_2)
  \end{equation}
  and
 \begin{equation} \label{geometric_facts2}
  (z_1,t_1) \in  (G_2 \setminus \partial_p G_2) \cap \{(y,s) : s=t_1\}.
 \end{equation}
To obtain the first assertion, it suffices to show that the vertex $(x_2,t_2 - \delta h)$ of $G_2$
is in $ \ol Q_{r/4} (x_2,t_2)$ and $G_2 \cap \{(y,s): s= t_2 \} \subseteq \ol B_{r/4} (x_2)$. 
First,  by using \eqref{est_petit_cyl_r} and~\eqref{est_alpha_delta}, the inequality
\begin{equation*}
 \delta h \leq \left(\frac{1}{16\nu^2} \frac{\alpha}{\theta}\right)
 \theta \nu^2 r^2 = \frac{\alpha}{16} r^2 \leq  \frac{r^2}{16} 
\end{equation*}
implies $(x_2,t_2 - \delta h)\in \ol Q_{r/4} (x_2,t_2)$.
By the definition of the parabolic ball $G_2$ and using \eqref{est_petit_cyl_r} and~\eqref{est_alpha_delta}, 
each $(z,t_2) \in  G_2$ satisfies $ |z-x_2|^2 \leq \alpha^{-1}\delta h\leq \frac{r^2}{16}$.  
Then, for the second assertion, observing that $(z_1,t_1)\in G_{\theta,h}(x,t)  \cap \{(y,s): s=t+h\}$, 
it is enough to show that 
\begin{equation*}
G_{\theta,h}(x,t)  \cap \{(y,s): s=t+h\} \subseteq (G_2 \setminus \partial_p G_2 )  \cap \left\{(y,s) : s=t_2+\frac{h}{2} \right\} .
\end{equation*}
Let $(z,\tau) \in G_{\theta,h}(x,t) \cap \{(y,s): s=t+h\}$.
Since $(x_2,t_2) \in G_{\theta,h}(x,t) \cap \{(y,s): s=t+h/2\}$  and $\delta>0$, we get
 \begin{equation*}
  |z-x_2| \leq |z-x| + |x-x_2| 
  = \left(1+ \frac{1}{\sqrt{2}} \right) \sqrt{\frac{h}{\theta}} \underset{\eqref{est_alpha_delta}}{<} \sqrt{\left(\frac 12 + \delta \right) \frac{h}{\alpha}}.
 \end{equation*}
 This is equivalent to $(z,\tau) \in (G_2 \setminus \partial_p G_2) \cap \left\{(y,s) : s=t_2+\frac{h}{2} \right\}$, 
 which gives \eqref{geometric_facts2}.
\end{step}
\begin{figure}[t]
\centering
 \scalebox{0.88}{\includegraphics{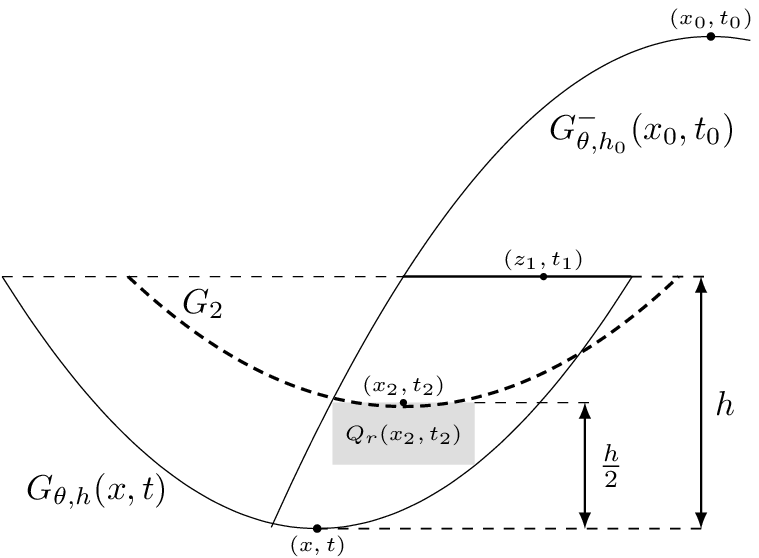}}
 \scalebox{0.88}{\includegraphics{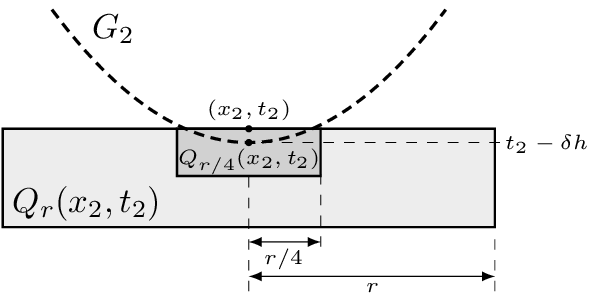}}
\caption{
$Q_r(x_2,t_2) \subseteq G_{\theta,h}(x,t)  \cap G^-_{\theta,h_0}(x_0,t_0)$ and $G_2 = G_{\alpha, h(1/2+\delta)} (x_2,t_2 - \delta h)$.} 
 \label{cube_Qrx2t2} 
\end{figure}

\begin{step} \label{stepm1}
 We claim that there exists $(z_2,t_2)\in G_2 \cap \{(y,s) : s = t_2 \}$ such that
 \begin{equation}\label{est_z2t2_barrier}
  u(z_2,t_2) - \inf_{Q_1} u + \tfrac{\kappa}{2} |z_2-y_1|^2 - \kappa ( t_2  - s_1)  \leq  (d\Lambda+3) \kappa \beta \theta \nu^2 r^2.
 \end{equation}
By applying (a properly scaled) Lemma~\ref{fund_sol_heat_eq_bar}, there exists a barrier function $w$ which satisfies
\begin{equation}\label{barrier_rescaled}
\begin{cases}
 \partial_t w +\mathcal{P}^+_{\lambda,\Lambda}(D^2w)\leq -1, & \text{in }G_2 \cap \{(y,s) : s > t_2 \},  \\
  w=0 ,                        & \text{on }\partial_p G_2 \cap \{(y,s) : s>t_2 \}, \\
  0\leq w \leq \beta h,        & \text{on }G_2 \cap \{(y,s) : s =t_2\} ,  
\end{cases}
\end{equation}
and $w>0$ in $ (G_2\setminus \partial_p G_2) \cap \{(y,s) : s>t_2 \}$. 
In particular, this implies by Step~\ref{stepm0} that $w(z_1,t_1)>0$. 
We have that $w\leq \beta h$ in $G_2 \cap \{ (y,s) : s >t_2 \}$ by the maximum principle. Observe that the function 
\begin{equation*}
 \vphi(z,\tau):= ((d\Lambda+1) \kappa +2 \kappa_1) w -  \tfrac{\kappa}{2} |z-y_1|^2 + \kappa \tau, 
\end{equation*}
satisfies 
\begin{equation*}
\partial_t \vphi +\mathcal{P}^+_{\lambda,\Lambda}(D^2\vphi) \leq -2 \kappa_1. 
\end{equation*}
Notice that $u$ satisfies 
\begin{equation} \label{eq_pucci_u_op_ajuste}
\partial_t u +\mathcal{P}^+_{\lambda,\Lambda}(D^2u) \geq  - \kappa_1. 
\end{equation}
The comparison principle implies that the map $(z,\tau) \mapsto u(z,\tau) - \vphi(z,\tau)$ attains its infimum in 
$G_2\cap \{(y,s) : s >t_2\}$ at some point $(z,\tau)=(z_2,t_2) \in \partial_p (G_2 \cap \{(y,s) : s >t_2\})$.
It is impossible that $(z_2,t_2) \in \partial_p G_2 \cap \{(y,s) : s >t_2 \}$ since \eqref{cond_inf_z1_tau1}, 
$w=0$ on $\partial_p G_2 \cap \{ (y,s) : s>t_2 \}$ and $w(z_1,t_1)>0$ imply that
\begin{align*}
 u(z_1,t_1) - & \vphi(z_1,t_1) =  u(z_1,t_1)  + \tfrac{\kappa}{2} |z_1-y_1|^2 - \kappa t_1
  - ((d\Lambda+1) \kappa +2 \kappa_1) w(z_1,t_1) \\
& \underset{\eqref{cond_inf_z1_tau1}}{<}
\inf_{\substack{(z,\tau) \in Q_1 \\ \tau \leq \tau_1}} \left( u(z,\tau) +\tfrac{\kappa}{2} |z-y_1|^2- \kappa \tau \right)  \\
& \leq \inf_{\substack{(z,\tau) \in \partial_p G_2 \\ t_2 < \tau \leq t_1}} \left( u(z,\tau) +\tfrac{\kappa}{2} |z-y_1|^2- \kappa \tau \right)
     = \inf_{\substack{(z,\tau) \in \partial_p G_2 \\ t_2 < \tau \leq t_1}} \left( u(z,\tau)  - \vphi(z,\tau) \right).
\end{align*}
Moreover, it is impossible that $(z_2,t_2)$ satisfies $t_2=t_1$ since $\vphi$ satisfies
\begin{equation*}
 \partial_t \vphi(\cdot,t_1) +\mathcal{P}^+_{\lambda,\Lambda}(D^2\vphi (\cdot,t_1))\leq -2 \kappa_1 
 \quad \text{for } G_2 \cap \{ (y,s) : s=t_1\}.
\end{equation*}
Hence $(z_2,t_2)\in G_2 \cap \{(y,s): s = t_2 \}$ and so, in particular, by \eqref{geometric_facts1} satisfies $|z_2-x_2| \leq \frac{1}{4} r$,
and
\begin{equation*}
 \vphi(z_2,t_2) = ((d\Lambda+1) \kappa +2 \kappa_1) w(z_2,t_2) -  \tfrac{\kappa}{2} |z_2-y_1|^2 + \kappa t_2. 
\end{equation*}
Using that $w>0$  in $G_2 \cap \{ (y,s) : s >t_2 \}$, we obtain that
\begin{multline*}
u(z_1,t_1)+ \tfrac{\kappa}{2} |z_1-y_1|^2 - \kappa t_1  \geq u(z_1,t_1) -\vphi(z_1,t_1) \\
\geq  \inf_{G_2 \cap \{s>t_2 \}} \left( u(z,\tau) - \vphi(z,\tau) \right) 
  = u(z_2,t_2) -\vphi(z_2,t_2) \\=  u(z_2,t_2) + \tfrac{\kappa}{2} |z_2-y_1|^2 - \kappa t_2 - ((d\Lambda+1) \kappa +2 \kappa_1) w(z_2,t_2). 
\end{multline*}
By combining \eqref{est_petit_cyl_r} and \eqref{barrier_rescaled}, we know that
\begin{equation*}
 w(z_2,t_2) \leq  \beta \theta \nu^2 r^2. 
 \end{equation*}
Using this together with \eqref{cond_inf_z1_tau1}, 
$(z_1,t_1) \in (G_2 \setminus \partial_p G_2) \cap \{(y,s): s=t_2+\frac{h}{2}\}$ (by~\eqref{geometric_facts2}) and~$\kappa \geq \kappa_1$,
\begin{multline*}
 \inf_{(z,\tau) \in Q_1,  \tau \leq t_1}  \left( u(z,\tau) +\tfrac{\kappa}{2} |z-y_1|^2 - \kappa \tau  \right) \\
  \geq u(z_2,t_2) + \tfrac{\kappa}{2} |z_2-y_1|^2 - \kappa t_2 - \left(d\Lambda+3 \right) \kappa \beta \theta \nu^2 r^2.
\end{multline*}
Recalling \eqref{cond_inf_z1_tau1}, we obtain \eqref{est_z2t2_barrier}.
\end{step}

\begin{step}\label{stepm2}
Let $\gamma:=17 (d\Lambda+3) \beta \theta \nu^2$. 
We claim that for all $(y_2,s_2)\in G^-_{1/2, r^2/16} \left(z_2,t_2 - \frac{r^2}{16}\right)$, 
there exists a point $(z_3,t_3)\in G_{1/2,t_2-s_2}(y_2,s_2)\subseteq Q_r(x_2,t_2)$ such that
\begin{equation} \label{good_point_in_the_good_set}
\chi(z_3,t_3) = \inf_{\substack{(z,\tau) \in Q_1\\ \tau \leq t_3 }} \chi(z,\tau)=0, 
\end{equation}
where the function $\chi$ is given by 
\begin{equation*}
\chi(z,\tau): =u(z,\tau) -\inf_{Q_1}u + \tfrac{\kappa}{2} |z-y_1|^2 - \kappa (\tau-s_1)+\tfrac{\gamma \kappa}{2} |z-y_2|^2 -\gamma  \kappa (\tau -s_2) .
\end{equation*}
Observe the function $\chi$ can be written in the form
\begin{equation*}
 \chi(z,\tau) =u(z,\tau) -\inf_{Q_1}u - P_{y_1,s_1;\kappa} (z,\tau) - P_{y_2,s_2; \gamma \kappa} (z,\tau) 
\end{equation*}
where $P_{y_1,s_1;\kappa}$ and $P_{y_2,s_2; \gamma \kappa} $ are the concave paraboloids 
\begin{equation*}
\left\{
\begin{aligned}
& P_{y_1,s_1;\kappa} (z,\tau):= - \tfrac{\kappa}{2} |z-y_1|^2 + \kappa (\tau-s_1)\\
& P_{y_2,s_2; \gamma \kappa} (z,\tau) := - \tfrac{\gamma \kappa}{2} |z-y_2|^2 + \gamma  \kappa (\tau -s_2).
\end{aligned}
\right.
\end{equation*}
 Let $(y_2,s_2)\in G^-_{1/2, r^2/16}\left(z_2,t_2 - \frac{r^2}{16}\right)$. By combining Steps~\ref{stepm0}--\ref{stepm1}, observe that 
\begin{equation*}
(z_2,t_2) \in G_2 \cap \{(y,s) : s=t_2\} \subseteq \ol Q_{r/4}(x_2,t_2). 
\end{equation*}
 By applying Lemma~\ref{construct_cylinder_pb} part~\ref{inclusion_ball_contact_set}, we get 
 $G_{1/2,t_2-s_2}(y_2,s_2)\subseteq Q_r(x_2,t_2)$.  
 Define 
 \begin{equation*}
   \zeta(t):=  \inf \left\{\chi(z,\tau): (z,\tau)\in  G_{1/2,t_2-s_2}(y_2,s_2) ,  \tau \leq t  \right\}.
 \end{equation*}
 To prove the claim, we will use the three following facts:
   \begin{multline}\label{eval_infbigg0}
 t \mapsto \inf \left\{\chi(z,\tau): (z,\tau)\in Q_1 \setminus  G_{1/2,t_2-s_2}(y_2,s_2), \tau \leq t \right\}  \\
 \text{ is nonnegative on }  [s_2,t_2], 
\end{multline}
and
\begin{equation}\label{eval_chi_z2t2}
 \chi(z_2,t_2)<0 \leq  \chi(y_2,s_2), 
\end{equation}
and  
\begin{multline}\label{cont_inf}
 t\mapsto \zeta(t) \text{ is non-increasing and right-continuous on } [s_2,t_2], \\
 \text{ and left-continuous on } [s_2,t_2] \cap \{s: \zeta(s)< 0 \}.  
\end{multline}
Assuming that we have shown \eqref{eval_infbigg0}, \eqref{eval_chi_z2t2} and \eqref{cont_inf}, 
let us prove the claim given by \eqref{good_point_in_the_good_set}.
 Then for all  $t\in [s_2,t_2]$, 
 \begin{multline*}
  \inf \left\{\chi(z,\tau): (z,\tau)\in Q_1 , \tau \leq t  \right\} \\
 = \min \left(\zeta(t), \inf \left\{\chi(z,\tau): (z,\tau)\in Q_1 \setminus  G_{1/2,t_2-s_2}(y_2,s_2),  \tau \leq t \right\} \right). 
 \end{multline*}
 By \eqref{eval_infbigg0}, the second infimum in the right-hand side above is nonnegative  for $t\in [s_2,t_2]$  and it suffices to study the sign of $\zeta(t)$. 
 First notice  $\zeta(s_2)= \chi(y_2,s_2)\geq 0$ by~\eqref{eval_chi_z2t2}. Then, since $(z_2,t_2)\in G_{1/2,t_2-s_2}(y_2,s_2)$ we obtain also by~\eqref{eval_chi_z2t2}  that
\begin{equation*}
  \zeta(t_2) \leq \chi(z_2,t_2)<0.
\end{equation*}
By \eqref{cont_inf}, we deduce there exists a time $t_3\in [s_2,t_2)$ such that
\begin{equation*}
\zeta(t_3)= \inf \left\{ \chi(z,\tau) : (z,\tau)\in   G_{1/2,t_2-s_2}(y_2,s_2)\text{ and } \tau \leq t_3 \right\}=0.   
\end{equation*}
Since $\chi \in \lsc(Q_1)$, there exists $(z_3,t_3) \in G_{1/2,t_2-s_2}(y_2,s_2)$ realizing this infimum, 
and so, satisfying~\eqref{good_point_in_the_good_set}. 
Then, the inclusion $G_{1/2,t_2-s_2}(y_2,s_2)\subseteq Q_r(x_2,t_2)$ yields the claim. 
To complete the proof of \eqref{good_point_in_the_good_set}, it remains to check \eqref{eval_infbigg0}, \eqref{eval_chi_z2t2} 
and \eqref{cont_inf}. 
To get \eqref{eval_infbigg0}, observe that, for all $(z,\tau) \in Q_1 \setminus  G_{1/2,t_2-s_2}(y_2,s_2)$, $\tau \leq t_2$, 
\begin{equation*}
 \tau - s_2 - \tfrac{|z-y_2|^2}{2} <0, 
\end{equation*}
and so, 
\begin{equation*}
\chi(z,\tau) > u(z,\tau) - \inf_{Q_1} u - P_{y_1,s_1;\kappa} (z,\tau). 
 \end{equation*}
By \eqref{cond_inf_z1_tau1}, the right-hand side in the inequality above is nonnegative.
Therefore, we have $\chi(z,\tau) > 0$ for all $(z,\tau)\in Q_1 \setminus G_{1/2,t_2-s_2}(y_2,s_2)$, $\tau \leq t_2\leq t_1$. 
By passing to the inf, we obtain~\eqref{eval_infbigg0}. By repeating \eqref{cond_inf_z1_tau1}, we also obtain that
\begin{equation*}
 \chi(y_2,s_2) =u(y_2,s_2) - \inf_{Q_1} u - P_{y_1,s_1;\kappa} (y_2,s_2) \geq 0.
\end{equation*}
For~\eqref{eval_chi_z2t2}, since $(y_2,s_2)\in G^-_{1/2, r^2/16}\left(z_2,t_2 - \frac{r^2}{16}\right)$, we have
\begin{equation*}
t_2-s_2 - \tfrac{|y_2-z_2|^2}{2} \geq \tfrac{r^2}{16}.
\end{equation*}
This implies  $P_{y_2,s_2; \gamma \kappa} (z_2,t_2)\geq \gamma \kappa \tfrac{r^2}{16}$. By using also Step~\ref{stepm1}, we obtain
\begin{equation*}
\chi(z_2,t_2)  <(d\Lambda+3) \kappa \beta \theta \nu^2 r^2 - \gamma \kappa \tfrac{r^2}{16}.
\end{equation*}
By inserting the value of $\gamma$, we get
\begin{equation*}
(d\Lambda+3) \kappa \beta \theta \nu^2 r^2 - \gamma \kappa \tfrac{r^2}{16}
= \tfrac{\kappa  r^2}{16} \left(16 (d\Lambda+3)\beta \theta \nu^2 - \gamma \right)   <0, 
\end{equation*}
and the claim \eqref{eval_chi_z2t2} follows. 

For \eqref{cont_inf}, it is clear that $\zeta$ is non-increasing. Moreover, $\zeta \in \lsc(Q_1)$ since $u \in \lsc(Q_1)$.
The lower semicontinuity and the monotonicity of $\zeta$ imply that $\zeta$ is right-continuous. 
To show that $\zeta$ is left-continuous, we argue by contradiction.
Assume there exist~$\ol t \in (s_2,t_2]$  and a  strictly increasing sequence $r_k \rightarrow \ol t$ such that
\begin{equation}\label{sequence_points_chi_t}
\zeta(r_k) \sublim_{k\rightarrow +\infty} \zeta^-(\ol t)> \zeta(\ol t), \quad \text{with } \zeta(\ol t) < 0,   
\end{equation}
where $\zeta^-(\tau)$ denotes the limit from the left of $\zeta$ at $\tau$. Define  
\begin{equation*}
 P(z,\tau):= P_{y_1,s_1;\kappa} (z,\tau) + P_{y_2,s_2; \gamma \kappa} (z,\tau).
\end{equation*}
By \eqref{sequence_points_chi_t}, we deduce that for all $(z,\tau)\in G_{1/2,t_2-s_2}(y_2,s_2),\tau <\ol t$, 
\begin{equation}\label{ineq_strict_less_tbar}
 u(z,\tau) - \inf_{Q_1} u  - P(z,\tau)\geq \zeta^-(\ol t), 
\end{equation}
and there exists $(\ol z,\ol t)\in G_{1/2,t_2-s_2}(y_2,s_2)$ such that 
\begin{multline}\label{u_chi_ztol}
\zeta(\ol t) = u(\ol z,\ol t) - \inf_{Q_1} u  - P(\ol z,\ol t) = \chi(\ol z,\ol t) \\
= \inf \left\{ \chi(z,\tau) : (z,\tau)\in G_{1/2,t_2-s_2}(y_2,s_2),\tau \leq \ol t \right\}.
\end{multline}
Notice that by using \eqref{cond_inf_z1_tau1},
$\zeta(\ol t)< 0$ implies that $(\ol z,\ol t) \in G_{1/2,t_2-s_2}(y_2,s_2) \setminus \partial_p G_{1/2,t_2-s_2}(y_2,s_2)$. 
Let $\phi$ be a smooth function such that $u-\phi$ has a local minimum at $(\ol z,\ol t)$. 
Denote $\tilde \phi$ the map  
$\tilde \phi(y,s) :=\phi(y,s) - L(s-\ol t)$ with $L\geq 0$ to be selected below.
For $s\geq \ol t$, using that $\tilde \phi (\ol z,\ol t)=\phi(\ol z,\ol t)$, we obtain
\begin{equation}\label{min_local_positive_time}
 u(y,s)-\tilde \phi(y,s)= (u-\phi) (y,s)+ L(s-\ol t)  \geq (u-\tilde \phi) (\ol z,\ol t). 
 \end{equation}
Consider now $s<\ol t$,
\begin{multline*}
 u(y,s)-  \tilde \phi(y,s) \underset{\eqref{ineq_strict_less_tbar}}{\geq} \zeta^-(\ol t) +\inf_{Q_1} u +P(y,s) - \phi(y,s) + L(s-\ol t) \\
 \underset{\eqref{u_chi_ztol}}{\geq} u(\ol z,\ol t)- \tilde \phi(\ol z,\ol t) 
 +P(y,s)-P(\ol z,\ol t) -(\phi(y,s)- \phi(\ol z,\ol t))  + (\zeta^-(\ol t)-\zeta (\ol t))+ L(s-\ol t). 
\end{multline*}
On the set
\begin{multline*}
 \bigg\{(y,s) \in G_{1/2,t_2-s_2}(y_2,s_2) \setminus \partial_p G_{1/2,t_2-s_2}(y_2,s_2): s\leq \ol t \, \text{ and }  \\
  \max \left\{|P(y,s)-P(\ol z,\ol t)|, |\phi(y,s)- \phi(\ol z,\ol t)|, L|s-\ol t| \right\}
 \leq  \frac{1}{4}(\zeta^-(\ol t)-\zeta (\ol t))  \bigg\}, 
\end{multline*}
the following inequality holds true
\begin{equation}\label{min_loc_strict_dep_L}
 u(y,s)-\tilde \phi(y,s)  > u(\ol z,\ol t) - \tilde \phi(\ol z,\ol t)    +\frac{1}{4} (\zeta^-(\ol t)-\zeta (\ol t)), 
\end{equation}
and by putting together the two cases \eqref{min_local_positive_time} and \eqref{min_loc_strict_dep_L}, 
the map $(y,s) \mapsto (u-\tilde \phi)(y,s)$ has a local minimum at $(\ol z,\ol t)$. 
Using that $u$ is a supersolution of \eqref{u_supersol_pucci}, we obtain
\begin{equation*}
0\leq \partial_t \tilde \phi(\ol z,\ol t) + \mathcal{P}^+_{\lambda,\Lambda} (D^2\tilde \phi(\ol z,\ol t)) 
= - L + \partial_t \phi(\ol z,\ol t) + \mathcal{P}^+_{\lambda,\Lambda} (D^2 \phi(\ol z,\ol t)) .
\end{equation*}
Taking $L:=1+ \partial_t \phi (\ol z,\ol t) + \mathcal{P}^+_{\lambda,\Lambda} (D^2 \phi (\ol z,\ol t)) \geq 1$, we get a contradiction.
\end{step}

\begin{step} \label{stepm3} 
 Consider the function 
 $\ol Z : G^-_{1/2, r^2/16}\left(z_2,t_2 - \tfrac{r^2}{16}\right) \rightarrow Q_1$ given by $\ol Z(y,s)=(\ol z(y,s), \ol \tau(y,s))$, where  
 \begin{equation*}
  \left\{   
     \begin{aligned}
       &\ol z(y,s) := \tfrac{1}{\gamma +1} (y_1+ \gamma y),   \\
       &\ol \tau(y,s) := \tfrac{1}{\gamma +1} \left(s_1+ \gamma s -\tfrac{\gamma}{(\gamma+1)^2} |y_1-y|^2 \right).
     \end{aligned}
  \right.
\end{equation*}
 Observe by completing the square that we have  for all $z\in \R^d, \tau \in \R $,   
 \begin{multline*}
  \tfrac{\kappa}{2} |z-y_1|^2 - \kappa (\tau -s_1)+ \tfrac{\gamma \kappa}{2} |z-y_2|^2 - \gamma \kappa (\tau - s_2)   \\
  = \tfrac{(\gamma+1)\kappa}{2} |z-\ol z(y_2,s_2)|^2 - (\gamma +1)\kappa (\tau- \ol \tau(y_2,s_2)). 
 \end{multline*}
 It follows by Step~\ref{stepm2} that the map 
 \begin{equation*}
  (z,\tau) \mapsto u(z,\tau) - \inf_{Q_1}u +\tfrac{(\gamma +1)}{2}|z-\ol z(y_2,s_2)|^2 -  (\gamma +1) (\tau - \ol \tau(y_2,s_2))
 \end{equation*}
 attains its infimum in $Q_1$ at some point of $Q_r(x_2,t_2)$ and this infimum is equal to zero. 
Since $u$ satisfies \eqref{eq_pucci_u_op_ajuste}, we can apply Lemma \ref{lemma_abp_measure_inf} by taking $L=\kappa_1>0$ and we obtain 
 \begin{multline}\label{touch_vert_barZ}
  \left| \ol Z \left( G^-_{1/2, r^2/16}\left(z_2,t_2 - \tfrac{r^2}{16}\right) \right)\right| \\
    \leq \frac{1}{\lambda^d} \left(1+\frac{\kappa_1}{(\gamma +1) \kappa}+\Lambda d\right)^{d+1}|Q_{r}(x_2,t_2) \cap A_{(\gamma+1)\kappa}|.
 \end{multline}
 Since $\gamma \geq 1$ and $\gamma/(\gamma +1) \geq \tfrac{1}{2}$, we deduce by the change of variables formula that 
 \begin{equation}\label{change_variables_formula}
   \left| \ol Z \left( G^-_{1/2, r^2/16}\left(z_2,t_2 - \tfrac{r^2}{16}\right) \right)\right| 
   \geq 2^{-d-1}  \left| G^-_{1/2, r^2/16}\left(z_2,t_2 - \tfrac{r^2}{16}\right) \right|.  
 \end{equation}
By combining the explicit expression of a parabolic ball and \eqref{borne_inf_r_h},
we deduce there exists a numerical constant $0<c<1$ such that
\begin{equation}\label{est_vol_parabolique}
 \left| G^-_{1/2, r^2/16}\left(z_2,t_2 - \tfrac{r^2}{16}\right) \right|\geq \frac{c}{\theta} |G_{\theta, h}(x,t)|. 
\end{equation}
By Lemma~\ref{construct_cylinder_pb} part \ref{inclusion_Kr_pb}, 
$Q_{r}(x_2,t_2) \subseteq G_{\theta, h}(x,t) \cap G^-_{\theta, h_0}(x_0,t_0)$. 
By combining this observation with \eqref{touch_vert_barZ}, \eqref{change_variables_formula} and \eqref{est_vol_parabolique},
recalling that $\kappa \geq \kappa_1$ and taking $M:=\gamma +1$, we obtain that
\begin{equation*}
 |G_{\theta, h}(x,t) \cap A_{M\kappa}\cap G^-_{\theta, h_0}(x_0,t_0)| 
  \geq  \underbrace{\frac{2^{-d-1} \lambda^d}{(1+\frac{1}{\gamma +1}+\Lambda d)^{d+1}} \frac{c}{\theta}}_{:=\sigma} |G_{\theta, h}(x,t)|,
\end{equation*}
as desired. \qedhere
\end{step}
\end{proof}

We next present the proof of the parabolic $W^{2,\eps}$ estimate.

\begin{proof} [Proof of Proposition~\ref{W_2eps_para}]
We begin with four reductions. 
First, we may assume that $L=0$. Otherwise we replace $u$ by $\hat u:=u+Lt$, which is solution of 
\begin{equation*}
 \partial_t\hat u+ \mathcal{P}^+_{\lambda,\Lambda} (D^2\hat u) \geq 0.
\end{equation*}
Let $(x,t)\in Q_1$ and $A>\udl \Theta(\hat u, Q_1)(x,t)$. 
By the definition of $\udl \Theta(\hat u, Q_1)(x,t)$, 
\begin{equation*}
  u(y,s)+Ls \geq  u(x,t) +Lt  +p\cdot(x-y)  - A\left( \frac{1}{2} |x-y|^2 +(t-s)\right) .
\end{equation*}
Since $L\geq 0$ and $t-s \geq 0$, we deduce that, for all $(y,s) \in Q_1$, $s\leq t$,
\begin{equation*}
  u(y,s) \geq  u(x,t)  +p\cdot(x-y)  - A\left( \frac{1}{2} |x-y|^2 +(t-s)\right) .
\end{equation*}
Then $A \geq \udl \Theta(u, Q_1)(x,t)$, and so 
$\udl \Theta(\hat u, Q_1)(x,t)\geq \udl \Theta(u, Q_1)(x,t)$.
Under the assumption the estimate holds true for $\hat u$, we get
 \begin{align*}
        |\{ (x,t)\in Q_{1/2}\left( 0,-\tfrac{1}{4}\right): \udl \Theta (u, & Q_1) (x,t)>\kappa \}|   \\
 & \leq |\{ (x,t)\in Q_{1/2}\left( 0,-\tfrac{1}{4}\right):\udl \Theta (\hat u,Q_1)(x,t)>\kappa \}|  \\ 
 & \leq C \left(\frac{\kappa}{\sup_{Q_1} |\hat u|} \right)^{-\eps}. 
 \end{align*}
Observing that $\sup_{Q_1} |\hat u| \leq \sup_{Q_1} |u| + L$, we get the desired estimate for $u$. 

Next, using the positive homogeneity of $\mathcal{P}^+_{\lambda,\Lambda}$ and $\udl \Theta$ and replacing~$u$ by $\tilde u := u/\sup_{Q_1}|u|$, we may assume that $\sup_{Q_1}|u|= 1$. Finally, by Lemma~\ref{comparison_Aop_Theta}, we have  
 \begin{equation*}
|\{ (x,s)\in Q_{1/2}\left( 0,-\tfrac{1}{4}\right):\udl \Theta (u,Q_1)(x,s)>\kappa \}| 
\leq  \left| Q_{1/2} \left(0,-\tfrac{1}{4} \right) \setminus A_\kappa \right| .
 \end{equation*}
  Thus, it suffices to prove that there exist some universal constants $C$, $\kappa_0$, $\eps>0$ such that for all~$\kappa \geq \kappa_0$, 
\begin{equation}\label{w2eps_Aop}
 \left| Q_{1/2} \left(0,-\tfrac{1}{4} \right) \setminus A_\kappa \right| \leq C  \left(\frac{\kappa}{\kappa_0} \right)^{-\eps} . 
\end{equation}

  \setcounter{step}{0}
\begin{step}[Geometric configuration] \label{step0}
Fix $\frac{3}{4} \leq \theta \leq 5$. We consider the cube 
\begin{equation*}
 C_R(0,T_0):=\left(0+(-R,R)^d\right) \times (T_0 -  H_\text{cube},T_0], 
\end{equation*}
with $T_0\in (-\tfrac{1}{2}, - \tfrac{1}{4}]$, $R>0$ which satisfies 
\begin{equation}\label{up_bound_R}
 R< \frac{1}{(2+16 \theta) \sqrt{d}}, 
\end{equation}
and $H_\text{cube}$, depending on $R$ and $\theta$, given by 
\begin{equation*}
H_\text{cube}:= (1+5\theta) d R^2.  
\end{equation*}
To perform our analysis, we introduce two parabolic balls $G_{1/2,H_1}(0,T_1)$ and $G_{1/2,H_2}(0,T_2)$ (see Figure~\ref{dessin_par_balls_cube})
where the parameters $T_1$, $H_1$, $T_2$ and $H_2$ denote 
\begin{align*}
 & H_1:= \tfrac{1}{2} dR^2 \quad \text{and}\quad  T_1:= T_0 + 4d\theta R^2,  \\
 & H_2:= \tfrac{1}{8} dR^2 \quad \text{and}\quad  T_2:=  T_1 +H_1 - H_2 = T_1 + \tfrac{3}{8} dR^2,  
\end{align*}
and a parabolic ball $G^-_{\theta,H_0} (x_0,t_0)$ with $(x_0,t_0)$ selected at Step~\ref{step1} and 
\begin{equation*}
 H_0  : = T_1-T_0 + H_1+  H_\text{cube} =\left(\tfrac{3}{2} +9\theta \right) dR^2. 
\end{equation*}
To be well-defined, our argument requires that $G_{1/2,H_1}(0,T_1)$ and $G_{1/2,H_2}(0,T_2)$ are contained in~$Q_1$. In particular, we need to check the condition $T_1+H_1 < 0$. 
By using the explicit expressions of $T_1$, $H_1$ and~\eqref{up_bound_R}, we get
\begin{equation*}
T_1+H_1= T_0 + 4 d \theta R^2  + \tfrac{1}{2} d R^2 \leq  - \tfrac{1}{4} +   \left( \tfrac{1}{2} + 4\theta \right) d R^2 < 0.
\end{equation*}
\end{step}

\begin{figure}[t]
\begin{center}
 \scalebox{0.93}{
\includegraphics{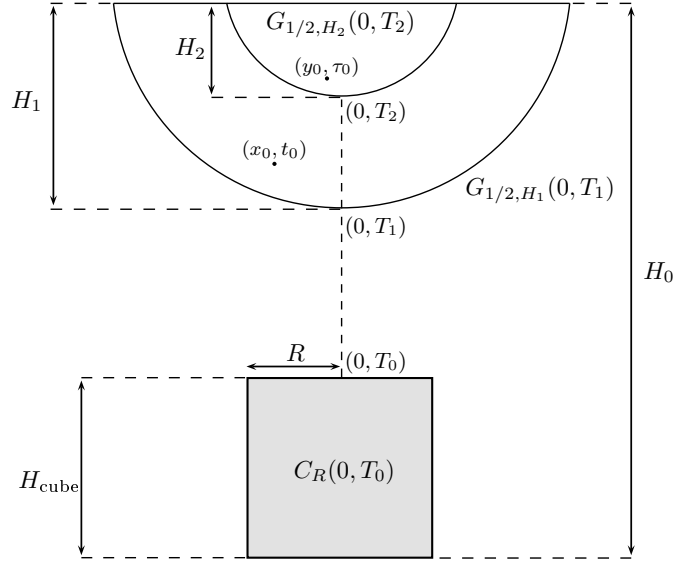} 
 }
\end{center}
\caption{Geometry in the proof of Proposition~\ref{W_2eps_para}: the cube $C_R(0,T_0)$ and the parabolic balls~$G_{1/2,H_1}(0,T_1)$ and $G_{1/2,H_2}(0,T_2)$ such that $C_R(0,T_0) \subseteq G_{\theta,H_0}^-(x_0,t_0)$ with $(x_0,t_0)\in G_{1/2,H_1}(0,T_1)$.
} 
\label{dessin_par_balls_cube} 
\end{figure}

\begin{step} [Existence of the paraboloid for a certain $\kappa=\kappa_0$]   \label{step1}
We claim that there exists $(x_0,t_0) \in G_{1/2,H_1}(0,T_1)$ such that
there exists $(y_0,s_0) \in B_1 \times (-1,t_0]$ such that
\begin{multline}\label{good_initial_point_parab}
 u(x_0,t_0) - \inf_{Q_1} u +\tfrac{\kappa}{2}|x_0-y_0|^2 - \kappa (t_0 - s_0) \\
 =\inf_{(z,\tau) \in Q_1, \tau \leq t_0} \left[ u(z,\tau) - \inf_{Q_1} u +\tfrac{\kappa}{2}|z-y_0|^2 - \kappa (\tau - s_0)  \right]=0.
\end{multline}
In addition,  
\begin{equation} \label{inclusion_cube_par_ball}
C_R(0,T_0) \subseteq G^-_{\theta,  H_0} (x_0,t_0) \subseteq Q_1, 
\end{equation}
and there exists $0<\xi<1$, depending only on $\theta$, such that
\begin{equation}\label{rapport_cylindre_boule_para}
  |C_R(0,T_0)| \geq \xi |G^-_{\theta,  H_0} (x_0,t_0)|.
\end{equation}
To prove the claim given by \eqref{good_initial_point_parab}, first we are going to find $(x_0,t_0)$ realizing the infimum 
for a good choice of $y_0$ and 
$\kappa$ chosen sufficiently large. Then we are going to determine $s_0$ such that this infimum will be equal to zero and we are going to check 
that $-1< s_0 \leq t_0$ to complete the proof.

First select $(y_0,\tau_0)\in G_{1/2,H_2}(0,T_2)$ such that  
\begin{equation*}
 u(y_0,\tau_0)=\inf \left\{ u(z,\tau) : (z,\tau) \in G_{1/2,H_2}(0,T_2) \right\}, 
\end{equation*}
and $\kappa$ such that
\begin{equation}\label{choice_op_init}
 \kappa \geq \kappa_0  \quad \text{with} \quad \kappa_0:= \max \left\{24,  \frac{320}{dR^2} \right\}. 
\end{equation}
Then for all $(z,\tau) \in Q_1$, 
\begin{equation*}
u(z,\tau)+\frac{\kappa}{2} |z-y_0|^2 -\kappa \tau \geq u(y_0,\tau_0)-\kappa \tau_0 -\osc_{Q_1} u +\kappa \left(\frac{1}{2} |z-y_0|^2-(\tau-\tau_0)\right).  
\end{equation*}
We check that $T_2 - T_1 >2 H_2$.
For all $(z,\tau)\in Q_1 \setminus G_{1/2,H_1}(0,T_1)$, $\tau \leq \tau_0$, we have  
 \begin{equation*}
   \tfrac{1}{2} |z-y_0|^2 -(\tau -\tau_0) \geq 
 \begin{cases}
\frac{1}{2} \left(\sqrt{T_2 - T_1} - \sqrt{2H_2}\right)^2 , 
& \text{if } \tau \in \left[\tfrac{1}{2} \left(T_1 + T_2\right),  \tau_0\right],  \\
  -\tfrac{1}{2} \left(T_1 + T_2\right) + \tau_0,  & \text{if } \tau < \tfrac{1}{2} \left(T_1 + T_2 \right). 
 \end{cases} 
 \end{equation*}
By inserting the values of $T_1$, $T_2$ and by using that $\tau_0\in G_{1/2,H_2}(0,T_2)$, we get
  \begin{equation*}
   \tfrac{1}{2} |z-y_0|^2 -(\tau -\tau_0) \geq 
 \begin{cases}
\frac{1}{160} dR^2 , 
& \text{if } \tau \in \left[\tfrac{1}{2} \left(T_1 + T_2\right),  \tau_0\right],  \\
 \tfrac{3}{16} dR^2 ,  & \text{if } \tau < \tfrac{1}{2} \left(T_1 + T_2\right). 
 \end{cases} 
 \end{equation*}
Since $\osc_{Q_1} u \leq 2$, and recalling \eqref{choice_op_init}, this implies that
\begin{multline*}
 \inf \left\{ u(z,\tau)+\tfrac{\kappa}{2} |z-y_0|^2- \kappa \tau :(z,\tau) \in Q_1 \setminus G_{1/2,H_1}(0,T_1), \tau \leq \tau_0 \right\}    \\
 \geq u(y_0,\tau_0)-\kappa \tau_0.
\end{multline*}
Thus there exists $(x_0,t_0) \in G_{1/2,H_1}(0,T_1) $, $t_0\leq \tau_0$, such that
\begin{align*}
u(x_0,t_0) +\tfrac{\kappa}{2} |x_0-y_0|^2 - \kappa t_0
& = \inf_{(z,\tau) \in Q_1, \tau \leq \tau_0} \left( u(z,\tau) + \tfrac{\kappa}{2} |z-y_0|^2-\kappa \tau  \right)\\
& = \inf_{(z,\tau) \in Q_1, \tau \leq t_0} \left( u(z,\tau) + \tfrac{\kappa}{2} |z-y_0|^2-\kappa \tau  \right).
\end{align*}
Now let $s_0 \leq t_0$ be defined by 
\begin{equation*}
s_0: =t_0 - \frac{1}{\kappa} \left( u(x_0,t_0) -\inf_{Q_1} u\right) -\frac{1}{2} |x_1-y_0|^2.
\end{equation*}
Thus, the infimum in \eqref{good_initial_point_parab} is equal to zero.
To complete the proof of the claim, it remains to check that $s_0 > -1$. By using that $u(x_0,t_0)-\inf_{Q_1} u \leq 2$, we get
\begin{equation*}
t_0 - s_0 \leq   \frac{2}{\kappa} + \frac{1}{2} |x_0-y_0|^2.
\end{equation*}
Since $(x_0,t_0),(y_0,\tau_0) \in G_{\theta, H_1} (0,T_1)$, we get by inserting \eqref{up_bound_R} and using $\theta \geq 3/4$  that
\begin{equation*}
|x_0-y_0|^2 \leq \frac{2 H_1}{\theta}= \frac{dR^2}{\theta} 
\leq  \frac{1}{4\theta(1+8 \theta)^2} \leq \frac{1}{147} \leq \frac{1}{6}. 
\end{equation*}
Thus we obtain
\begin{equation*}
t_0 - s_0 \leq \frac{2}{\kappa} + \frac{1}{12}. 
\end{equation*}
By using~\eqref{choice_op_init}, we get 
\begin{equation*}
 t_0 - \tfrac{1}{6} \leq s_0 \leq t_0. 
\end{equation*}
Since $t_0 \in [T_1,T_1+H_1]=[T_0+4d\theta R^2, T_0 +\left(4\theta +\tfrac{1}{2}) dR^2\right]$, in particular, $-\tfrac{1}{2}\leq T_0\leq t_0\leq 0$ 
and we conclude that $s_0> - 1$. This completes the proof of \eqref{good_initial_point_parab}. 

To show \eqref{inclusion_cube_par_ball}, first notice that $|x_0|\leq \sqrt{d}R$ since $(x_0,t_0)\in G_{1/2,H_1}(0,T_1)$. 
Now observe that each $(y,s) \in C_R(0,T_0)$ satisfies $t_0 -s\leq H_0$ and  
\begin{equation*}
|y-x_0| \leq |y| + |x_0|\leq \sqrt{d} R+\sqrt{d} R = 2\sqrt{d} R.  
\end{equation*}
This means $(y,s)\in G^-_{\theta,H_0}(x_0,t_0)$ since $T_1 - T_0\geq 4d\theta R^2$ (see also Figure~\ref{dessin_par_balls_cube}). 
By a direct computation, we check~\eqref{rapport_cylindre_boule_para}.
\end{step}  
  
 \begin{step}\label{step2}   
By Step~\ref{step1}, the point $(x_0,t_0)$ belongs to $A_{\kappa_0}$. By Lemma \ref{comparison_Aop_Theta}, we deduce that, for all $\kappa \geq \kappa_0$, 
$(x_0,t_0)$ belongs to $A_{\kappa}$. Then we can apply Lemma~\ref{measure_estimate_macroscopic_ball} and we get that for all $\kappa \geq \kappa_0$, 
 \begin{equation*}
 |A_{M\kappa} \cap G^{-}_{\theta,H_0}(x_0,t_0)|\geq |G^{-}_{\theta,H_0}(x_0,t_0) \cap A_{\kappa}|
                                            +\sigma \eta |G^{-}_{\theta,H_0}(x_0,t_0) \setminus A_{\kappa} |.
 \end{equation*}
After rearranging the terms, this implies
  \begin{equation} \label{measure_control_one_iteration}
  | G^{-}_{\theta,H_0}(x_0,t_0) \setminus A_{M\kappa}| \leq  (1-\sigma \eta)| G^{-}_{\theta,H_0}(x_0,t_0) \setminus A_{\kappa}|.
  \end{equation}
\end{step}

\begin{step}\label{step3}
   We claim that for all $\kappa>\kappa_0$, we have
 \begin{equation*} 
| C_R(0,T_0) \setminus A_\kappa| \leq \frac{1}{\xi (1- \sigma \eta)} \left(\frac{\kappa}{\kappa_0 }\right)^{-\eps} |C_R(0,T_0)|,
 \end{equation*} 
where $\eps$ is given by $\eps := - \frac{\ln (1- \sigma \eta)}{\ln M}>0$.
First we obtain the decay measure estimate on the parabolic ball $G^{-}_{\theta,H_0}(x_0,t_0)$.  
Let $\kappa>\kappa_0$ and $N$ be the integer defined by
 \begin{equation*}
M^N\kappa_0<\kappa \leq M^{N+1}\kappa_0 
\quad \Longleftrightarrow \quad N:=\left\lceil \tfrac{1}{\ln M}\ln \left(\tfrac{\kappa}{\kappa_0} \right) \right\rceil -1. 
 \end{equation*}
 Here $\left\lceil r  \right\rceil$ denotes, for $r\in \R$, the smallest integer not smaller than $r$.
 Then by using iteratively~\eqref{measure_control_one_iteration} given by Step~\ref{step2}, we deduce that 
 \begin{equation*}
| G^{-}_{\theta,H_0}(x_0,t_0)\setminus A_\kappa| \leq (1- \sigma \eta)^N | G^{-}_{\theta,H_0}(x_0,t_0) \setminus A_{\kappa_0}|.
\end{equation*}
By inserting the value of $N$, we conclude that
\begin{equation*}
|G^{-}_{\theta,H_0}(x_0,t_0) \setminus A_\kappa|
\leq \frac{1}{1-\sigma \eta} |G^{-}_{\theta,H_0}(x_0,t_0)| \left(\frac{\kappa}{\kappa_0}\right)^{-\eps} ,
 \end{equation*}
By combining \eqref{inclusion_cube_par_ball} and \eqref{rapport_cylindre_boule_para} (see Step~\ref{step1}), 
 we come back to the cube $C_R(0,T_0)$ and obtain the desired estimate. 
\end{step}

\begin{step}[Covering argument] \label{step4}
We cover $Q_{1/2}$ by 
\begin{equation*}
 Q_{1/2}\left(0,-\tfrac{1}{4} \right)  \subseteq \bigcup_{1\leq i \leq N} \ol{C}_{R} (X_i,T_i) \subseteq Q_1
 \end{equation*}
where the parabolic cylinders $C_{R} (X_i,T_i):=C_{R} (0,T_0) + (X_i,T_i) - (0,T_0)$ are disjoint. Then 
  \begin{equation*}
\left| Q_{1/2}\left(0,-\tfrac{1}{4} \right)  \setminus A_\kappa \right| \leq  \sum_{i=1}^N | C_R(X_i,T_i) \setminus A_\kappa |.
  \end{equation*}
  By applying Step~\ref{step3} to each parabolic cylinder $C_R(X_i,T_i)$, we get
  \begin{equation*}
   \left| Q_{1/2}\left(0,-\tfrac{1}{4} \right)  \setminus A_\kappa \right|
  \leq \frac{1}{\xi (1-\sigma \eta)} \sum_{i=1}^N |C_R(X_i,T_i)| \left(\frac{\kappa}{\kappa_0}\right)^{-\eps} .
  \end{equation*}
  Since $\ds \sum_{i=1}^N |C_R(X_i,T_i)| \leq |Q_1|$, we get the estimate given by \eqref{w2eps_Aop}.  \qedhere
\end{step}  
\end{proof}

\section{Proof of Theorem~\ref{W3_eps_para}}
\label{part_par_reg_proof_s}

To prove Theorem~\ref{W3_eps_para}, we differentiate the equation to obtain the result from the parabolic $W^{2,\eps}$~estimate obtained in Section~\ref{w_2_eps_par}. 
In the elliptic case, the proof of the elliptic  $W^{3,\eps}$~estimate  strongly uses the $C^{1,\alpha}$~estimates in order to apply $W^{2,\eps}$~estimates on the components of the gradient~$Du$.  
In the parabolic case, the $C^{1,\alpha}$~estimates do not imply that $u$ is differentiable with respect to the time variable. Thus the main new challenge which arises is to upgrade the regularity with respect to time, which is accomplished in Proposition~\ref{key_prop_para_ref}. 

The idea of the proof of Proposition~\ref{key_prop_para_ref} is to separate time and space in order to gain local regularity. First we are going to obtain regularity in space for a fixed time $s$, by applying the~$W^{2,\eps}$~estimate on the derivatives of~$u$. 
This step gives good quadratic approximations in space and, since $u$ solves the PDE, we obtain a first-order approximation of the solution with respect to time. Instead of obtaining directly the estimate like in the elliptic case (see~\cite{armstrong_smart_silvestre}), we proceed by contradiction by considering a local maximum (or minimum) and we use viscosity solution arguments and the uniform ellipticity of the operator. 

Note that we use $u$ is solution of the PDE to obtain a connection between $\Psi(u)$ and $\Theta(u_{x_i})$, $1 \leq i \leq d$, which is different than the elliptic case.

\begin{prop} \label{key_prop_para_ref}
Assume that $F$ satisfies~\ref{f1}, $F(0)=0$ and $g(0,0)=0$.  
Suppose that $u\in C(Q_1)$ is a viscosity solution of~\eqref{eq_para_unif_ell} in $Q_1$ such that $\sup_{Q_1} |u|\leq 1$. 
There exists a universal constant~$C_1>0$ such that, for every $\kappa= (\kappa_1,\cdots, \kappa_d) \in \R_+^d$ such that $|\kappa| \geq 1$,
\begin{multline*}
\bigcap_{i=1}^d \left\{(y,s) \in Q_{1/2}\left(0,-\tfrac{1}{4}\right) : \Theta (u_{x_i}, Q_1)(y,s) \leq \kappa_i \right\}   \\
 \subseteq \left\{(y,s) \in Q_{1/2}\left(0,-\tfrac{1}{4}\right) : \Psi(u,Q_1)(y,s)
  \leq C_1 \left( 1+\left\|g \right\|_{C^{0,1}(Q_1)} \right)|\kappa| \right\}.
\end{multline*}
\end{prop}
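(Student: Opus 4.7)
Plan. Fix $(y,s)\in Q_{1/2}\left(0,-\tfrac14\right)$ with $\Theta(u_{x_i},Q_1)(y,s)\leq\kappa_i$ for every $i$ and $|\kappa|\geq 1$.

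Step 1 (candidate quadratic polynomial). By definition of $\ol\Theta$ and $\udl\Theta$, for each coordinate $i$ there exist $p_i^\pm\in\R^d$ such that
\[
p_i^-\cdot(y'-y)-\kappa_i\bigl(\tfrac12|y-y'|^2+(s-s')\bigr)\leq u_{x_i}(y',s')-u_{x_i}(y,s)\leq p_i^+\cdot(y'-y)+\kappa_i\bigl(\tfrac12|y-y'|^2+(s-s')\bigr)
\]
on $Q_1\cap\{s'\leq s\}$. Specializing to $s'=s$ and varying $y'$ along $\pm(p_i^+-p_i^-)$, the standard pinching forces $p_i^+=p_i^-=:p_i$; the two-sided inequality then persists with this common $p_i$ on all of $Q_1\cap\{s'\leq s\}$. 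Set $p:=Du(y,s)$, let $M\in\mathbb{S}_d$ be the symmetric part of the matrix with rows $p_i$, define $b:=g(y,s)-F(M)$, and form the candidate expansion $P(y',s'):=u(y,s)+p\cdot(y'-y)+b(s'-s)+\tfrac12(y'-y)^\top M(y'-y)$.

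Step 2 (spatial cubic bound by integration). Writing $u(y',s')-u(y,s')=\int_0^1\sum_i u_{x_i}(y+t(y'-y),s')(y'-y)_i\,dt$ and substituting the pinched estimate from Step~1, the leading terms reproduce $p\cdot(y'-y)+\tfrac12(y'-y)^\top M(y'-y)$ and the remainder is bounded by $\tfrac{|\kappa|}{6}|y'-y|^3+|\kappa|(s-s')|y'-y|$ after Cauchy--Schwarz. The parabolic Young inequality $ab^2\leq\tfrac13 a^3+\tfrac23 b^3$ with $a=|y'-y|$, $b=(s-s')^{1/2}$ absorbs the cross term and yields
\[
\bigl|u(y',s')-u(y,s')-p\cdot(y'-y)-\tfrac12(y'-y)^\top M(y'-y)\bigr|\leq\tfrac{|\kappa|}{2}|y'-y|^3+\tfrac{2|\kappa|}{3}(s-s')^{3/2}.
\]
This reduces the proposition to controlling the one-dimensional time profile $\delta(s'):=u(y,s')-u(y,s)-b(s'-s)$ by $C|\kappa|(1+\|g\|_{C^{0,1}})(s-s')^{3/2}$.

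Step 3 (time profile via viscosity comparison). This is where the PDE enters a second time. Let $v:=u-P$. Since $u$ solves $\partial_tu+F(D^2u)=g$ and $D^2P\equiv M$, uniform ellipticity gives, in the viscosity sense,
\[
\partial_tv+\mathcal{P}^+_{\lambda,\Lambda}(D^2v)\geq g-g(y,s),\qquad \partial_tv+\mathcal{P}^-_{\lambda,\Lambda}(D^2v)\leq g-g(y,s),
\]
with $|g-g(y,s)|\leq\|g\|_{C^{0,1}}(|y'-y|+(s-s')^{1/2})$. I plan to compare $v$ with the test functions $\Pi^\pm(y',s'):=\pm A\bigl(|y'-y|^3+(s-s')^{3/2}\bigr)$, where $A=C_0|\kappa|(1+\|g\|_{C^{0,1}})$ for $C_0$ a large universal constant. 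A direct computation using the eigenvalues $6|y'-y|$ and $3|y'-y|$ of $D^2|y'-y|^3$ and $\partial_{s'}(s-s')^{3/2}=-\tfrac32(s-s')^{1/2}$ shows that, for $C_0$ large, $\Pi^+$ is a strict classical supersolution of $\partial_t\cdot+\mathcal{P}^-_{\lambda,\Lambda}(D^2\cdot)=g-g(y,s)$ and $\Pi^-$ a strict subsolution of $\partial_t\cdot+\mathcal{P}^+_{\lambda,\Lambda}(D^2\cdot)=g-g(y,s)$. If the required bound on $\delta$ failed, Step~2 would force $v-\Pi^+$ (or $\Pi^--v$) to attain a strict interior maximum on a backward parabolic cylinder centered at $(y,s)$, which is inconsistent with the above viscosity inequalities. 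This yields $|v|\leq A(|y'-y|^3+(s-s')^{3/2})$ throughout $Q_1\cap\{s'\leq s\}$, which is the asserted bound on $\Psi(u,Q_1)(y,s)$ with $C_1\sim C_0$.

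Main obstacle. Only Step~3 is genuinely new relative to the elliptic case. The $C^{1,\alpha}$ theory provides no time-regularity of $u$, so $b$ can only be recovered from the equation and the $3/2$-H\"older temporal profile must be extracted through a parabolic comparison argument rather than by differentiating the PDE. The delicate calibration is tuning $C_0$ universally so that the Pucci evaluation of $A\,D^2|y'-y|^3$ together with $\tfrac{3A}{2}(s-s')^{1/2}$ dominates both the Lipschitz wobble of $g$ near $(y,s)$ and the spatial residual from Step~2, simultaneously in the $|y'-y|$ and $(s-s')^{1/2}$ directions; this is precisely what produces the factor $C_1(1+\|g\|_{C^{0,1}})|\kappa|$ in the statement.
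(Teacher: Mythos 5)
Your Steps 1--2 are sound and match the paper's Steps~\ref{step3eps1}--\ref{step3eps2}: the symmetric matrix $M$ built from the pinched slopes $p^i$, the choice $b=g(y,s)-F(M)$, the slice estimate by integrating along segments, and the parabolic Young inequality all appear in essentially the same way. The genuine gap is in your Step~3, which you correctly identify as the only new difficulty --- and it is precisely there that the proposed barrier fails.

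Your claim that $\Pi^+(y',s')=A\bigl(|y'-y|^3+(s-s')^{3/2}\bigr)$ is a strict supersolution of $\partial_t\cdot+\mathcal{P}^-_{\lambda,\Lambda}(D^2\cdot)=g-g(y,s)$ is false, and taking $C_0$ (hence $A$) large makes it \emph{worse}, not better. Indeed
\begin{equation*}
\partial_{s'}\Pi^+ + \mathcal{P}^-_{\lambda,\Lambda}(D^2\Pi^+)
= -\tfrac{3A}{2}(s-s')^{1/2} - 3\Lambda(d+1)A\,|y'-y|,
\end{equation*}
because $\partial_{s'}(s-s')^{3/2}=-\tfrac32(s-s')^{1/2}$ and $D^2|y'-y|^3$ is positive definite with eigenvalues $6|y'-y|$ and $3|y'-y|$, so the Pucci evaluation is $-\Lambda\bigl(6+3(d-1)\bigr)A|y'-y|$. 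Both terms have the \emph{same} sign, and both are proportional to $A$. The right-hand side $g-g(y,s)$ is bounded by $\|g\|_{C^{0,1}}(|y'-y|+(s-s')^{1/2})$ independently of $A$, so for $A$ large the left-hand side is far \emph{below} the right-hand side: $\Pi^+$ is a strict \emph{sub}solution, not a supersolution. Consequently an interior positive maximum of $v-\Pi^+$ is perfectly compatible with the viscosity inequality $\partial_tv+\mathcal{P}^-(D^2v)\le g-g(y,s)$, and no contradiction is available. The same sign problem, mirrored, sinks the $\Pi^-$ comparison. Separately, even if the barrier were a supersolution, you would still need to check $v\le\Pi^+$ on the lateral boundary of the comparison region, and you never use $\sup_{Q_1}|u|\le1$ anywhere --- but this is secondary to the sign defect.

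The paper's fix is structurally different and worth internalizing. One fixes a time $s'$ and works on the slab $B_{3/4}\times(s',t_0]$. The barrier $\phi$ is the quadratic $P$ based at time $s'$, plus a \emph{constant} temporal offset $\tfrac{C_0}{6}|\kappa|\,|s'-t_0|^{3/2}$ (not the time-dependent cube $(s-s')^{3/2}$), plus the cubic spatial term $\tfrac{C_0}{6}|\kappa|\,|y-x_0|^3$, plus a \emph{linear} time correction $\bigl(\tfrac{C_0\beta}{4}+2\|g\|_{C^{0,1}}\bigr)|\kappa|\,|s'-t_0|^{1/2}\,(s-s')$ with $\beta=\max\{1,2\Lambda(d+1)\}$. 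This makes $\partial_t\phi$ a \emph{positive constant} above $b$, so it can absorb the negative Pucci contribution of the cubic Hessian. That absorption only works because the comparison is further restricted (Step~\ref{step3eps5} of the paper) to the narrow region $|y-x_0|<|s'-t_0|^{1/2}$, where $\mathcal{P}^-$ of the cubic Hessian is controlled by $C_0|\kappa|\,|s'-t_0|^{1/2}$, the same scale as the linear correction. The lateral boundary $|y|=3/4$ is handled separately (Step~\ref{step3eps4}) by a contradiction argument that crucially uses $\sup_{Q_1}|u|\le1$ and the a priori bound on $b$ (Step~\ref{step3eps3}). Evaluating $u\le\phi$ at $(x_0,t_0)$ then yields the time estimate at time $s'$, and varying $s'$ gives the full bound. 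Your proposal is missing the constant-offset-plus-linear-correction structure that makes the supersolution computation close, and this is not a cosmetic modification: the naive $(s-s')^{3/2}$ barrier pushes the time derivative the wrong way.
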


\begin{proof} Fix $\kappa= (\kappa_1,\cdots, \kappa_d) \in \R_+^d$ and  $(x_0,t_0) \in Q_{1/2}\left(0,-\tfrac{1}{4}\right)$ 
such that, for every $1\leq i \leq d$, 
\begin{equation*}
 \Theta (u_{x_i}, Q_1) (x_0,t_0) \leq \kappa_i.
\end{equation*}
For all $1\leq i \leq d$, we select $p^i\in \R^d$ such that, for every $y\in B_1$ and $-1 < s\leq  t_0$, 
\begin{equation}\label{gradient_p_i_estimate}
 |u_{x_i}(y,s) - u_{x_i}(x_0,t_0) - p^i\cdot(y - x_0)| \leq \kappa_i \left(\tfrac{1}{2}   |x_0-y|^2 + t_0 - s \right).
\end{equation}
Define the polynomial approximation $P$ to $u$ given by
\begin{equation}\label{def_poly_approx}
P(y,s)= u(x_0,t_0) + b(s-t_0)+ Du(x_0,t_0)\cdot (y-x_0) +\tfrac{1}{2} \langle (y-x_0), M (y-x_0)\rangle
\end{equation}
where $b$ and $M$ are going to be chosen. First we set $M:=(p^i_j)\in \mathbb{M}_d$. 
 Then to fix $b$, up to replacing~$M$ by $\frac{M+M^\top}{2}$ in \eqref{def_poly_approx}, we can assume that $M\in \mathbb{S}_d$ and then we take 
\begin{equation}\label{choix_B_M}
 b:= - F\left(M\right) +g(x_0,t_0).
\end{equation}
To estimate the difference between $P$ and $u$, we separate the difference into two parts, a space term and a time term 
\begin{multline*}
|u(y,s)-P(y,s)|  \leq | u(y,s) - u(x_0,s) - Du(x_0,t_0)\cdot (y-x_0) -\tfrac{1}{2} \langle (y-x_0), M (y-x_0)\rangle  | \\
  +  |u(x_0,s) - u(x_0,t_0) - b(s-t_0)|.
\end{multline*}
We proceed in eight steps. Steps \ref{step3eps1}--\ref{step3eps2} provide the upper bound on the space term. Then by introducing an adequate test function and arguing by a comparison principle argument, 
Steps \ref{step3eps3}--\ref{step3eps7} will give the corresponding upper bound on the time term.
Finally, we will conclude in Step 8. 

 \setcounter{step}{0}
 \begin{step} \label{step3eps1}
 We claim that for all $1\leq i \leq d$, $(y,s)\in Q_1$ with  $s\leq t_0$, 
\begin{equation}\label{theta_est_gradient}
   |u_{x_i}(y,s)  - u_{x_i}(y,t_0)| \leq  \kappa_i \left( |x_0-y|^2 + t_0-s    \right).
 \end{equation}
To prove this, notice that 
\begin{equation*} 
 |u_{x_i}(y,t_0) - u_{x_i}(x_0,t_0) - p^i\cdot(y-x)| \leq  \tfrac{\kappa_i}{2}   |x_0-y|^2.
\end{equation*}
By combining this inequality with \eqref{gradient_p_i_estimate}, we get 
\begin{align*} 
  |u _{x_i}& (y,s)  - u_{x_i}(y,t_0)| \\
& \leq |u_{x_i}(y,s) - u_{x_i}(x_0,t_0) - p^i\cdot(y-x_0)| + | u_{x_i}(y,t_0) - u_{x_i}(x_0,t_0) - p^i\cdot(y-x_0)| \\
& \leq   \kappa_i \left(\tfrac{1}{2}   |x_0-y|^2 + t_0-s  \right) +  \tfrac{\kappa_i}{2}   |x_0-y|^2 .
\end{align*}
The proposed estimate \eqref{theta_est_gradient} directly follows.  
 \end{step}
\begin{step} \label{step3eps2} 
  We next prove the following ``slice estimate'': for every $(y,s)\in Q_1$ such that $-1 < s\leq t_0$, we have
 \begin{multline}\label{ineq_space_after_young}
 \left| u(y,s) - u(x_0,s) - Du(x_0,t_0) \cdot (y-x_0) - \tfrac{1}{2} \langle (y-x_0), M(y-x_0) \rangle  \right|  \\  
  \leq |\kappa| \left(\tfrac{7}{6} |y-x_0|^3+  \tfrac{\sqrt{2}}{3} |s-t_0|^{3/2} \right).
\end{multline}
Since $u\in C^1$ with respect to the space variable, we can write 
\begin{align*}
I:&=\left| u(y,s) - u(x_0,s) - Du(x_0,t_0) \cdot (y-x_0) - \tfrac{1}{2} \langle (y-x_0), M(y-x_0) \rangle  \right|  \\
  &=\left| (y-x_0) \cdot \int_0^1 Du(x_0+\tau(y-x_0), s) - Du(x_0,t_0) - \tau M(x_0-y_0) d\tau  \right|. 
\end{align*}
It is clear that $I \leq I_1 + I_2$ where $I_1$ and $I_2$ respectively denote 
\begin{equation*}
 I_1:= \left|(y-x_0) \cdot \int_0^1 Du(x_0+\tau(y-x_0), s)  - Du(x_0+\tau(y-x_0)), t_0) d\tau  \right| 
\end{equation*}
and 
\begin{equation*}
I_2:= \left| (y-x_0)  \cdot \int_0^1 Du(x_0+\tau(y-x_0), t_0) - Du(x_0,t_0) - \tau M(x_0-y_0) d\tau  \right|.
\end{equation*}
It remains to determine some upper bounds on $I_1$ and $I_2$.
For $I_1$, applying successively Cauchy-Schwarz inequality and Step 1 yields that 
\begin{align*}
I_1 & \leq |y-x_0| \int_0^1 \left| Du(x_0+\tau(y-x_0),s) - Du(x_0+\tau(y-x_0)), t_0)\right|  d\tau   \\
      & \leq |y-x_0| \int_0^1 |\kappa| (\tau^2|y-x_0|^2+ t_0-s) d\tau \\ 
      & \leq  |\kappa| \left(\tfrac{1}{3}|y-x_0|^3+|y-x_0|  |s-t_0| \right).
\end{align*}
We estimate $I_2$ by using~\eqref{gradient_p_i_estimate} and the same computations than those used in~\cite{armstrong_smart_silvestre}.
For sake of completeness and reader convenience, we give here the arguments. 
According to~\eqref{gradient_p_i_estimate}, 
\begin{equation*}
 |u_{x_i}(x_0+\tau(y-x_0),t_0) - u_{x_i}(x_0,t_0) - \tau p^i\cdot(y-x_0)| \leq  \frac{1}{2} \kappa_i \tau^2  |y-x_0|^2 .
\end{equation*}
By Cauchy-Schwarz inequality,
\begin{equation*}
I_2\leq  |y-x_0| \int_0^1  \frac{1}{2} \left(\sum_{i=1}^d \kappa_i^2\right)^{1/2} \tau^2  |y-x_0|^2  d\tau 
    = \tfrac{|\kappa|}{6} |y-x_0|^3 .
\end{equation*}
To obtain~\eqref{ineq_space_after_young}, it suffices to apply Young inequality which yields
\begin{equation*}
|y-x_0|   |s-t_0| \leq \frac{2}{3} |y-x_0|^3 +\frac{\sqrt{2}}{3}  |s-t_0|^{3/2}, 
\end{equation*}
and a simple calculation gives~\eqref{ineq_space_after_young}.
\end{step}
To complete the proof of Proposition \ref{key_prop_para_ref}, it remains now to get a similar estimate for the term in time.
We will show that there exists a universal constant $C_2 \geq 1$ such that, for all $-1 < s' \leq t_0$, 
\begin{equation} \label{ineq_time_x0}
 \left|  u(x_0,s') - u(x_0,t_0) - b(s'-t_0) \right| \leq  C_2  \left(1+\left\|g \right\|_{C^{0,1}(Q_1)}\right)  |\kappa| |t_0 - s'|^{3/2}.
\end{equation}
We are going to prove that, for all $-1 < s' \leq t_0$, 
\begin{equation} \label{ineq_time_x0_lower_bound}
  u(x_0,s') - u(x_0,t_0) - b(s'-t_0) \geq  - C_2 \left(1+\left\|g \right\|_{C^{0,1}(Q_1)} \right) |\kappa| |t_0 - s'|^{3/2}, 
\end{equation}
the argument for the reverse inequality is entirely parallel. 
To do this, we will consider a suitable test function taking into account the size of $b$.
\begin{step}\label{step3eps3} 
We claim that the following a priori bound on $b=-F(M)+g(x_0,t_0)$ holds 
\begin{equation}\label{borne_B_gamma}
 |b| \leq  c_2 \left(|\kappa|+1 +\left\|g \right\|_{C^{0,1}(Q_1)} \right) ,  
\end{equation}
where $c_2>0$ a universal constant. \\
By considering \eqref{gradient_p_i_estimate} for $s=t_0$ and selecting $y\in \ol B_{3/4} \subseteq B_1$ 
such that $ x_0-y = \frac{1}{4} \frac{p^i}{|p^i|}$ if $p^i \neq 0$, we get
\begin{multline*}\label{}
\tfrac{1}{4} |p^i| - \sup_{y\in \ol B_{3/4}} |u_{x_i}(y,t_0) - u_{x_i}(x_0,t_0)| \\
\leq  |u_{x_i}(y,t_0) - u_{x_i}(x_0,t_0) - p^i\cdot(x_0-y)| \leq \tfrac{\kappa_i }{2}   |x_0-y|^2 .
\end{multline*}
By rearranging the terms, we obtain
\begin{equation*}
\tfrac{1}{4} |p^i| \leq  \tfrac{9}{32}\kappa_i + \sup_{y\in \ol B_{3/4} } |u_{x_i}(y,t_0) - u_{x_i}(x_0,t_0)|
\leq  \tfrac{9}{32}\kappa_i +  2 \sup_{\ol Q_{3/4}} |Du|.
\end{equation*}
Now by Proposition~\ref{int_reg_parabolic}, there exists a universal constant $\widetilde C>0$ such that
\begin{equation*}\label{int_para_estimate_o1}
\sup_{\ol Q_{3/4}} |Du|  \leq \widetilde C \left(1+\left\|g \right\|_{C^{0,1}(Q_1)} \right). 
\end{equation*}
We deduce that there exists a universal constant $c>9/8$ such that, for all $1\leq i \leq d$, 
\begin{equation*}
 |p^i| \leq c \left(\kappa_i+ 1 + \left\|g \right\|_{C^{0,1}(Q_1)} \right). 
\end{equation*}
If $\norm{M}$ denotes the $(L^2,L^2)$-norm of $M$, i.e. $\norm{M}= \sup_{|x|=1}|Mx|$, then our choice for $M=(p^i_j)$ implies that 
\begin{equation*}
 \norm{M} \leq d c \left(|\kappa| +1 +\left\|g \right\|_{C^{0,1}(Q_1)} \right). 
\end{equation*}
Since $F$ satisfies~\ref{f1} and $F(0)=0$, it is immediate to check that for all $N\in \mathbb{S}_d$, $|F(N)|\leq d\Lambda \norm{N}$. 
Thus, by using \eqref{choix_B_M}, $g(0,0)=0$ and  $(x_0,t_0) \in Q_1$, we get 
\begin{equation*} 
 |b| \leq d \Lambda \norm{M} +|g(x_0,t_0)| \leq d \Lambda \norm{M} +\left\|g \right\|_{C^{0,1}(Q_1)},
\end{equation*}
and the claim \eqref{borne_B_gamma} easily follows.
\end{step}

\begin{step}\label{step3eps4}  
Let $s' \in (-1,t_0]$. Next we show that  
  \begin{equation*} 
   \phi \geq u \qquad \text{holds on }\partial_{p} \left(B_{3/4} \times \{s'< s \leq  t_0 \} \right), 
 \end{equation*}
where we have defined
 \begin{multline*}
 \phi(y,s) :=  u(x_0,s')    +  Du(x_0,t_0) \cdot (y-x_0) +\tfrac{1}{2}  \langle (y-x_0), M(y-x_0) \rangle \\
 + \left(b+\left( \tfrac{C_0\beta}{4} + 2 \left\|g \right\|_{C^{0,1}(Q_1)} \right) |\kappa| |s'-t_0|^{1/2} \right) (s-s') 
  + \tfrac{C_0}{6} |\kappa| (|y-x_0|^3 + |s'-t_0|^{3/2})  
 \end{multline*}
 where $\beta:=\max \{1, 2\Lambda (d+1) \}$ and $C_0$ is a constant
depending on $d$, $\lambda$, $\Lambda$ and $\left\|g \right\|_{C^{0,1}(Q_1)}$ given by 
\begin{equation}\label{cond_C0}
C_0 := 768 \left(c_2 \left(1+ \left\|g \right\|_{C^{0,1}(Q_1)}\right)  +3 \right).
\end{equation}
First we claim that $\phi \geq u $ on $\ol B_{3/4} \times \{s'\}$.
To prove this, it follows from~\eqref{ineq_space_after_young} that for all $y\in \ol B_{3/4} \subseteq B_1$, 
  \begin{multline*} 
 u(y,s') \leq  u(x_0,s') + Du(x_0,t_0) \cdot (y-x_0) + \tfrac{1}{2} \langle (y-x_0), M(y-x_0) \rangle    \\
 + |\kappa|  \left(\tfrac{7}{6} |y-x_0|^3+  \tfrac{\sqrt{2}}{3} |s'-t_0|^{3/2} \right).
 \end{multline*}
Since $\tfrac{C_0}{6} \geq  \max \left\{\tfrac{7}{6}, \tfrac{\sqrt{2}}{3} \right\}$, this yields that 
\begin{equation*}
 u(\cdot, s')\leq \phi(\cdot, s')  \quad \text{on } \ol B_{3/4}. 
\end{equation*}
Then we claim that $\phi \geq u $ on $\{(y,s) : |y|=3/4 , s'< s \leq t_0  \}$.
Arguing by contradiction, assume that there exists $(x_1,t_1)$ with $|x_1|=3/4$, $s'<t_1\leq t_0$ such that
\begin{equation*}
\phi(x_1,t_1) < u(x_1,t_1).
\end{equation*}
Then, 
\begin{equation*}
\phi(x_1,t_1) - u(x_1,s') <  |u(x_1,t_1) - u(x_1,s')| \leq  2.
\end{equation*}
Moreover,
\begin{multline*}
\phi(x_1,t_1) - u(x_1,s') 
= \tfrac{C_0}{6} |\kappa| \left(|x_1-x_0|^3 + |s'-t_0|^{3/2}  \right)  \\
   + u(x_0,s') - u(x_1,s')  +Du(x_0,t_0) \cdot (x_1-x_0) 
 +\tfrac{1}{2}  \langle (x_1-x_0), M(x_1-x_0) \rangle \\
 + \left(b+ \left( 2 \left\|g \right\|_{C^{0,1}(Q_1)}  +\tfrac{C_0}{4}\beta \right) |\kappa| |s'-t_0|^{1/2}\right) \underbrace{(t_1-s')}_{\geq 0} .
\end{multline*}
Neglecting the nonnegative time terms, we obtain  
\begin{multline*}
\phi(x_1,t_1) - u(x_1,s') \geq \tfrac{C_0}{6} |\kappa| |x_1-x_0|^3 - |b| |t_1-s'| \\
    - |u(x_0,s') - u(x_1,s') +Du(x_0,t_0) \cdot (x_1-x_0) +\tfrac{1}{2}  \langle (x_1-x_0), M(x_1-x_0) \rangle| .
\end{multline*}
By combining the slice estimate \eqref{ineq_space_after_young} and the bound on $b$ given by \eqref{borne_B_gamma}, 
we obtain that 
\begin{multline*}
\phi(x_1,t_1) - u(x_1,s') \geq   \tfrac{C_0}{6}  |\kappa| |x_1-x_0|^3 \\ 
-c_2 \left(|\kappa|+1 +  \left\|g \right\|_{C^{0,1}(Q_1)} \right) |t_1-s'|  
                                -|\kappa| \left(\tfrac{7}{6} |x_1-x_0|^3+  \tfrac{\sqrt{2}}{3} |t_1-t_0|^{3/2} \right). 
\end{multline*}
Using that $\tfrac{1}{4}\leq |x_1-x_0| \leq \tfrac{5}{4}$, $|t_1-t_0|\leq 1$ and $|t_1-s'|  \leq 1$, the inequality above simply reduces to 
\begin{equation*}
\tfrac{C_0}{384} |\kappa| < \left(c_2 \left(1+ \left\|g \right\|_{C^{0,1}(Q_1)} \right) +3 \right)(|\kappa|+1). 
\end{equation*}
By inserting \eqref{cond_C0}, we obtain $2 |\kappa|  <|\kappa| + 1$, and we get a contradiction if $|\kappa| \geq 1$. 
\end{step}

\begin{step}\label{step3eps5} 
We claim that:
\begin{equation*} 
u- \phi \text{ can attain a positive global maximum only in }  \left\{(y,s) : |y-x_0| < |s-t_0|^{1/2} \right\}.  
\end{equation*}
By Step \ref{step3eps4}, we may assume that $u- \phi$ attains a positive global maximum at $(x_1,t_1) \in B_{3/4}\times \{s' < s \leq t_0\}$:
\begin{equation*} 
(u-\phi)(x_1,t_1) = \sup_{ B_{3/4} \times \{s' < s \leq  t_0 \}} (u-\phi)(y,s). 
 \end{equation*}
In particular,  
\begin{equation*}
  (u-\phi)(x_1,t_1) \geq (u-\phi)(x_0,t_1).
\end{equation*}
A direct computation yields
\begin{equation*}
 \phi(x_1,t_1) -\phi(x_0,t_1)= Du(x_0,t_0)\cdot (x_1-x_0)+\tfrac{1}{2} \langle (x_1-x_0), M(x_1-x_0) \rangle 
 + \tfrac{C_0}{6} |\kappa| |x_1-x_0|^3.
\end{equation*}
After  rearranging the terms, we obtain
\begin{multline}\label{space_minor_y0s0}
u(x_1,t_1)-u(x_0,t_1)- Du(x_0,t_0) \cdot (x_1-x_0)-\tfrac{1}{2} \langle (x_1-x_0),M(x_1-x_0) \rangle  \\
\geq \tfrac{C_0}{6} |\kappa| |x_1-x_0|^3.  
\end{multline}
Moreover, we know by \eqref{ineq_space_after_young} that 
\begin{multline*} 
 | u(x_1,t_1)  - u(x_0,t_1)  - Du(x_0,t_0) \cdot (x_1-x_0) - \tfrac{1}{2}  \langle (x_1-x_0), M(x_1-x_0) \rangle  |  \\ 
 \leq |\kappa|  \left( \tfrac{7}{6}  |x_1-x_0|^3   +\tfrac{\sqrt{2}}{3}  |t_1-t_0|^{3/2}  \right).
\end{multline*}
By using \eqref{space_minor_y0s0} we deduce from the inequality above that 
\begin{equation*}
 |\kappa|  \left(  \tfrac{7}{6}  |x_1-x_0|^3 +\tfrac{\sqrt{2}}{3} |t_1-t_0|^{3/2} \right) \geq \tfrac{C_0}{6} |\kappa| |x_1-x_0|^3 .  
 \end{equation*}
Since $\frac{C_0}{6} \geq \tfrac{7}{3}$, we obtain 
\begin{equation*}
 |x_1-x_0| \leq  \left( \tfrac{2 \sqrt{2} }{7}\right)^{1/3}|t_1-t_0|^{1/2}.  
\end{equation*}
In particular, this yields the desired claim.  
\end{step}

\begin{step}\label{step3eps6} 
We next show that $u- \phi$ cannot achieve any local maximum in the cylinder~$\widetilde Q$ given by 
\begin{equation*}
\widetilde Q : =\{(y,s) \in B_1 \times [s',t_0]: |y-x_0| <  |s'-t_0|^{1/2} \}, 
\end{equation*}
by arguing that $\phi$ is a strict supersolution in $\widetilde Q$ i.e., 
\begin{equation}\label{phi_strict_supersol}
 \partial_t \phi+ F(D^2\phi) > g, \quad \text{in } \widetilde Q.
\end{equation}
We verify \eqref{phi_strict_supersol} by a direct computation.
By the uniform ellipticity condition \ref{f1}, and noticing that the perturbation is a positive matrix, 
\begin{equation*}
  F\left(M + \frac{C_0 |\kappa| }{2} |y-x_0| \left[I_d+ \frac{y-x_0}{|y-x_0|} \otimes  \frac{y-x_0} {|y-x_0| } \right]  \right)
  \geq F(M) - \frac{\Lambda(d+1)}{2} C_0 |\kappa| |y-x_0|.
\end{equation*}
This inequality yields 
\begin{multline*}  
 b   + \left( 2 \left\|g \right\|_{C^{0,1}(Q_1)}  + \frac{\beta}{4} C_0 \right) |\kappa| |s'-t_0|^{1/2} \\
 + F\left(M+ \frac{C_0|\kappa|}{2} |y-x_0| \left[ I_d + \frac{y-x_0}{|y-x_0|} \otimes  \frac{y-x_0} {|y-x_0|}  \right] \right)  \\
  \geq b +2 \left\|g \right\|_{C^{0,1}(Q_1)} |\kappa| |s'-t_0|^{1/2} + F(M) + \frac{\beta}{4} C_0 |\kappa| |s'-t_0|^{1/2} - \frac{\Lambda(d+1)}{2}C_0 |\kappa| |y-x_0|.
\end{multline*}
For our choice given by \eqref{choix_B_M}, we have $b +F(M)=g(x_0,t_0)$. 
Recalling the value of $\beta$ and $|\kappa|\geq 1$, we obtain that the following inequality holds in $\widetilde Q$: 
\begin{multline*}
 b + \left(2 \left\|g \right\|_{C^{0,1}(Q_1)}  + \frac{\beta}{4} C_0 \right) |\kappa| |s'-t_0|^{1/2} \\
  + F\left(M+ \frac{C_0|\kappa|}{2} |y-x_0| \left[I_d + \frac{y-x_0}{|y-x_0|} \otimes  \frac{y-x_0} {|y-x_0|}  \right] \right)   \\
 > \left\|g \right\|_{C^{0,1}(Q_1)}\left(|y-x_0| + |s-t_0|^{1/2} \right) +g(x_0,t_0) \geq g(y,s).
\end{multline*}
This confirms \eqref{phi_strict_supersol}. 
\end{step}

\begin{step}\label{step3eps7} 
By Steps \ref{step3eps4}--\ref{step3eps6}, $u-\phi$ cannot achieve any positive global maximum. Hence, 
\begin{equation*} 
 u \leq \phi \quad \text{in } \ol B_{3/4}  \times [s', t_0] .
\end{equation*}
In particular, for $s=t_0$,  $u(x_0,t_0) \leq \phi(x_0,t_0)$, this yields
\begin{equation*}
 u(x_0,t_0) \leq  u(x_0,s') +b (t_0-s') 
 + \left(\tfrac{C_0 \beta}{4}+ 2 \left\|g \right\|_{C^{0,1}(Q_1)} \right) |\kappa| |t_0 - s'|^{3/2}+\tfrac{C_0}{6} |\kappa| |t_0 - s'|^{3/2}.
\end{equation*}
After rearranging the terms, we get
\begin{equation*}
 - \left(C_0 \left(\tfrac{1+\beta}{4} \right) + 2 \left\|g \right\|_{C^{0,1}(Q_1)} \right) 
 |\kappa| |t_0 - s'|^{3/2}  \leq  u(x_0,s') - u(x_0,t_0) - b(s'-t_0) .
\end{equation*}
Thus, by using \eqref{cond_C0}, it follows \eqref{ineq_time_x0_lower_bound} by taking $C_2:=192 (1+\beta) (c_2+3)$.
\end{step} 

\begin{step}\label{step3eps8} We conclude the argument. 
By combining both \eqref{ineq_space_after_young} and \eqref{ineq_time_x0}, we deduce that, for all $(y,s)\in Q_1$, $s\leq t_0$, 
\begin{align*}
|u(y,s)- & P(y,s)| \\
&  \leq |\kappa| \left(\tfrac{7}{6} |y-x_0|^3+  \tfrac{\sqrt{2}}{3} |s-t_0|^{3/2} \right) 
 +  C_2 \left(1+\left\|g \right\|_{C^{0,1}(Q_1)} \right) |\kappa| |t_0 - s|^{3/2} \\
 &   \leq \left(C_2 \left(1+\left\|g \right\|_{C^{0,1}(Q_1)} \right) +\tfrac{7}{6}  \right) |\kappa| \left(|y-x_0|^3 + |s-t_0|^{3/2} \right). %
\end{align*}
By setting $C_1:=C_2+\tfrac{7}{6}$, this implies 
$\Psi(u,Q_1)(x_0,t_0)\leq C_1 \left(1+\left\|g \right\|_{C^{0,1}(Q_1)} \right) |\kappa|$. \qedhere
\end{step}
\end{proof}
Now we can give the proof of the parabolic $W^{3,\eps}$ estimate stated in the introduction. 
\begin{proof}[Proof of Theorem~\ref{W3_eps_para}] 
If $u \equiv 0$ on $Q_1$, then the estimate is clear, so we may assume that $\sup_{Q_1}|u|>0$.
In Step~\ref{step1_th2}, we reduce the proof of the theorem to the case of $\sup_{Q_1}|u|\leq 1$, $g(0,0)=0$ 
and $F(0)=0$ by scaling arguments. 
In Step~\ref{step2_th2}, we prove the theorem under these assumptions. 
 \setcounter{step}{0}
\begin{step}\label{step1_th2}
We first reduce to the case that $g(0,0)=0$. If $g(0,0) \neq 0$, 
define $ \ol g(x,t):=g(x,t) - g(0,0)$ and $\ol u(x,t):= u(x,t) - t g(0,0)$ is a solution of  
\begin{equation*}
 \partial_t \ol u + F(D^2 \ol u)= \ol g.
\end{equation*}
By direct computation, 
\begin{equation*}
\sup_{Q_1} |\ol u| \leq \sup_{Q_1}|u| + \sup_{-1\leq t \leq 0} |t| |g(0,0)|    \leq \sup_{Q_1}|u| + |g(0,0)|.
\end{equation*}
Then we reduce to the case that $F(0)=0$. If $F(0) \neq 0$, then, by ellipticity, there exists $a\in \R$ such that 
\begin{equation}\label{est_F_a}
F(aI_d)=0 \quad \text{with } |a| \leq \frac{1}{\lambda d}|F(0)|. 
\end{equation}
Define the operator $ \widehat F(M):=F(M+a I_d)$ and observe that $\widehat F$ satisfies \ref{f1} with the same ellipticity constants 
$\lambda, \Lambda$ and $ \widehat F(0)= F(a I_d)=0$. It is clear that $\widehat u(x,t):= u(x,t) - \tfrac{1}{2}a |x|^2$ is a solution of  
\begin{equation*}
 \partial_t \widehat u + \widehat F(D^2 \hat u)=0
\end{equation*}
By direct computation, 
\begin{equation*}
\sup_{Q_1} |\hat u| \leq \sup_{Q_1}|u| + \tfrac{1}{2}|a| \sup_{x\in B_1} |x|^2  
                    \leq \sup_{Q_1}|u| + \tfrac{1}{2}|a|.
\end{equation*}
By applying \eqref{est_w3_scale_with_sup} and using \eqref{est_F_a}, we obtain
\begin{equation*}
 \left| \left\{ (x,t)\in Q_{1/2}\left(0,-\tfrac{1}{4}\right) : \Psi( u,Q_{3/4})(x,t)>  \kappa \right\} \right| 
 \leq C \left( \frac{\kappa}{\sup_{Q_1}|u| + \frac{1}{2\lambda d}|F(0)|} \right)^{-\eps}, 
\end{equation*}
and we get the inequality given by Theorem~\ref{W3_eps_para}.

Next we reduce to the case that $\sup_{Q_1}|u|\leq 1$. 
Assume that we have shown if $\sup_{Q_1}|u|\leq 1$ and $F(0)=0$, then for all $\kappa>0$, 
\begin{equation}\label{est_w3_scale}
 \left| \left\{ (x,t)\in Q_{1/2}\left(0,-\tfrac{1}{4}\right) : \Psi(u,Q_{3/4})(x,t)> \kappa \right\} \right|
 \leq C \left( \frac{\kappa}{1+\left\|g \right\|_{C^{0,1}(Q_1)}} \right)^{-\eps}. 
\end{equation}
We claim that if $F(0)=0$, and $\s:=\sup_{Q_1}|u|>0$ then for all $\kappa>0$, 
\begin{equation}\label{est_w3_scale_with_sup}
 \left| \left\{ (x,t)\in Q_{1/2}\left(0,-\tfrac{1}{4}\right) : \Psi( u,Q_{3/4})(x,t)>  \kappa \right\} \right| 
 \leq C \left( \frac{\kappa}{\s +\left\|g \right\|_{C^{0,1}(Q_1)}} \right)^{-\eps}. 
\end{equation}
Define the function $\tilde g:= (1/\s) g$, the operator $\widetilde F(M):= \s^{-1}F(\s M)$ and observe 
 $\tilde g\in C^{0,1}(Q_1)$ with  $\left\|\tilde g \right\|_{C^{0,1}(Q_1)} = \left\|g \right\|_{C^{0,1}(Q_1)}/\s$,
$\widetilde F$ satisfies \ref{f1} with the same ellipticity constants 
$\lambda, \Lambda$ and $\widetilde F(0)= \s^{-1}F(0)=0$. It is clear that $\tilde u:=u/\s$ is a solution of  
\begin{equation*}
 \partial_t \tilde u +\widetilde F(D^2\tilde u)=\tilde g  
 \end{equation*}
with $\sup_{Q_1} |\tilde u|=1$. By applying \eqref{est_w3_scale} to $\tilde u$, we obtain that, for all $\kappa>0$,
\begin{equation*}
 \left| \left\{ (x,t)\in Q_{1/2}\left(0,-\tfrac{1}{4}\right) : \Psi(\tilde u,Q_{3/4})(x,t)> \kappa \right\} \right| 
 \leq C \left( \frac{\kappa}{1+\left\|g \right\|_{C^{0,1}(Q_1)}/\s} \right)^{-\eps}. 
\end{equation*}
Noticing that $\Psi(\tilde u,Q_{3/4})(x,t)=\tfrac{1}{\beta}  \Psi(u ,Q_{3/4})(x,t)$, we obtain that, for all $\kappa>0$, 
\begin{equation*}
 \left| \left\{ (x,t)\in Q_{1/2}\left(0,-\tfrac{1}{4}\right) : \Psi( u,Q_{3/4})(x,t)> \s \kappa \right\} \right| 
 \leq C \left( \frac{\kappa}{1+\left\|g \right\|_{C^{0,1}(Q_1)}/\s} \right)^{-\eps}. 
\end{equation*}
This is equivalent to~\eqref{est_w3_scale_with_sup}. 
\end{step}

\begin{step} \label{step2_th2}
Assuming that $\sup_{Q_1}|u| \leq 1$, $g(0,0)=0$ and $F(0)=0$, we give the proof of~\eqref{est_w3_scale}. 
It suffices to get the inequality for $\kappa\geq \kappa_1$, where $\kappa_1$ is a universal constant. 
According to Proposition~\ref{int_reg_parabolic}, $Du$ is continuous and there exists a constant $C$ such that
\begin{equation} \label{est_inter_blackbox}
 \sup_{Q_{3/4}}|Du| \leq C \left(1+\left\|g \right\|_{C^{0,1}(Q_1)} \right). 
\end{equation}
Moreover, we claim that for every unit direction $e\in \R^d$, $|e|=1$, the function $u_e=e \cdot Du$ satisfies the inequalities
\begin{equation*}
\partial_t u_e+\mathcal{P}^-_{\lambda,\Lambda}(u_e) -\left\|g \right\|_{C^{0,1}(Q_1)}
\leq 0 \leq \partial_t u_e+\mathcal{P}^+_{\lambda,\Lambda}(u_e) +\left\|g \right\|_{C^{0,1}(Q_1)},  \quad \text{ in }Q_1,
\end{equation*}
 in the viscosity sense. 
We refer to~\cite[Lemma~3.12]{caff_cabre} for the elliptic version of this statement which is easy to generalize to the parabolic setting. \\
 According to Proposition~\ref{W_2eps_para}, we have, for each $\kappa>0$, 
\begin{equation*} 
 |\{  (x,t) \in Q_{1/2}\left(0, -\tfrac{1}{4} \right) : \udl \Theta(u_e,Q_{3/4}) (x,t)>\kappa \}| 
 \leq C \left(\frac{\kappa}{\sup_{Q_{3/4}} |u_e| +\left\|g \right\|_{C^{0,1}(Q_1)}} \right)^{-\eps}, 
\end{equation*}
where $C, \eps>0$ are universal constants. 
Thus, we deduce from~\eqref{est_inter_blackbox} that there exists a new universal constant~$C>0$ such that for all $\kappa>0$, 
\begin{equation} \label{w2eps_space_derivative}
 |\{  (x,t) \in Q_{1/2}\left(0, -\tfrac{1}{4} \right): \udl \Theta(u_e,Q_{3/4}) (x,t)>\kappa \}| 
 \leq C \left( \frac{\kappa}{1+\left\|g \right\|_{C^{0,1}(Q_1)}} \right)^{-\eps}.
\end{equation}
By Proposition~\ref{key_prop_para_ref}, there exists a universal constant $C_1$ such that, 
for all $\kappa> C_1 \left(1+\left\|g \right\|_{C^{0,1}(Q_1)} \right)$, 
\begin{multline*}
\left| \left\{ (x,t)\in Q_{1/2}\left(0, -\tfrac{1}{4} \right) : \Psi(u,Q_{3/4}) (x,t)>\kappa \right\} \right|  \\
\leq \sum_{i=1}^d \left|\left\{ (x,t)\in Q_{1/2}\left(0,-\tfrac{1}{4} \right):
\Theta (u_{x_i},Q_{3/4})(x,t)>  \frac{\kappa}{ C_1 \left(1+\left\|g \right\|_{C^{0,1}(Q_1)} \right) \sqrt{d}}  \right\} \right|, 
\end{multline*}
and we obtain the desired result for $\kappa>C_1 \left(1+\left\|g \right\|_{C^{0,1}(Q_1)}\right)$ by applying~\eqref{w2eps_space_derivative}. 
This completes the proof of~\eqref{est_w3_scale}.\qedhere
\end{step}
\end{proof}

\bibliography{ref_partial_reg}\bibliographystyle{plain}

\end{document}